\newcommand{\one}{\mathds{1}}
\newcommand{\A}{{\mathcal A}}
\renewcommand{\bar}[1]{{\overline{#1}}}
\newcommand{\F}{{\mathcal F}}
\renewcommand{\P}{{\mathbb P}}
\newcommand{\X}{X}
\newcommand{\R}{\mathbb{R}}
\newcommand{\E}{\mathbb{E}}
\newcommand{\eps}{\varepsilon}
\renewcommand{\epsilon}{\varepsilon}
\renewcommand{\tilde}[1]{\widetilde{#1}}
\renewcommand{\phi}{\varphi}
\DeclareMathOperator*{\argmin}{argmin}
\DeclareMathOperator{\diam}{diam}
\DeclareMathOperator{\Lip}{Lip}
\DeclareMathOperator{\dist}{dist}
\def\XXint#1#2#3{{\setbox0=\hbox{$#1{#2#3}{\int}$ }
		\vcenter{\hbox{$#2#3$ }}\kern-.6\wd0}}
\newcommand{\cA}{\mathcal{A}}
\newcommand{\cH}{\mathcal{H}}
\newcommand{\cO}{\mathcal{O}}
\newcommand{\cS}{\mathcal{S}}
\newcommand{\cX}{X}
\newcommand{\bP}{\mathbb{P}}
\newcommand{\gx}{\nabla_\X}
\newcommand{\lx}{F(\X)}
\newtheorem{theorem}{Theorem}
\newtheorem{lemma}[theorem]{Lemma}
\newtheorem{proposition}[theorem]{Proposition}
\theoremstyle{definition}
\newtheorem{remark}[theorem]{Remark}
\newtheorem{definition}[theorem]{Definition}
\numberwithin{equation}{section}
\numberwithin{theorem}{section}
\title{Hamilton-Jacobi equations on graphs with applications to semi-supervised learning and data depth\thanks{{\bf Source Code:} \url{https://github.com/jwcalder/peikonal}}}
\author{Jeff Calder\thanks{School of Mathematics, University of Minnesota. \href{mailto:jcalder@umn.edu}{jcalder@umn.edu}} 
\and 
Mahmood Ettehad\thanks{Institute for Mathematics and its Applications (IMA), University of Minnesota. \href{mailto:mettehad@umn.edu}{etteh001@umn.edu}}}
\begin{document}
\maketitle

\begin{abstract}
Shortest path graph distances are widely used in data science and machine learning, since they can approximate the underlying geodesic distance on the data manifold. However, the shortest path distance is highly sensitive to the addition of corrupted edges in the graph, either through noise or an adversarial perturbation. In this paper we study a family of Hamilton-Jacobi equations on graphs that we call the $p$-eikonal equation. We show that the $p$-eikonal equation with $p=1$ is a provably robust distance-type function on a graph, and the $p\to \infty$ limit recovers shortest path distances. While the $p$-eikonal equation does not correspond to a shortest-path graph distance, we nonetheless show that the continuum limit of the $p$-eikonal equation on a random geometric graph recovers a geodesic density weighted distance in the continuum. We consider applications of the $p$-eikonal equation to data depth and semi-supervised learning, and use the continuum limit to prove asymptotic consistency results for both applications. Finally, we show the results of experiments with data depth and semi-supervised learning on real image datasets, including MNIST, FashionMNIST and CIFAR-10, which show that the $p$-eikonal equation offers significantly better results compared to shortest path distances.
\end{abstract}

{\bf Keywords:}  Data depth, Graph learning, Hamilton-Jacobi equation, Robust statistics, Semi-supervised learning, viscosity solutions, discrete to continuum limits, partial differential equations

\section{Introduction}

Shortest path distances on graphs have found applications in many areas of data science and machine learning, including dimensionality reduction (e.g., the ISOMAP algorithm \cite{tenenbaum2000global}), semi-supervised learning on graphs \cite{moscovichfast,chapelle2005semi,bijral2012semi,rozza2014novel,yang2021spagan}, graph classification \cite{borgwardt2005shortest}, and data depth \cite{molina2021tukey,molina2022eikonal,calder2021boundary}.  In many applications, the shortest paths are density weighted, to make path lengths shorter in high density regions of the graph, and longer in sparse regions \cite{bijral2012semi}. Shortest path algorithms offer different information compared to second order methods based on graph Laplacians, like spectral clustering \cite{ng2002spectral},  Laplacian eigenmaps \cite{belkin2003laplacian}, diffusion maps \cite{coifman2006diffusion}, or Laplacian based semi-supervised learning \cite{zhu2003semi,calder2020poisson}, which offer information about average or typical paths through graphs.

However, a main drawback of shortest path distances is their lack of robustness to perturbations in graph structure. The addition of a single edge can have a strong effect on the shortest path, while simultaneously having little or no effect on the average or typical path, which gives an intuitive reason for the apparent superiority of graph Laplacian based methods for semi-supervised learning and dimension reduction, among other problems.

In this paper, we approach the problem of robustly computing distance functions on graphs from the viewpoint of Hamilton-Jacobi equations. We study a family of Hamilton-Jacobi equations on graphs, which we call the $p$-eikonal equations, that are provably robust to graph perturbations, especially for $p=1$. The equations have the form
\begin{equation}\label{eq:peikonal_intro}
\sum_{j=1}^n w_{ji}(u(x_i) - u(x_j))_+^p = f(x_i),
\end{equation}
where $a_+=\max\{a,0\}$, and $w_{ij}$ is the weight between nodes $i$ and $j$ in the graph.  We prove that as $p\to \infty$, these $p$-eikonal equations recover shortest path graph distances, while for $p=1$ the solutions provide information that is different from shortest paths and far more robust to graph perturbations.  The solution of the $p$-eikonal equation can be computed in similar time to shortest path distances, using a slight variation on the fast marching method \cite{sethian1996fast}. 

While the $p$-eikonal equations do not describe shortest path graph distances, we prove rigorously that the continuum limit of the $p$-eikonal equation, as the number of data points tends to infinity while $p$ is fixed, is exactly a density weighted geodesic distance function on the underlying space (either a Euclidean domain or data manifold). Hence, the $p$-eikonal equation offers a robust estimation of geodesic distances in the continuum for any finite value of $p$. Our techniques for proving discrete to continuum convergence are quite different from existing spectral convergence results for graph Laplacians (see, e.g., \cite{calder2020Lip,calder2019improved,garcia2020error}). We use the viscosity solution machinery and the maximum principle, as in \cite{yuan2020continuum,flores2022algorithms}. Our theory is also quite different from previous work on continuum limits for shortest path distances (see, e.g., \cite{alamgir2012shortest,bungert2021uniform,hwang2016shortest,calder2021boundary}) which crucially use the shortest path interpretation on the graph. 

To illustrate the robustness of the $p$-eikonal equation, we consider applications of density weighted graph distances to data depth and semi-supervised learning. For data depth we use an approach similar to geometric medians on Riemannian manifolds \cite{fletcher2009geometric}.  For semi-supervised learning we use a nearest neighbor classifier via the $p$-eikonal distance. In both applications we consider \emph{density-weighted} distances, for which path lengths are shorter in high density regions of the graph and longer in sparse regions. This improves accuracy in semi-supervised learning and encourages the median to be placed in a high density region of the graph in data depth problems, making the methods more robust to outliers. We test the methods on both toy and real datasets, including semi-supervised learning on MNIST, FashionMNIST and CIFAR-10. The classification results for the $p$-eikonal equation are uniformly better than shortest path graph distances, which we attribute to the robustness properties of the $p$-eikonal equation to spurious corrupted edges in real world graphs.

Using our continuum limit results, we go on to prove that $p$-eikonal based data depth and semi-supervised learning are asymptotically consistent. In particular, for semi-supervised learning, we take a clusterability assumption for the data and show that $p$-eikonal semi-supervised learning with arbitrarily few labels can recover the true labels for each cluster. The proofs of asymptotic consistency are particularly simple for graph distances, compared to the analogous results for graph Laplacian based techniques (see, e.g., \cite{hoffmann2022spectral}).  We also examine the role of class priors in semi-supervised learning, and show how utilizing information about the relative sizes of each class improves the asymptotic consistency results by allowing a weaker clusterability assumption. We enforce class priors by using a weighted minimum in the label decision, as was done in the volume label projection in \cite{calder2020poisson}.

There is a considerably amount of related work in both data depth and semi-supervsied learning. The problem of data depth, and in general, the ordering of multivariate data, is a common problem in statistics \cite{barnett1976ordering,liu1999multivariate}.  The Tukey halfspace depth \cite{tukey1975mathematics} is one of the oldest and most well-studied notions of depths, and it has been extended to graphs \cite{small1997multidimensional} and metric spaces \cite{carrizosa1996characterization}. The Tukey depth has been connected, at the continuum population level, to the solution of a non-standard Hamilton-Jacobi equation \cite{molina2021tukey}. Other interesting notions of data depth include the Monge-Kantorovich depth \cite{chernozhukov2017monge}, and notions of depth for curves \cite{de2020depth}. Another way to define data depth is by repeatedly peeling away extremal points. Several related algorithms, including convex hull peeling, nondominated sorting, and Pareto envelope peeling, have been recently connected to viscosity solutions of partial differential equations (PDEs) in the continuum limit \cite{calder2020convex,calder2014,calder2015pde,calder2015a,calder2015numerical,bou2021hamilton,cook2020nondom}. 

We were recently made aware of another paper \cite{molina2022eikonal} that was developed in parallel with ours, and proposes to use the eikonal equation for data depth. The method in \cite{molina2022eikonal} requires identifying boundary points first, and then the depth is defined as the length of a shortest density-weighted path back to the boundary. This approach, without density weighting, was also used in \cite{calder2021boundary}, in combination with a method for detecting boundary points. Our approach to data depth based on the geometric median framework is much different than these works, and in particular, it does not require the identification of boundary points to compute depth. We also mention that the $p$-eikonal equation \eqref{eq:peikonal_intro} has been used previously for image segmentation and data classification on graphs \cite{desquesnes2013eikonal,desquesnes2017nonmonotonic}. Our work provides a rigorous foundation for these applications.

Finally, let us mention that the problem of semi-supervised learning at low label rates has received a significant amount of attention recently, since it was pointed out in \cite{nadler2009semi} that Laplace learning (or label propagation)  \cite{zhu2003semi} is ill-posed with very few labels. Many graph-based semi-supervised learning algorithms have been proposed recently at low label rates, including higher-order Laplacians \cite{zhou2011semi}, $p$-Laplacian methods \cite{el2016asymptotic,kyng2015algorithms,slepcev2019analysis,calder2018game,calder2019lip,flores2022algorithms}, reweighted Laplacians \cite{shi2017weighted,calder2019properly}, the centered-kernel method \cite{mai2018random,mai2018randomJMLR}, volume constrained MBO \cite{jacobs2018auction}, and Poisson learning \cite{calder2020poisson}, and the low label rate issue has been studied theoretically in \cite{calder2020rates}. The only methods that are provably well-posed at arbitrarily low label rates are the $p$-Laplacian methods \cite{calder2018game,slepcev2019analysis} for $p>d$\footnote{Here, $d$ is the intrinsic dimension of the data.} and the Properly Weighted Laplacian \cite{calder2019properly}, but neither has been shown to be asymptotically consistent at low label rates. In contrast, our results show that $p$-eikonal based semi-supervised learning gives well-posed, stable and informative classification results, and is asymptotically consistent, at arbitrarily low label rates and for any $p\geq 1$.

\subsection{Outline}

This paper is organized as follows. In Section \ref{sec:discrete} we study Hamilton-Jacobi equations on graphs, and introduce the $p$-eikonal equation. We establish our main robustness result, and then consider applications to data depth and semi-supervised learning. In Section \ref{sec:continuum_eikonal} we introduce the continuum geodesic distances, and review the connection to state constrained eikonal equations. In Section \ref{sec:convergence} we prove our main discrete to continuum convergence result, showing that the $p$-eikonal equations recover geodesic density weighted distances in the continuum limit, for any value of $p\geq 1$. In Section \ref{sec:analysis} we use the continuum limit theory to study the asymptotic consistency of data depth and semi-supervised learning with the $p$-eikonal equation. Finally, in Section \ref{sec:numerics} we show the results of experiments with real data.

\section{First order equations on graphs}
\label{sec:discrete}

In this section we study the general theory of first order equations on graphs (Section \ref{sec:general}), and then review the graph distance function (Section \ref{sec:graph_distance}) and then introduce our $p$-eikonal equation in Section \ref{sec:peikonal}, where we discuss robustness properties and computational complexity. Then in Section \ref{sec:discrete_app} we consider applications of the graph $p$-eikonal equation to data depth and semi-supervised learning. We give some toy examples in Section \ref{sec:discrete_app}, and postpone experiments with real data to Section \ref{sec:numerics}. 

Let us first introduce some notation. Let $G=(\X,W)$ be a weighted graph with vertices $\X=\{x_1,\dots,x_n\}\subset \R^d$ and nonnegative edge weights $W=(w_{ij})_{i,j=1}^n$. The edge weights encode similarity between data points, with $w_{ij}\gg 0$ indicating $x_i$ and $x_j$ are similar, and $w_{ij}\approx 0$ indicating dissimilarity. We do \emph{not} assume the weight matrix is symmetric, so in general we have $w_{ij}\neq w_{ji}$. This includes graphs such as $k$-nearest neighbor graphs. For first order equations, symmetry is not a main concern, since we do not require any operators to be self-adjoint, as in the case of graph Laplacians. Any zero edge weight $w_{ij}=0$ indicates the absence of an edge from $i$ to $j$. 
  We also let $\lx$ denote the vector space of functions $u:\X\to \R$, and let $I_n = \{1,\dots,n\}$ denote the indices of the graph vertices.    For a function $u\in \lx$ and a vertex $x_i \in \X$, we define the gradient $\gx u(x_i)\in \R^n$ by
\begin{equation}\label{eq:gradient}
	\gx u(x_i) = (u(x_i)-u(x_1),u(x_i)-u(x_2),\dots,u(x_i)-u(x_n)).
\end{equation}
For convenience, we will write $\gx^j u(x_i)=u(x_i)-u(x_j)$, so that
\[\gx u(x_i) = (\gx^1 u(x_i),\gx^2 u(x_i),\dots,\gx^n u(x_i)).\]
Finally, throughout this section, we let $K$ denote the unweighted maximum incoming degree of the graph, that is 
\begin{equation}\label{eq:unweighted_degree}
K = \max_{1\leq i\leq n}\sum_{j=1}^n \one_{w_{ji}>0}.
\end{equation}


\subsection{General theory}
\label{sec:general}

We begin by developing a general theory for first order equations on graphs, and give general  existence and uniqueness results. Letting $\Gamma\subset \X$ denote a set of boundary or terminal nodes, a general graph PDE has the form
\begin{equation}\label{eq:genPDE}
	\left\{\begin{aligned}
		H(\gx u(x_i),u(x_i),x_i) &= 0,&&\text{if }x_i\in \X\setminus \Gamma\\ 
		u(x_i) &=g(x_i),&&\text{if }x_i\in \Gamma,
	\end{aligned}\right.
\end{equation}
where $g:\Gamma\to \R$ are some prescribed boundary values. The Hamiltonian $H$ is a function
\begin{equation}\label{eq:hamiltonian}
	H:\R^n\times \R \times \X \to \R,
\end{equation}
that also implicitly depends on the weight matrix $W$, which encodes the graph structure. It is also possible to pose a graph PDE on all of $\X$ with no boundary conditions, in the form
\begin{equation}\label{eq:genPDEnoBC}
	H(\gx u(x_i),u(x_i),x_i) = 0 \ \ \text{ for all }x_i\in \X.
\end{equation}
We will write $H=H(q,z,x_i)$ in general, for $q\in \R^n, z\in \R,x_i\in \X$. While we will focus on first order equations (in the sense that their continuum limits are first order PDEs), we note that this formulation of graph PDEs is very general, and contains as a subset the graph Laplacian by setting
\begin{equation}\label{eq:graphLap}
H(q,z,x_i) = \sum_{j=1}^nw_{ij}q_j.
\end{equation}

In this section, we establish existence and uniqueness of solutions to the graph PDE \eqref{eq:genPDE}. Some of this analysis is similar to previous work studying PDEs on graphs, see for instance \cite{manfredi2015nonlinear,calder2018game,calder2019lip}. Our arguments are slightly different, and cover more general cases. 

Existence and uniqueness of solutions to \eqref{eq:genPDE} is based on a comparison principle, which allows us to compare the values of a subsolution $u$ to a supersolution $v$, based on comparing their values on the boundary $\Gamma$. A subsolution $u\in \lx$ of \eqref{eq:genPDE} satisfies
\begin{equation}\label{eq:subsol}
	H(\nabla_\X u(x_i),u(x_i),x_i) \leq 0 \ \ \ \text{ for all }x_i\in \X\setminus \Gamma,
\end{equation}
while a supersolution $v\in \lx$ of \eqref{eq:genPDE} satisfies
\begin{equation}\label{eq:supersol}
	H(\nabla_\X v(x_i),v(x_i),x_i) \geq 0 \ \ \ \text{ for all }x_i\in \X\setminus \Gamma.
\end{equation}
Throughout this section, $\Gamma\subset \X$ is fixed, and may be empty.
\begin{definition}\label{def:comp}
	We say that $H$ \emph{admits comparison} if for all $u\in \lx$ satisfying \eqref{eq:subsol} and $v\in \lx$ satisfying \eqref{eq:supersol}, if $u\leq v$ on $\Gamma$ then $u\leq v$ on $\X$.
\end{definition}

In this section, we establish conditions under which $H$ admits comparison. An important class of PDEs are those which are \emph{monotone}. For vectors $p,q\in \R^n$, we write $p\leq q$ if $p_i\leq q_i$ for all $i$.
\begin{definition}\label{def:monotone}
	We say $H$ is \emph{monotone} if 
	\begin{equation}\label{eq:monotone}
		p\leq q \text{ and }s \leq t \implies H(p,s,x) \leq H(q,t,x)
	\end{equation}
	for all $x\in \X$.
\end{definition}
This definition of monotonicity is related to upwind discretizations of Hamilton-Jacobi equations, and monotone discetizations of second order equations \cite{sethian1996fast,oberman2006convergent}. As an example, the graph Laplacian \eqref{eq:graphLap} is clearly monotone, since $w_{ij}\geq 0$.

Monotonicity allows us to apply maximum principle arguments to prove a comparison principle, which is based on the following observation.
\begin{proposition}\label{prop:monotone}
	Assume $H$ is monotone and let $u,v\in \lx$. If $u-v$ attains its maximum over $\X$ at $x_i\in \X$ and $u(x_i)\geq v(x_i)$, then 
	\[H(\nabla_\X u(x_i),u(x_i),x_i) \geq  H(\nabla_\X v(x_i),v(x_i),x_i).\]
\end{proposition}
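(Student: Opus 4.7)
The plan is a direct, one-step argument that uses the definition of the gradient in \eqref{eq:gradient} together with the fact that $x_i$ maximizes $u-v$. First, I would unpack what it means for $u-v$ to attain its maximum at $x_i$: for every $j\in I_n$,
\[u(x_i)-v(x_i)\geq u(x_j)-v(x_j),\]
which I rewrite as $u(x_i)-u(x_j)\geq v(x_i)-v(x_j)$. In the notation of the paper this is exactly $\gx^j u(x_i)\geq \gx^j v(x_i)$ for every coordinate $j$, hence the full gradient inequality $\gx u(x_i)\geq \gx v(x_i)$ holds componentwise in $\R^n$.

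Next, I would combine this with the hypothesis $u(x_i)\geq v(x_i)$ and feed both inequalities into the monotonicity property from Definition \ref{def:monotone} with $p=\gx v(x_i)$, $q=\gx u(x_i)$, $s=v(x_i)$, $t=u(x_i)$, at the point $x=x_i$. Monotonicity then immediately yields
\[H(\gx v(x_i),v(x_i),x_i)\leq H(\gx u(x_i),u(x_i),x_i),\]
which is the claim.

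There is essentially no obstacle here; the result is a packaging lemma that encodes the standard maximum-principle intuition in the graph setting. The only thing worth being careful about is the sign convention in \eqref{eq:gradient}: the $j$th component is $u(x_i)-u(x_j)$ (not $u(x_j)-u(x_i)$), so that maximization of $u-v$ at $x_i$ translates into $\gx u(x_i)\geq \gx v(x_i)$ with the right direction of inequality to match the monotonicity hypothesis \eqref{eq:monotone}. Once that is checked, the conclusion is immediate.
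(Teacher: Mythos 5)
Your proof is correct and follows exactly the same route as the paper's: translate the maximality of $u-v$ at $x_i$ into the componentwise inequality $\nabla_\X u(x_i)\geq \nabla_\X v(x_i)$, then apply monotonicity of $H$ together with $u(x_i)\geq v(x_i)$. Nothing is missing, and your remark about the sign convention in \eqref{eq:gradient} is the only subtlety worth noting.
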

\begin{proof}
	We simply note that $u(x_j) - v(x_j) \leq u(x_i) - v(x_i)$ for all $j$, which implies that
	\[u(x_i) - u(x_j) \geq v(x_i) - v(x_j) \ \ \text{ for all }j,\]
	and so $\nabla_\X u(x_i) \geq \nabla_\X v(x_i)$.  The result now follows from monotonicity of $H$
\end{proof}

We can immediately prove a comparison principle when $H$ is monotone, and one of the sub or supersolutions is strict.
\begin{theorem}\label{thm:strict_comp}
	Assume $H$ is monotone. Let $u,v\in \lx$ such that
	\begin{equation}\label{eq:strict_sub}
		H(\nabla_\X u(x_i),u(x_i),x_i)  < H(\nabla_\X v(x_i),v(x_i),x_i) \ \ \ \text{ for all }x_i\in \X\setminus \Gamma,
	\end{equation}
	and $u\leq v$ on $\Gamma$. Then $u\leq v$ on $\X$.
\end{theorem}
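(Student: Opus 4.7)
My plan is to argue by contradiction, using Proposition \ref{prop:monotone} at the point where $u-v$ is maximized.

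First I would suppose, for contradiction, that the conclusion fails, so that $\max_{\X}(u-v) > 0$. Since $\X$ is a finite set, the maximum is attained at some vertex $x_i \in \X$. I would then argue that $x_i$ cannot lie in $\Gamma$: by hypothesis $u \leq v$ on $\Gamma$, so if the max were attained on $\Gamma$ it would be at most zero, contradicting positivity. Hence $x_i \in \X \setminus \Gamma$, and moreover $u(x_i) > v(x_i)$ (so in particular $u(x_i)\geq v(x_i)$).

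Next I would apply Proposition \ref{prop:monotone} at this maximizer $x_i$. The proposition directly yields
\[
H(\gx u(x_i),u(x_i),x_i) \;\geq\; H(\gx v(x_i),v(x_i),x_i).
\]
This contradicts the strict inequality \eqref{eq:strict_sub} that is assumed to hold at every $x_i \in \X \setminus \Gamma$, closing the argument.

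The proof is essentially a one-line consequence of Proposition \ref{prop:monotone} once the maximizer is located off the boundary, so there is no real obstacle: finiteness of $\X$ supplies the maximizer for free (no compactness needed), and the only subtlety is the sign bookkeeping that forces the maximizer into $\X \setminus \Gamma$. I do not expect to need any additional hypotheses on $H$ beyond monotonicity, and in particular no continuity or coercivity of $H$ in $q$ or $z$.
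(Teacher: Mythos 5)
Your proof is correct and is essentially the paper's argument: locate a maximizer of $u-v$, note it must lie in $\X\setminus\Gamma$ if the maximum were positive, and apply Proposition \ref{prop:monotone} to contradict the strict inequality \eqref{eq:strict_sub}. The paper phrases this directly (showing $u(x_i)<v(x_i)$ at an interior maximizer) rather than by contradiction, but the mechanism is identical and your observation that only monotonicity of $H$ is needed matches the paper.
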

\begin{proof}
	Let $x_i\in \X$ be a point at which $u-v$ attains its maximum over $\X$. If $x_i\in \X\setminus \Gamma$, then by Proposition \ref{prop:monotone} and the assumption \eqref{eq:strict_sub}, we find that $u(x_i) < v(x_i)$. If $x_i\in \Gamma$, then $u(x_i)\leq v(x_i)$ by assumption, which completes the proof.
\end{proof}
The comparison principle in Theorem \ref{thm:strict_comp} requires that $u$ be a strict subsolution relative to $v$. The strategy to prove a true comparison principle (i.e., without the strictness, as in Definition \ref{def:comp}) will be to make small perturbations of subsolutions (or supersolutions) to obtain the strictness required in Theorem \ref{thm:strict_comp}. This requires that we place further assumptions on $H$.

\begin{definition}\label{def:strictmonotone}
	We say $H$ is \emph{proper} if there exists a strictly increasing function $\gamma:[0,\infty) \to [0,\infty)$ with $\gamma(0)=0$ such that when $t\geq s$ we have
	\begin{equation}\label{eq:proper}
		H(q,t,x) \geq H(q,s,x) + \gamma(t-s)
	\end{equation}
	for all $x\in \X$ and $q\in \R^n$.
\end{definition}
An example of an equation that is proper is one with a positive zeroth order term, of the form
\[H(q,z,x) = \lambda z + G(q,x),\]
where $\lambda>0$ and $G:\R^n\times \X\to \R$. In this case, $\gamma(t) = \lambda t$.

We now establish several situations where comparison holds.

\begin{lemma}\label{lem:comparison}
Assume $H$ is monotone. Then $H$ admits comparison if any of the following hold.
\begin{enumerate}[{\rm (i)}]
\item $H$ is proper.
\item $H=H(q,x)$, $q\mapsto H(q,x)$ is convex, and there exists $\phi\in \lx$ and $\lambda>0$ such that
\begin{equation}\label{eq:strictphi}
H(\nabla_\X \varphi(x_i), x_i) + \lambda \leq 0 \ \  \text{ for all } \ \ x_i\in \X\setminus \Gamma.
\end{equation}
\item $H(q,z,x)=G(q) - f(x)$, where $f>0$ on $\X$, and $G$ is positively $p$-homogeneous for $p>0$. 
\end{enumerate}
\end{lemma}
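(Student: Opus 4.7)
The plan is to apply Theorem \ref{thm:strict_comp} after perturbing either the subsolution $u$ or the supersolution $v$ to upgrade the weak inequalities \eqref{eq:subsol}--\eqref{eq:supersol} to the strict inequality \eqref{eq:strict_sub}, and then send the perturbation parameter $\epsilon \to 0^+$. Each of the three cases uses a different $\epsilon$-perturbation tailored to the available structure, but in each case one needs (a) the perturbed function to be a strict sub/supersolution with a gap that is uniform in $x_i \in \X \setminus \Gamma$, (b) the boundary ordering on $\Gamma$ to be preserved, and (c) the perturbation to vanish as $\epsilon \to 0^+$ so that the original ordering on $\X$ is recovered in the limit.

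For \textbf{(i)}, I would take $u_\epsilon = u - \epsilon$, which has the same gradient as $u$; properness then gives
\[ H(\nabla_\X u_\epsilon(x_i), u_\epsilon(x_i), x_i) \leq H(\nabla_\X u(x_i), u(x_i), x_i) - \gamma(\epsilon) \leq -\gamma(\epsilon) < 0 \leq H(\nabla_\X v(x_i), v(x_i), x_i) \]
for $x_i \in \X \setminus \Gamma$, while $u_\epsilon \leq u \leq v$ on $\Gamma$. For \textbf{(iii)}, I would perturb the supersolution instead by setting $v_\epsilon = (1+\epsilon) v + C_\epsilon$ with $C_\epsilon = \epsilon \max_\Gamma (-v)_+$, which ensures $v_\epsilon \geq v \geq u$ on $\Gamma$; positive $p$-homogeneity of $G$ and the supersolution inequality $G(\nabla_\X v) \geq f$ then yield
\[ H(\nabla_\X v_\epsilon(x_i), v_\epsilon(x_i), x_i) = (1+\epsilon)^p G(\nabla_\X v(x_i)) - f(x_i) \geq ((1+\epsilon)^p - 1)\, f(x_i) > 0, \]
where the strict inequality uses $p > 0$, $f > 0$, and the finiteness of $\X$. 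Both conclusions follow by invoking Theorem \ref{thm:strict_comp} and letting $\epsilon \to 0^+$.

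Case \textbf{(ii)} is the most delicate and is where I would spend the most care. I would combine $u$ with the auxiliary function $\phi$ of \eqref{eq:strictphi} via the convex combination $u_\epsilon = (1-\epsilon) u + \epsilon \phi - C_\epsilon$, with $C_\epsilon = \epsilon \max_\Gamma (\phi - u)_+$ chosen to enforce $u_\epsilon \leq v$ on $\Gamma$. Since $H$ here depends only on $q$ and subtracting a constant leaves the gradient unchanged, convexity of $q \mapsto H(q,x)$ together with \eqref{eq:strictphi} gives
\[ H(\nabla_\X u_\epsilon(x_i), x_i) \leq (1-\epsilon) H(\nabla_\X u(x_i), x_i) + \epsilon H(\nabla_\X \phi(x_i), x_i) \leq -\epsilon \lambda < 0 \leq H(\nabla_\X v(x_i), x_i), \]
and Theorem \ref{thm:strict_comp} closes the argument. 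The main obstacle across (ii) and (iii) is bookkeeping: the natural multiplicative or convex perturbation does not automatically respect the ordering on $\Gamma$, so one must introduce an additive correction of size $O(\epsilon)$ purely to restore the boundary condition. This correction must be chosen so as not to spoil the interior strict inequality, which is automatic here because in (ii) $H$ is independent of $z$ and in (iii) the additive constant does not alter the gradient.
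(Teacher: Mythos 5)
Your proposal is correct and follows essentially the same route as the paper: perturb to obtain the strict inequality required by Theorem \ref{thm:strict_comp}, invoke that theorem, and send $\epsilon\to 0$, with case (i) identical and case (ii) differing only in that you subtract an explicit boundary constant instead of shifting $\phi$ so that $\phi-u\leq 0$. The only substantive deviation is in (iii), where you scale the supersolution up by $(1+\epsilon)$ rather than scaling the subsolution down by $(1-\epsilon)$ as the paper does; this is a mirror image of the same homogeneity-plus-$f>0$ argument and works equally well (just note that if $\Gamma=\varnothing$ your constants $C_\epsilon$ should be read as $0$, the boundary condition then being vacuous).
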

\begin{proof}
Let $u$ satisfy \eqref{eq:subsol} and $v$ satisfy \eqref{eq:supersol}, and assume that $u\leq v$ on $\Gamma$. In each case we will produce a perturbation $u_\epsilon$ of $u$ satisfying $u_\epsilon\leq v$ on $\Gamma$, $H(\nabla_\X u_\epsilon,u,x)< 0$, and $u_\epsilon\to u$ as $\epsilon\to 0$. Then by Theorem \ref{thm:strict_comp} we have $u_\epsilon \leq v$ and sending $\epsilon\to 0$ completes the proof.

\hspace{1.2mm}(i) We set $u_\epsilon = u-\epsilon$ and use the fact that $H$ is proper to get the strict subsolution condition. 

(ii) We set $u_\eps = (1-\eps) u + \eps \phi$. We can shift $\phi$ by a constant, if necessary, so that $\phi-u \leq 0$, and so $u_\epsilon\leq u$. Since $q\mapsto H(q,z,x)$ is convex, we have
\begin{align*}
H(\nabla_\X u_\eps(x_i),x_i)&=H((1-\eps)\nabla_\X u + \eps\nabla_\X\varphi,x_i)\\
&\leq (1-\eps)H(\nabla_\X u(x_i),x_i) + \eps H(\nabla_\X \varphi(x_i),x_i) \leq -\lambda \epsilon,
\end{align*}
for all $x_i\in \X\setminus \Gamma$. 

(iii) Define $u_\eps = (1-\eps) u + \epsilon\min_{\X} u$. Then $u_\epsilon\leq u$.  Since $G$ is positively $p$-homogeneous we have $G(aq)=|a|^pG(q)$ for all $a\in \R$ and $q\in \R^n$, and so 
\[G(\nabla_\X u_\epsilon(x_i)) = G((1-\epsilon)\nabla_\X u(x_i)) = (1-\epsilon)G(\nabla_\X u(x_i)) \leq (1-\epsilon)f(x_i).\]
Hence, we have
\[G(\nabla_\X u_\epsilon(x_i)) - f(x_i) \leq -\epsilon f(x_i) < 0.\qedhere\]
\end{proof}

If $H$ admits comparison, then we can prove existence of a solution to \eqref{eq:genPDE} using the Perron method. We summarize this in the following result.
\begin{theorem}\label{thm:existence}
Assume $H$ is monotone, continuous in $p$ and $z$, and admits comparison. Assume there exists $\phi,\psi\in \lx$ such that $\psi\geq \phi= g$ on $\Gamma$ and for $x_i\in \X\setminus \Gamma$
\[H(\nabla_\X \phi(x_i),\phi(x_i),x_i) \leq 0 \ \ \text{ and }\ \ H(\nabla_\X \psi(x_i),\psi(x_i),x_i) \geq 0.\]
Then there exists a unique solution $u\in \lx$ of \eqref{eq:genPDE} and $\phi \leq u \leq \psi$.
\end{theorem}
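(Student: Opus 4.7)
The argument will follow the Perron method, adapted to the finite-graph setting. Uniqueness is immediate from the comparison principle: if $u_1,u_2\in\lx$ both solve \eqref{eq:genPDE}, then each is simultaneously a sub- and a supersolution, and $u_1=g=u_2$ on $\Gamma$, so comparison yields $u_1\le u_2$ and symmetrically $u_2\le u_1$.

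For existence I would set
\[\cS=\{v\in \lx : v \text{ is a subsolution of \eqref{eq:genPDE}},\ v\le \psi \text{ on }\X,\ v=g \text{ on }\Gamma\},\]
which is nonempty ($\phi\in\cS$), and define $u(x_i)=\sup_{v\in\cS}v(x_i)$ pointwise. Since $\cS$ is trapped between $\phi$ and $\psi$ and the subsolution/boundary conditions are closed under continuity of $H$, $\cS$ is compact in $\R^n$, so each pointwise supremum is attained. The first main step is to show $u\in\cS$. The key observation is that the pointwise maximum $w=\max(v^{(1)},v^{(2)})$ of two elements of $\cS$ remains in $\cS$: at any vertex $x_i$ with $w(x_i)=v^{(1)}(x_i)$, the inequality $w(x_j)\ge v^{(1)}(x_j)$ for every $j$ gives $\gx^j w(x_i)\le \gx^j v^{(1)}(x_i)$, so monotonicity of $H$ together with $v^{(1)}\in\cS$ yield $H(\gx w(x_i),w(x_i),x_i)\le 0$. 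By induction the maximum over any finite subset of $\cS$ is in $\cS$, and since $|\X|=n$ is finite, choosing $v^{(i)}\in\cS$ attaining $u(x_i)$ at each vertex and forming $w=\max_i v^{(i)}\in\cS$ gives $w\ge u$ pointwise while $w\le u$ by definition, so $u=w\in\cS$.

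Next I would verify the supersolution property by contradiction. Suppose $H(\gx u(x_{i_0}),u(x_{i_0}),x_{i_0})<0$ at some $x_{i_0}\in\X\setminus\Gamma$. If $u(x_{i_0})<\psi(x_{i_0})$, define the single-vertex perturbation $u_\eta(x_{i_0})=u(x_{i_0})+\eta$ and $u_\eta=u$ elsewhere, with $\eta>0$ small. A direct computation shows $\gx^j u_\eta(x_{i_0})=\gx^j u(x_{i_0})+\eta$ for $j\ne i_0$ (and $\gx^{i_0} u_\eta(x_{i_0})=0$), so continuity of $H$ in $(q,z)$ preserves the strict inequality at $x_{i_0}$ for $\eta$ small; at every other vertex $x_k$ only the $i_0$-th component of $\gx u_\eta(x_k)$ changes, decreasing by $\eta$, so monotonicity (combined with $u_\eta(x_k)=u(x_k)$) keeps $u_\eta$ a subsolution there. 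Provided $\eta$ is small enough that $u_\eta\le\psi$ still holds, we obtain $u_\eta\in\cS$ with $u_\eta(x_{i_0})>u(x_{i_0})$, contradicting maximality of $u$. In the remaining case $u(x_{i_0})=\psi(x_{i_0})$, the inequality $u\le\psi$ yields $\gx^j u(x_{i_0})=\psi(x_{i_0})-u(x_j)\ge\psi(x_{i_0})-\psi(x_j)=\gx^j\psi(x_{i_0})$, so monotonicity gives
\[H(\gx u(x_{i_0}),u(x_{i_0}),x_{i_0})\ge H(\gx \psi(x_{i_0}),\psi(x_{i_0}),x_{i_0})\ge 0,\]
again contradicting the assumed strict inequality. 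Hence $u$ is a solution, and the bounds $\phi\le u\le\psi$ are built into the construction.

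The main technical delicacy will be tracking the non-local effect of the one-vertex perturbation on the graph gradient: lifting $u$ at $x_{i_0}$ raises all components of $\gx u(x_{i_0})$ (except the vanishing $i_0$-th one) while simultaneously lowering the $i_0$-th component of $\gx u(x_k)$ at every other vertex $x_k$. Monotonicity of $H$ conveniently points the right way in both places, which is what makes the Perron bump work on a graph; the boundary case $u(x_{i_0})=\psi(x_{i_0})$ is the only subtle point and is handled cleanly by gradient comparison with $\psi$.
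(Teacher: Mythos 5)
Your proposal is correct in substance and follows the same Perron strategy as the paper: form the class of subsolutions with the prescribed boundary values, take the pointwise supremum, and rule out failure of the supersolution property by a one-vertex bump, using continuity of $H$ at the bumped vertex and monotonicity at all the others. Two differences in execution are worth noting. First, the paper does not impose $v\leq\psi$ in its Perron class $\F$; that bound is deduced from comparison, so the bumped function contradicts the definition of the supremum directly and no boundary case $u(x_{i_0})=\psi(x_{i_0})$ arises (you handle that case correctly anyway, by the gradient comparison with $\psi$, which is essentially Proposition \ref{prop:monotone}). Second, the paper verifies the subsolution property of the supremum by an $\eps$-approximation plus continuity of $H$, whereas you use closure of $\cS$ under pointwise maxima together with attainment of the supremum; this route is fine, but your justification of attainment is off: $\cS$ is \emph{not} trapped between $\phi$ and $\psi$ (its elements are bounded above by $\psi$ but need not dominate $\phi$), so $\cS$ need not be bounded below, hence need not be compact. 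The fix is one line and uses your own lemma: replacing any $v\in\cS$ by $\max(v,\phi)\in\cS$ does not decrease values, so the supremum over $\cS$ equals the supremum over the compact set $\cS\cap\{\phi\leq v\leq\psi\}$ and is attained. Relatedly, $\phi\in\cS$ requires $\phi\leq\psi$ on all of $\X$, which is not an assumption of the theorem but follows from the comparison hypothesis applied to the pair $(\phi,\psi)$.
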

The proof of Theorem \ref{thm:existence} is very similar to existing results (e.g., Theorem 4 of \cite{calder2018game}). We include the proof in Appendix \ref{sec:proofs} for reference.
\begin{remark}
Notice that none of the results in this section have required graph connectivity, which is a common assumption in the analysis of PDEs on graphs. Normally, graph connectivity is used in a \emph{path to the boundary} argument to establish a comparison principle (see, e.g., \cite{manfredi2015nonlinear,calder2018game,calder2019lip}). Our arguments do not require graph connectivity to establish comparison. The one place connectivity requirements may appear is in the construction of the super and subsolutions $\phi$ and $\psi$ in the Perron method  in Theorem \ref{thm:existence}.
\label{rem:connected}
\end{remark}

\subsection{Graph distance functions}
\label{sec:graph_distance}

The graph distance $d_G:\X\times \X\to R$ is defined by 
\begin{equation}\label{eq:graph_distance}
d_G(x_i,x_j) = \min_{m\geq 1}\min_{\tau\in I_n^m} \left\{w_{i,\tau_1}^{-1} + \sum_{i=1}^{m-1} w_{\tau_i,\tau_{i+1}}^{-1}+  w_{\tau_m,j}^{-1}\right\}.
\end{equation}
We use the interpretation that $w_{ij}^{-1}=\infty$ whenever $w_{ij}=0$, which implicitly restricts the feasible paths to follow edges in the graph and to connect $x_i$ to $x_j$. 
\begin{definition}\label{def:connected}
We say that the graph $G$ is connected if $d_G(x_i,x_j) < \infty$ for all $x_i,x_j \in \X$.
\end{definition}
We also define the graph distance to a set $\Gamma\subset \X$ as follows
\[d_G(x_i,\Gamma) = \min_{x_j \in \Gamma}d_G(x_i,x_j).\]
We recall that the graph distance function satisfies a certain graph eikonal equation. The result is well-known (see, e.g., Lemma 3 of \cite{bungert2021uniform}), but usually stated for symmetric graphs, so we will sketch a proof for completeness.
\begin{lemma}[{\cite{bungert2021uniform}}]\label{lem:graph_eikonal}
Assume $G$ is connected and let $\Gamma\subset \X$. Then the graph distance function $u(x):=d_G(x,\Gamma)$ is the unique solution of the graph eikonal equation. 
\begin{equation}\label{eq:graph_eikonal}
\max_{x_j\in \X} w_{ji}(u(x_i) - u(x_j)) = 1 \ \ \text{for all } x_i\in \X\setminus \Gamma,
\end{equation}
satisfying $u(x_i)=0$ for $x_i\in \Gamma$.
\end{lemma}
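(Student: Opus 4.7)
The plan is to split the proof into uniqueness, handled by the comparison machinery of Section~\ref{sec:general}, and existence, handled by the shortest-path dynamic programming principle.

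For uniqueness, rewrite \eqref{eq:graph_eikonal} as $H(\gx u(x_i),x_i)=0$ with $H(q,x_i)=G_i(q)-1$ and $G_i(q):=\max_{j}w_{ji}q_j$. Since $w_{ji}\geq 0$, the map $q\mapsto H(q,x_i)$ is monotone in the sense of Definition~\ref{def:monotone} (the pointwise maximum of linear forms with nonnegative coefficients is coordinatewise nondecreasing), and for each fixed $i$, $G_i$ is positively $1$-homogeneous. The argument of Lemma~\ref{lem:comparison}(iii) therefore applies (the proof there is pointwise in $x_i$, so it does not matter that $G$ depends on $i$ through the weights) with $f\equiv 1>0$, yielding comparison for \eqref{eq:graph_eikonal}. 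Applying comparison in both directions to two solutions that agree on $\Gamma$ forces them to agree on all of $\X$.

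For existence, I would verify directly that $u(x_i):=d_G(x_i,\Gamma)$ satisfies \eqref{eq:graph_eikonal}. Connectedness of $G$ (together with $\Gamma\neq\emptyset$, which is implicit) makes $u$ finite on $\X$, and $u=0$ on $\Gamma$ by definition. For $x_i\in\X\setminus\Gamma$, concatenating a single edge with an optimal remaining path yields the Bellman identity
\[u(x_i)=\min_{j}\bigl\{w_{ji}^{-1}+u(x_j)\bigr\},\]
where the minimum ranges over those $j$ with $w_{ji}>0$, with the orientation dictated by the convention in \eqref{eq:graph_distance} (reverse the path, if needed, to match the direction appearing in the equation). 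Rearranging gives $w_{ji}(u(x_i)-u(x_j))\leq 1$ for every such $j$, with equality at the minimizer. Extending the maximum trivially to all $j\in I_n$ (vanishing-weight terms contribute $0\leq 1$) shows that $u$ solves \eqref{eq:graph_eikonal}.

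The only point that really requires care is the bookkeeping needed to line up the convention between $w_{ij}$ and $w_{ji}$ in the path definition \eqref{eq:graph_distance} and the equation \eqref{eq:graph_eikonal}; beyond that, both steps are routine, since the Bellman identity is standard for shortest paths on finite weighted graphs and comparison is an immediate consequence of Lemma~\ref{lem:comparison}. I do not foresee a substantive obstacle.
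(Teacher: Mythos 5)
Your proposal is correct, and its existence half is exactly the paper's argument: verify that $d_G(\cdot,\Gamma)$ satisfies the Bellman/dynamic programming identity \eqref{eq:dpp} and rearrange (using that the max is attained at an edge with $w_{ji}>0$, so multiplying inside by $w_{ji}$ is harmless) to get \eqref{eq:graph_eikonal}. Where you genuinely diverge is uniqueness. The paper ``runs the proof in the opposite direction'': it argues that any solution of \eqref{eq:graph_eikonal} satisfies the dynamic programming principle \eqref{eq:dpp} and is therefore the graph distance function; this is elementary and self-contained, but it quietly relies on the (standard, yet unproved there) fact that the DPP with zero boundary data on $\Gamma$ has a unique solution. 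You instead invoke the comparison machinery of Section \ref{sec:general}: the Hamiltonian $H(q,x_i)=\max_j w_{ji}q_j-1$ is monotone, of the form $G_i(q)-f(x_i)$ with $f\equiv 1>0$ and $G_i$ positively $1$-homogeneous, so Lemma \ref{lem:comparison}(iii) applies (your remark that the $x_i$-dependence of $G_i$ through the weights is harmless is right, since the perturbation argument there is pointwise; the paper itself uses Lemma \ref{lem:comparison} in exactly this way for the $p$-eikonal equation in Theorem \ref{thm:peikonal_wellposed}), and comparison applied in both directions gives uniqueness. This route buys a cleaner uniqueness statement that plugs into the paper's general framework and avoids the implicit ``DPP solutions are unique'' step, at the cost of importing Theorem \ref{thm:strict_comp} and Lemma \ref{lem:comparison}; the paper's route is more elementary and directly yields the distance-function characterization. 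Your attention to the $w_{ij}$ versus $w_{ji}$ orientation in \eqref{eq:graph_distance} versus \eqref{eq:graph_eikonal} (reversing paths to match the incoming-edge convention, and noting $\Gamma\neq\emptyset$ is implicit) is appropriate and, if anything, slightly more careful than the paper's sketch.
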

\begin{remark}\label{rem:eikonal_monotone}
We call \eqref{eq:graph_eikonal} the \emph{graph eikonal equation}, since its solution is a distance function, in the same way that the continuum eikonal equation (see Section \ref{sec:continuum_eikonal}) represents continuum path distances. In the notation of Section \ref{sec:general}, the graph eikonal equation corresponds to the \emph{monotone} Hamiltonian $H(q) = \max_{1\leq j\leq n}w_{ji}q_j - 1$. 

In terms of computational complexity, the solution of \eqref{eq:graph_eikonal} can be computed with Dijkstra's algorithm in $\cO(nK\log(n))$ time, where we recall $K$ is the maximum (unweighted) degree of any node in the graph, defined in \eqref{eq:unweighted_degree}.
\end{remark}
\begin{proof}[Proof of Lemma \ref{lem:graph_eikonal}]
The main idea of the proof is to use the fact that $u$ satisfies the dynamic programming principle
\begin{equation}\label{eq:dpp}
u(x_i) = \min_{x_j\in \X}(u(x_j) + w_{ji}^{-1}).
\end{equation}
Since the graph is connected, there exists some $j$ with $w_{ji}>0$, and both $u(x_j)$ and $u(x_i)$ are finite. We can rearrange this to obtain
\[\max_{x_j \in \X} (u(x_i) - u(x_j) - w_{ji}^{-1}) = 0.\]
Since the max is zero, we can multiply by $w_{ji}$ inside the brackets above and rearrange to obtain the result. To prove uniqueness, we can run the proof in the opposite direction, showing that any solution of \eqref{eq:graph_eikonal} satisfies the dynamic programming principle \eqref{eq:dpp}, and is thus the graph distance function $d_G(\cdot,\Gamma)$.
\end{proof}

It is common to consider density weighted distances in data science and machine learning applications. This allows us to make it more expensive for paths to travel through sparse regions in space, and less expensive to travel within dense regions. This makes points within clusters closer together, while driving points in different clusters further apart, which is useful for cluster and semi-supervised learning. 

In the context of the graph eikonal equation \eqref{eq:graph_eikonal}, density weighting can be introduced by solving the equation with a right hand side, of the form
\begin{equation}\label{eq:density_eikonal}
\max_{x_j\in \X} w_{ji}(u(x_i) - u(x_j)) = f(x_i) \ \ \text{for all } x_i\in \X\setminus \Gamma.
\end{equation}
One can choose, for example, $f(x_i)=\hat{\rho}(x_i)^{-\alpha}$ for $\alpha\geq 0$, where $\hat{\rho}:\X\to \R$ is any density estimator (say, a kernel density estimator or a $k$-nearest neighbor estimator), and $\alpha$ is a tunable parameter. Since we did not assume the graph was connected in Lemma \ref{lem:graph_eikonal}, we can apply the lemma to \eqref{eq:density_eikonal} with the graph weights $\bar{w}_{ij} = f(x_j)^{-1}w_{ij}$ to obtain that the solution of \eqref{eq:density_eikonal} subject to $u(x_i)=0$ for $x_i\in \Gamma$ corresponds to the density weighted graph distance
\begin{equation}\label{eq:density_graph_distance}
d_{G,f}(x_i,x_j) := \min_{m\geq 1}\min_{\tau\in I_n^m} \left\{w_{i,\tau_1}^{-1}f(x_{\tau_1}) + \sum_{i=1}^{m-1} w_{\tau_i,\tau_{i+1}}^{-1}f(x_{\tau_{i+1}})+  w_{\tau_m,j}^{-1}f(x_{\tau_j})\right\}.
\end{equation}
When $f=\hat{\rho}^{-\alpha}$ with $\alpha\geq 0$, the reweighted equation \eqref{eq:density_eikonal} makes it more expensive for paths to travel through regions where the density, $\hat{\rho}$, is low, and less expensive where the density is high. Of course, choosing $\alpha\leq 0$ has the opposite effect.

\subsubsection{Sensitivity to noise}

We mention that the graph eikonal equation \eqref{eq:density_eikonal} is highly sensitive to corruption in the weight matrix $W$ used to construct the graph. Indeed, we can see this quite easily from the distance function interpretation, since adding a single spurious edge between two distant nodes in a graph creates a \emph{short-cut} that drastically changes the distance function. Thus, while the graph eikonal equation \eqref{eq:density_eikonal} does indeed approximate geodesic distances on the underlying data manifold well (see, e.g., \cite{hwang2016shortest}), the graph distance lacks robustness to noise and other corruptions. To illustrate this, we refer to Figure \ref{fig:robust_eikonal}, which shows how drastically the graph distance function can change with the addition of a few spurious edges in the graph. The graph is a simple unweighted proximity graph on $n=20000$ uniformly distributed random variables on the unit ball. Points within distance $\epsilon=0.05$ are connected by an edge with edge weight of $1$, and the boundary set $\Gamma$ is chosen to be all points within distance $\epsilon$ of the boundary of the ball. From left to right in Figure \ref{fig:robust_eikonal}, we add 0, 10, 20, and 50 corrupted edges at random, and show the resulting distance functions to the boundary.  

\subsection{The p-eikonal equation}
\label{sec:peikonal}

\begin{figure}[!t]
\centering
\subfloat[Graph distance function with corrupted edges]{
\includegraphics[width=0.24\textwidth]{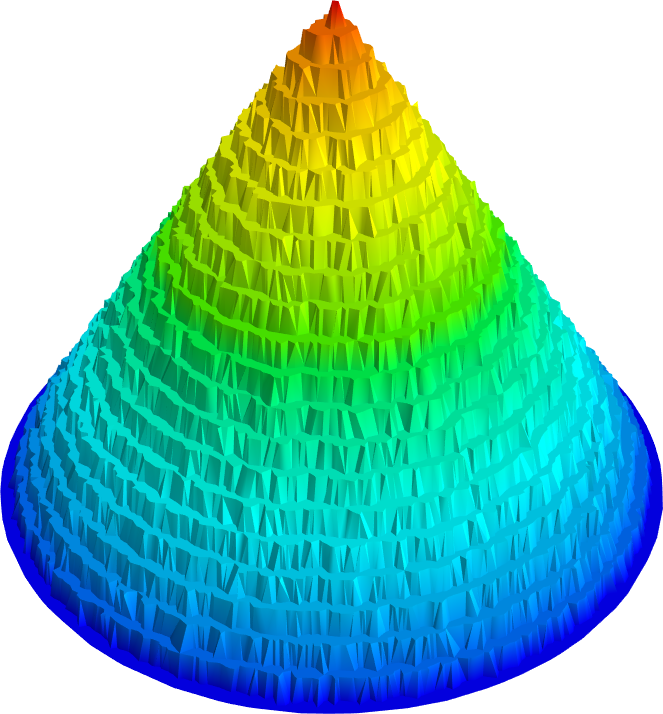}
\includegraphics[width=0.24\textwidth]{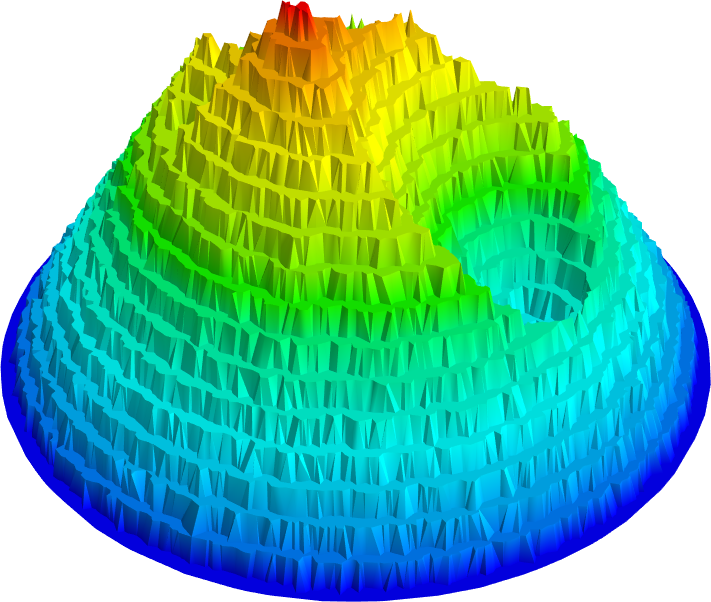}
\includegraphics[width=0.24\textwidth]{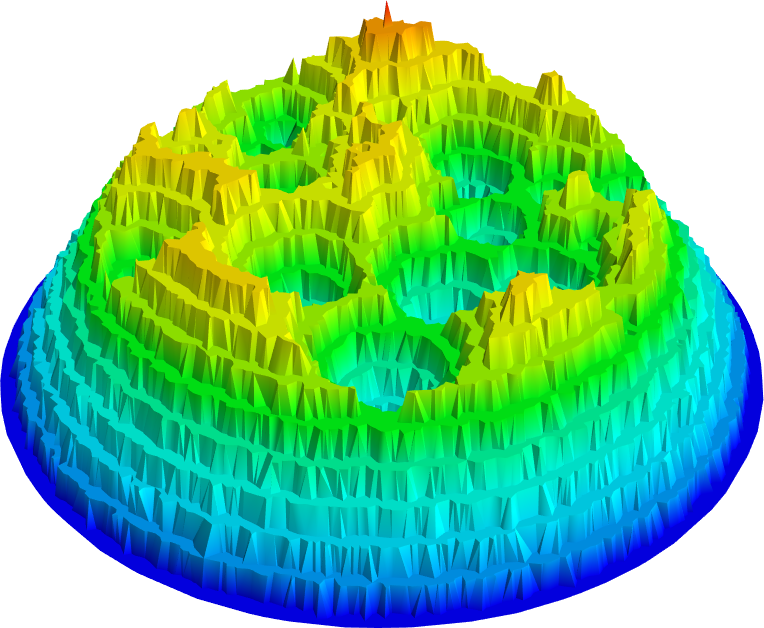}
\includegraphics[width=0.24\textwidth]{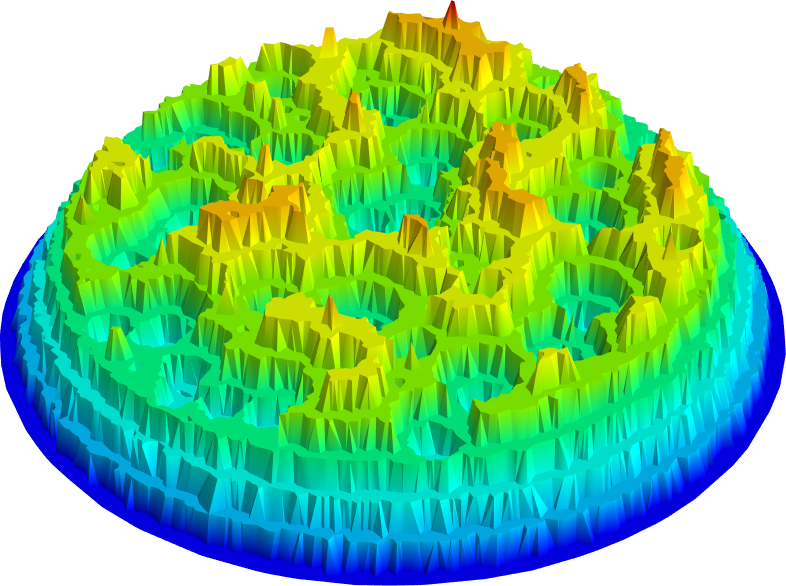}
\label{fig:robust_eikonal}}\\
\subfloat[$p$-eikonal equation with $p=1$ with corrupted edges]{
\includegraphics[width=0.24\textwidth]{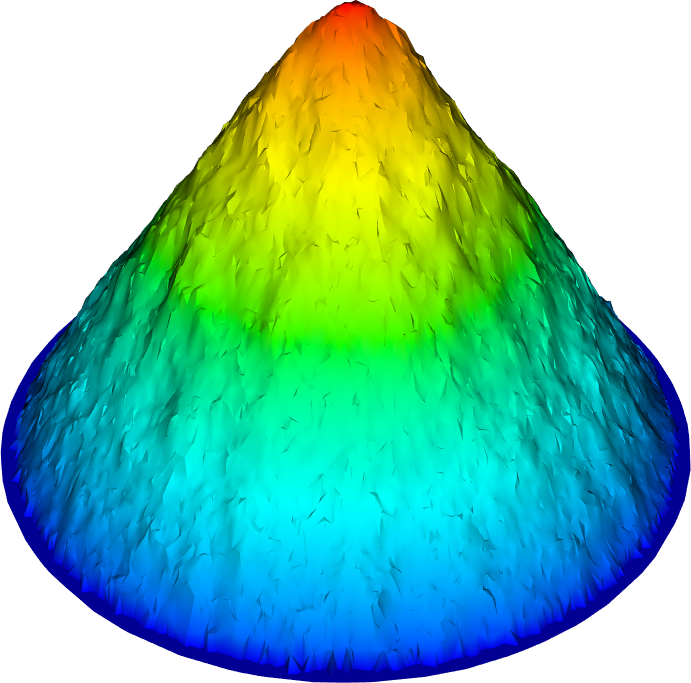}
\includegraphics[width=0.24\textwidth]{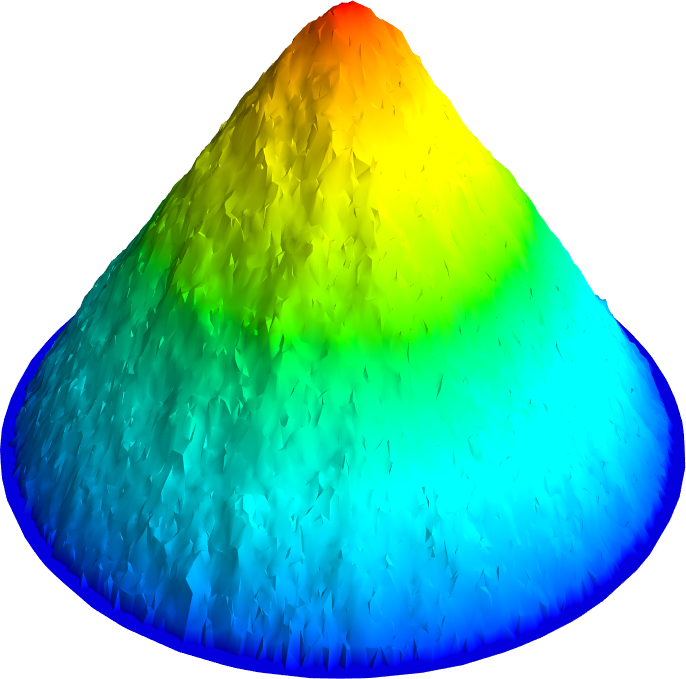}
\includegraphics[width=0.24\textwidth]{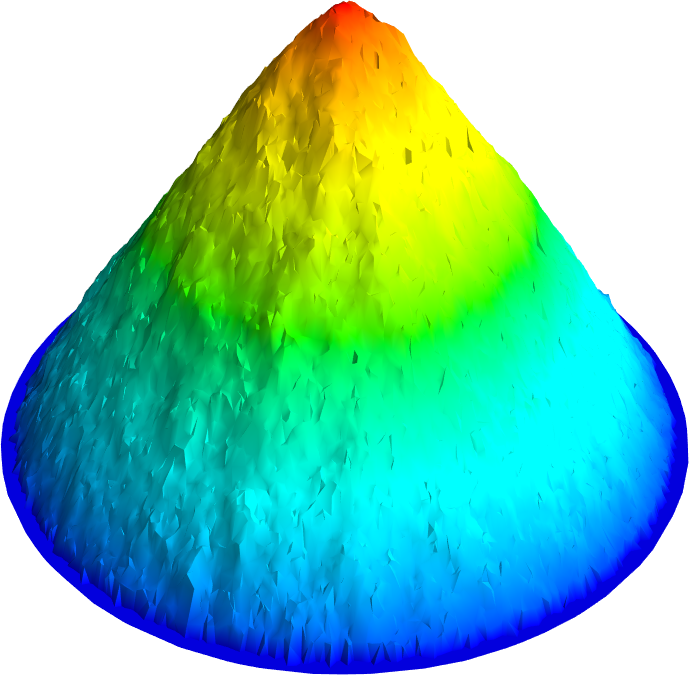}
\includegraphics[width=0.24\textwidth]{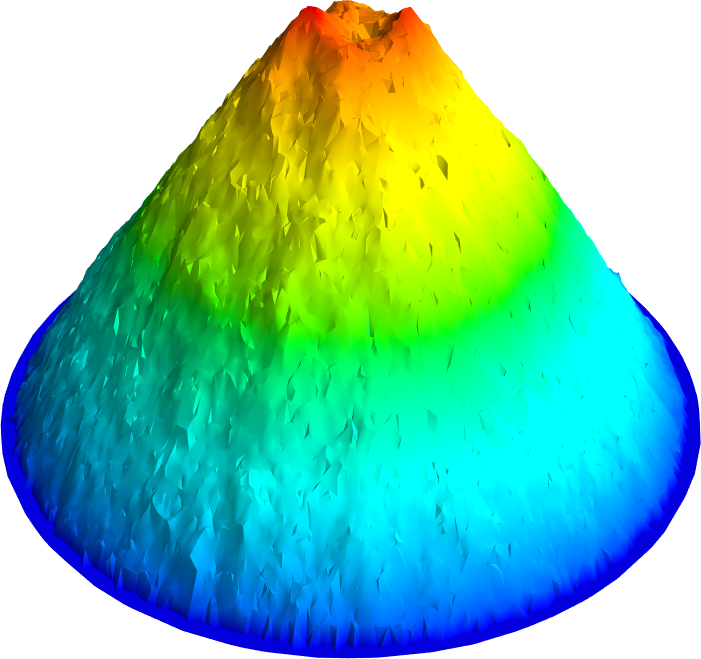}
\label{fig:robust_peikonal}}
\caption{Robustness of graph-distance functions compared to the $p$-eikonal equation under random corruptions of edges in the graph. We computed each distance function on an unweighted proximity graph over $n=20000$ uniformly distributed random variables on the unit ball with graph connectivity length scale $\epsilon=0.05$. The boundary points $\Gamma$ were chosen to be all points within $\epsilon$ of the boundary of the unit ball, so the distance function gives a notion of data depth. From left to right we added an increasing number of corrupted edges (0, 10, 50, and 200) with edge weight $w_{ij}=1$.  We see the solution of the $p$-eikonal equation is far more robust under the addition of corrupted edges.}
\label{fig:robustness_graphdistance}
\end{figure}

The issue with lack of robustness of the graph eikonal equation \eqref{eq:density_eikonal} stems from the form of the \emph{max} in the operator, which means its value is highly sensitive to a single outlying edge weight. We introduce here the \emph{$p$-eikonal equation} on a graph, which uses information from all neighbors, and as we will show below, gives a more robust distance function on a graph. For $p>0$, we define the $p$-eikonal operator $\cA_{G,p}:\lx \to \lx$ by
\begin{equation}\label{eq:peikonal_op}
\cA_{G,p} u(x_i) = \sum_{j=1}^n w_{ji}(u(x_i) - u(x_j))_+^p,
\end{equation}
where $a_+:=\max\{a,0\}$ is the positive part. For $\Gamma \subset \X$ and $f\in \lx$, we consider the $p$-eikonal equation
\begin{equation}\label{eq:graph_peikonal}
\left\{
\begin{aligned}
\cA_{G,p}u  &=f ,&& \text{in } \X \setminus \Gamma \\
u &=0,&& \text{on } \Gamma.
\end{aligned}
\right.
\end{equation}
We show in Figure \ref{fig:robust_peikonal} the robustness experiment described in the last section with the $p$-eikonal equation with $p=1$. The $p$-eikonal equation is clearly more robust to the additional corrupted edges in the graph. After some preliminary results, we prove in Theorem \ref{thm:robust}, below, a robustness estimate for the $p$-eikonal equation that explains the experimental results in Figure \ref{fig:robust_peikonal}. 

We first use the theory from Section \ref{sec:general} to establish that \eqref{eq:graph_peikonal} is well-posed.  For $p\geq 0$ we denote by $G^p$ the graph $G^p=(\X,W^p)$ with weights $W^p=(w_{ij}^p)_{i,j=1}^n$. We interpret $0^0=0$ so that $G^0$ is the unweighted graph with the same edges as $G$.
\begin{theorem}[Well-posedness]\label{thm:peikonal_wellposed}
Let $p>0$ and $f>0$. If $G$ is connected, then \eqref{eq:graph_peikonal} has a unique solution $u\in \lx$, and
\begin{equation}\label{eq:peikonal_bound}
K^{-\frac{1}{p}}\left(\min_{\X}f^{\frac{1}{p}}\right)d_{G^{\frac{1}{p}}}(x_i,\Gamma)\leq u(x_i) \leq \left(\max_{\X}f^{\frac{1}{p}}\right)d_{G^{\frac{1}{p}}}(x_i,\Gamma).
\end{equation}
\end{theorem}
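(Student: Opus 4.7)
The plan is to apply Theorem~\ref{thm:existence} together with the comparison principle from Lemma~\ref{lem:comparison}(iii), and then derive the two-sided bound in \eqref{eq:peikonal_bound} by using carefully scaled multiples of $d(x_i):=d_{G^{1/p}}(x_i,\Gamma)$ as sub- and supersolutions.

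First I would recast the equation in the form of Section~\ref{sec:general} with Hamiltonian $H(q,z,x_i)=\sum_j w_{ji}(q_j)_+^p-f(x_i)$. Since $a\mapsto a_+^p$ is nondecreasing and $w_{ji}\ge 0$, $H$ is monotone in the sense of Definition~\ref{def:monotone} and continuous in $(q,z)$. Writing $H=G(q)-f(x_i)$ with $G(q)=\sum_j w_{ji}(q_j)_+^p$ positively $p$-homogeneous ($G(aq)=a^pG(q)$ for $a\ge 0$) and $f>0$, Lemma~\ref{lem:comparison}(iii) delivers comparison, and uniqueness is then immediate since two solutions with the same boundary data $0$ on $\Gamma$ are mutually sub- and supersolutions.

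The key computational step is to control $\cA_{G,p}d$. Since $G$ connected implies $G^{1/p}$ connected (same edges), Lemma~\ref{lem:graph_eikonal} applied on $G^{1/p}$ gives $\max_j w_{ji}^{1/p}(d(x_i)-d(x_j))=1$ on $\X\setminus\Gamma$. I would extract from this the pointwise sandwich
\[1\ \le\ \cA_{G,p}d(x_i)\ \le\ K\qquad\text{for all }x_i\in\X\setminus\Gamma,\]
the lower bound by taking the index $j^\star$ attaining the max (so $w_{j^\star i}(d(x_i)-d(x_{j^\star}))_+^p=1$), and the upper bound since each of the at most $K$ nonzero terms satisfies $w_{ji}(d(x_i)-d(x_j))_+^p\le 1$.

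Existence and the two-sided bound then follow by scaling. For existence I would take subsolution $\phi\equiv 0$ (valid since $f>0$) and supersolution $\psi:=(\max_\X f^{1/p})\,d$; by $p$-homogeneity $\cA_{G,p}\psi=(\max_\X f)\cA_{G,p}d\ge\max_\X f\ge f$, and $\phi=\psi=0$ on $\Gamma$, so Theorem~\ref{thm:existence} produces a solution $u$ with $0\le u\le\psi$, which is the upper inequality of \eqref{eq:peikonal_bound}. For the lower inequality I would introduce the sharper subsolution $\underline\phi:=K^{-1/p}(\min_\X f^{1/p})\,d$, for which $\cA_{G,p}\underline\phi\le K^{-1}(\min_\X f)\cdot K\le f$; comparison between $\underline\phi$ and $u$ (both $0$ on $\Gamma$) gives $\underline\phi\le u$. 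The only delicate point is threading the $p$-th power through the scaling so that the factor $K^{-1/p}$ exactly absorbs the $K$ from the upper bound on $\cA_{G,p}d$; everything else is a direct application of the abstract machinery developed in Sections~\ref{sec:general} and~\ref{sec:graph_distance}.
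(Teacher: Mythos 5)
Your proposal is correct and follows essentially the same route as the paper: recast the equation with the Hamiltonian $H(q,x_i)=\sum_j w_{ji}(q_j)_+^p-f(x_i)$, get comparison from Lemma~\ref{lem:comparison}(iii), existence from the Perron method (Theorem~\ref{thm:existence}), and obtain \eqref{eq:peikonal_bound} by scaling multiples of $d_{G^{1/p}}(\cdot,\Gamma)$ using Lemma~\ref{lem:graph_eikonal} on $G^{1/p}$. The only cosmetic difference is that you isolate the sandwich $1\le \cA_{G,p}d\le K$ and then invoke $p$-homogeneity, while the paper plugs the scaled functions $c\,d$ and $C\,d$ into the operator directly.
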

\begin{proof}
In the notation of Section \ref{sec:general}, the $p$-eikonal equation \eqref{eq:graph_peikonal} corresponds to the Hamiltonian
\[H(q,x_i) =\sum_{j=1}^n w_{ji}(q_j)_+^p - f(x_i).\]
This Hamiltonian is monotone, and positively $p$-homogeneous (and also convex in $q$ when $p\geq 1$). Thus, \eqref{eq:graph_peikonal} admits comparison by Lemma \ref{lem:comparison}. Hence, existence follows from the Perron method (Theorem \ref{thm:existence}), provided we can exhibit a subsolution $\phi$ and supersolution $\psi$ with $\phi=0\leq \psi$ on $\Gamma$. We can take $\phi=0$, but we will construct a larger subsolution to prove the bound \eqref{eq:peikonal_bound}. 

For $c>0$ to be determined, let us define 
\[\phi(x_i) = cd_{G^{\frac{1}{p}}}(x_i,\Gamma).\]
By Lemma \ref{lem:graph_eikonal}, $\phi$ solves the graph eikonal equation
\[\max_{x_j\in \X} w_{ji}^{\frac{1}{p}}(\phi(x_i) - \phi(x_j))_+ = c.\]
Since the right hand side $c$ is positive, we can trivially add the positive part above.  Then we have
\begin{align*}
\A_{G,p} \phi(x_i) &= \sum_{j=1}^n w_{ji}(\phi(x_i) - \phi(x_j))_+^p\\
&\leq K\max_{x_j\in \X}w_{ji}(\phi(x_i) - \phi(x_j))_+^p\\
&\leq K\left(\max_{x_j\in \X}w_{ji}^{\frac{1}{p}}(\phi(x_i) - \phi(x_j))_+\right)^p= Kc^p.
\end{align*}
Setting $c = K^{-\frac{1}{p}}\min_\X f^{\frac{1}{p}}$, we have $\A_{G,p}\phi \leq f$ on $\X\setminus \Gamma$, which proves the subsolution condition. We likewise define 
\[\psi(x_i) = Cd_{G^{\frac{1}{p}}}(x_i,\Gamma),\]
and use a similar argument to find that a choice of $C=\max_{\X}f^{\frac{1}{p}}$ yields the supersolution condition.
\end{proof}
\begin{remark}\label{rem:pinfty}
Let $u_p$ for $p>0$ denote the solution of \eqref{eq:graph_peikonal}, which exists and is unique due to Theorem \ref{thm:peikonal_wellposed}. By \eqref{eq:peikonal_bound} we see that $u_p \to d_{G^0,p}(\cdot,\Gamma)$ as $p\to \infty$. Thus, the $p\to \infty$ limit of the $p$-eikonal equation recovers the unweighted graph distance. By a similar argument, the solution of $\A_{G^p,p}u_p = f^p$ will satisfy $u_p\to u$ as $p\to \infty$, where $u$ is the solution of the graph eikonal equation \eqref{eq:density_eikonal}. 
\end{remark}

\subsubsection{Robustness to noise}

We now turn to the question of robustness of the $p$-eikonal equation to graph perturbations.  We consider a perturbation $\tilde{W} = W +\delta W$, where the perturbation matrix $\delta W\in \R^{n\times n}$ has nonnegative entries $\delta W_{ij}\geq 0$. This models corruption in the weight matrix by either adding new edges that did not exist in the original graph, or increasing the weights at existing edges. 
\begin{theorem}[Robustness]\label{thm:robust}
Let $\delta W$ have nonnegative entries, and set $\tilde{G}=(\X,W+\delta W)$ and $\delta G=(\X,\delta W)$. Let $\Gamma\subset \X$, $f\in \lx$ with $0< f_{min} \leq f \leq f_{max}$, and let $u,\tilde{u}\in \lx$ satisfy
\begin{equation}\label{eq:noisy_pde}
\left\{
\begin{aligned}
 \A_{\tilde{G},p}\tilde{u}(x_i) = \A_{G,p}u(x_i)  &=f(x_i) ,&& \text{if } x_i \in \X\setminus \Gamma \\
\tilde{u}(x_i) = u(x_i) &=0 ,&& \text{if } x_i\in \Gamma.
\end{aligned}
\right.
\end{equation}
Then for all $x_i\in \X$ we have
\begin{equation}
\label{eq:perturbBound}
0 \leq \frac{u(x_i) - \tilde{u}(x_i)}{u(x_i)} \leq \left(\max_{\X \setminus \Gamma} \frac{\A_{\delta G,p}u}{f}\right)^{\frac{1}{p}}.
\end{equation} 
\end{theorem}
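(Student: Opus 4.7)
The plan is to obtain both inequalities as applications of Lemma~\ref{lem:comparison}(iii) to the positively $p$-homogeneous operator $\A_{\tilde G, p}$ against the positive right-hand side $f$. As a preliminary, $u,\tilde u \geq 0$ on $\X$ and strictly positive on $\X\setminus\Gamma$: if $u$ attained a nonpositive value at some $x_i \in \X\setminus\Gamma$, then $u(x_i) \leq u(x_j)$ for every $j$, forcing $\A_{G,p}u(x_i) = 0$ and contradicting $f>0$; the same applies to $\tilde u$. In particular the ratio $(u-\tilde u)/u$ is well defined on $\X\setminus\Gamma$, and is identically zero on $\Gamma$.

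For the lower bound I would observe that, since $\delta W$ has nonnegative entries,
\[\A_{\tilde G, p} u = \A_{G, p} u + \A_{\delta G, p} u = f + \A_{\delta G, p} u \geq f = \A_{\tilde G, p}\tilde u \quad \text{on } \X\setminus\Gamma,\]
while $u = \tilde u = 0$ on $\Gamma$. Lemma~\ref{lem:comparison}(iii) then yields $\tilde u \leq u$, so $(u-\tilde u)/u \geq 0$.

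For the upper bound, set $\alpha := \bigl(\max_{\X\setminus\Gamma} \A_{\delta G, p} u / f\bigr)^{1/p}$. If $\alpha \geq 1$ the bound is trivial (as $\tilde u \geq 0$), so assume $\alpha < 1$ and take the ansatz $v := (1-\alpha)u$. Using positive $p$-homogeneity of $\A_{\tilde G,p}$ together with the pointwise bound $\A_{\delta G, p} u \leq \alpha^p f$,
\[\A_{\tilde G, p} v = (1-\alpha)^p \A_{\tilde G, p} u = (1-\alpha)^p \bigl(f + \A_{\delta G, p} u\bigr) \leq (1-\alpha)^p(1+\alpha^p)\, f.\]
It then suffices to invoke the elementary inequality $(1-\alpha)^p(1+\alpha^p) \leq 1$ for $\alpha \in [0,1]$ and $p\geq 1$, which follows from $x^p \leq x$ on $[0,1]$ (so $\alpha^p + (1-\alpha)^p \leq 1$) via $(1-\alpha)^p(1+\alpha^p) \leq (1-\alpha^p)(1+\alpha^p) = 1-\alpha^{2p} \leq 1$. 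This gives $\A_{\tilde G, p} v \leq f$ on $\X\setminus\Gamma$, and since $v = 0 = \tilde u$ on $\Gamma$, Lemma~\ref{lem:comparison}(iii) yields $v \leq \tilde u$, i.e., $(1-\alpha)u \leq \tilde u$. Dividing by $u$ produces the stated upper bound.

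The main obstacle is identifying the correct ansatz: the natural scaling $v=\lambda u$ with the largest admissible $\lambda$ reduces everything to the elementary inequality above. The argument tacitly uses $p\geq 1$ (which is the principal regime of the paper, with $p=1$ the case of primary interest); for $p\in(0,1)$ the inequality $(1-\alpha)^p(1+\alpha^p)\leq 1$ can fail (e.g.\ $p=1/2$, $\alpha=1/2$), so a different argument would be required in that range.
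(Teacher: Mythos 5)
Your proof is correct and takes essentially the same route as the paper: the lower bound follows from monotonicity in the weights plus the comparison principle, and the upper bound from scaling one of the two solutions by a constant determined by $\max_{\X\setminus\Gamma}\A_{\delta G,p}u/f$ and comparing again. The only cosmetic difference is that you scale the subsolution $u$ down by $(1-\alpha)$ and use $(1-\alpha)^p(1+\alpha^p)\leq 1$, whereas the paper scales $\tilde u$ up by $(1+\alpha^p)^{1/p}$ and then uses $(1+M)^{1/p}\leq 1+M^{1/p}$ together with $\tilde u\leq u$; both versions implicitly require $p\geq 1$ in that elementary inequality, as you note.
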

\begin{proof}
We denote the entries of $\delta W$ by $\delta w_{ij}$. Since $\delta w_{ij}\geq 0$ we have $\A_{\tilde{G},p}u \geq \A_{G,p}u = f = \A_{\tilde G,p}\tilde u$ on $\X\setminus \Gamma$, and so by the comparison principle we have $0 \leq \tilde{u} \leq u$ on $\X$. By the linearity of $\A_{G,p}$ in the weight matrix we have 
\begin{align*}
\frac{\A_{\tilde{G},p}u(x_i)}{\A_{\tilde{G},p}\tilde{u}(x_i)} &= \frac{f(x_i)+ \A_{\delta G,p}u(x_i)}{f(x_i)} \leq 1 + \max_{\X \setminus \Gamma} \frac{\A_{\delta G,p}u}{f} =:C,
\end{align*}
for all $x_i\in \X\setminus \Gamma$. Therefore $\A_{\tilde{G},p}u \leq \A_{\tilde{G},p} (C^{\frac{1}{p}}\tilde{u})$ on $\X\setminus \Gamma$. By the comparison principle we have $u\leq C^{\frac{1}{p}}\tilde{u}$, and so
\[u(x_i) \leq  \left(1 + \max_{\X \setminus \Gamma} \frac{\A_{\delta G,p}u}{f}\right)^{\frac{1}{p}}\tilde{u}(x_i) \leq \tilde{u}(x_i) + \left(\max_{\X \setminus \Gamma} \frac{\A_{\delta G,p}u}{f}\right)^{\frac{1}{p}}u(x_i),\]
which completes the proof.
\end{proof}
\begin{remark}\label{rem:}
Theorem \ref{thm:robust} controls the relative error between $u$ and $\tilde{u}$. We note, in particular, that the dependence on $p$ shows that $p=1$ offers the greatest robustness, and as $p\to \infty$ we lose the robustness completely. We note that there are several ways we can reformulate Theorem \ref{thm:robust}. First, let us define the \emph{upwind} $1$-norm of a matrix, relative to the function $u\in \lx$, by
\[\|A\|_{u,1} = \max_{1\leq j\leq n} \sum_{i=1}^n |A_{ij}|\one_{u(x_j)>u(x_i)}.\]
We note that $\|A\|_{u,1}\leq \|A\|_1$, where $\|A\|_1 = \max_{1\leq j\leq n} \sum_{i=1}^n |A_{ij}|$ is the usual $1$-norm. The norm $\|\delta W\|_{u,1}$ measures the maximum amount of corruption among the incoming edges of any node from directions where $u$ is smaller (the \emph{upwind} direction). Then we compute
\[\A_{\delta G,p}u(x_i) = \sum_{j=1}^n \delta w_{ji}(u(x_i) - u(x_j))_+^p \leq u(x_i)^p \sum_{j=1}^n \delta w_{ji}\one_{u(x_i) > u(x_j)} \leq u(x_i)^p \|\delta W\|_{u,1}.\]
Thus, Theorem \ref{thm:robust} implies that
\[0 \leq \frac{u(x_i) - \tilde{u}(x_i)}{u(x_i)} \leq \left(\max_{\X \setminus \Gamma} \frac{u}{f^{\frac{1}{p}}}\right)\|\delta  W\|_{u,1}.\]
Finally, using the upper bound in Theorem \ref{thm:peikonal_wellposed} we obtain
\[0 \leq \frac{u(x_i) - \tilde{u}(x_i)}{u(x_i)} \leq C\left( \frac{f_{max}}{f_{min}}\right)^{\frac{1}{p}}\|\delta W\|^{\frac{1}{p}}_{u,1},\]
where $C=\max_{x_i\in \X}d_{G^{\frac{1}{p}}}(x_i,\Gamma)$.
\end{remark}

\subsubsection{Computational complexity}
\label{sec:comp}

The $p$-eikonal equation \eqref{eq:graph_peikonal} can be solved in a similar computational time as the graph eikonal equation \eqref{eq:density_eikonal} using the fast marching method \cite{sethian1996fast} on a graph. The solution of \eqref{eq:graph_peikonal} via fast marching requires repeatedly solving the equation
\begin{equation}\label{eq:scheme}
\sum_{j=1}^n w_{ji}(t - s_j)_+^p = a,
\end{equation}
for the unknown $t$, given $s_j$, $j=1,\dots,n$, and $a$. Of course, only the $s_j$ with $w_{ji}>0$ need to be considered. Since the left hand side is increasing in $t$, the equation can be solved with a bisection search for any $p>0$. Using a tolerance of $\delta$, the complexity of solving the scheme \eqref{eq:scheme} with a bisection search is $\cO(K\log(\delta^{-1}))$, where $K$ is the maximum unweighted degree of the graph defined in \eqref{eq:unweighted_degree}.  

When $p=1$, we can in fact solve the scheme \eqref{eq:scheme} explicitly without a bisection search. We first sort the $s_j$ in ascending order (and relabel the $w_{ji}$ in the same order), and then note that the solution $t$ will have the form
\[ t=t_m:=\frac{a + \sum_{j=1}^m w_{ji}s_j}{\sum_{j=1}^m w_{ji}},\]
for some $m\leq n$. We can compute all the $t_m$ recursively in $\cO(K\log(K))$ time, and simply check which is correct, yielding $\cO(K\log(K))$ complexity for solving \eqref{eq:scheme} when $p=1$.  A similar observation can be made for $p=2$, except that  $t=t_m$ will be the solution of a quadratic equation.

The fast marching method visits each node in the graph exactly once, in order of increasing values of the solution $u(x_i)$.  When each node is visited, the scheme \eqref{eq:scheme} is solved at all neighbors of the node. Each time the scheme is solved, a heap\footnote{The heap stores the current best guesses of $u(x_i)$ for nodes $x_i$ that have not been finalized/visited yet. At each iteration of fast marching, we need to retrieve the node with smallest best guess, which can be done in $\cO(\log(n))$ time with a heap data structure. When updating the scheme at all neighbors, the heap needs to be updated, also taking $\cO(\log(n))$ time.} of size at most $n$ is updated, which takes $\log(n)$ time. Thus the fast marching method takes $\cO(nK^2\log(K)\log(n))$ computational time for $p=1$ or $p=2$, and $\cO(nK^2\log(\delta^{-1})\log(n))$ time for other positive values of $p$, where $\delta$ is the bisection solver tolerance. In our implementation, of the method, we use the exact solution of the scheme for $p=1$, and the bisection search for all $p>1$ (i.e., we did not implement the quadratic method described above for $p=2$, since we found it did not improve over the bisection search).

\subsubsection{Shortest paths}
\label{sec:short}

While the solution of the $p$-eikonal equation \eqref{eq:graph_peikonal} does not represent a true distance function on the graph, as the eikonal equation \eqref{eq:density_eikonal} does, we can still construct a notion of a \emph{shortest path} from any $x_i\in \X\setminus \Gamma$ back to the set $\Gamma$, by descending on $u$ as quickly as possible. In particular, given the solution $u$ of \eqref{eq:graph_peikonal} and an initial point $x_{i_0}\in \X\setminus \Gamma$, we select the next point, for $k\geq 0$, to satisfy
\[x_{i_{k+1}} \in \argmin_{\substack{x_j\in \X \\ w_{j,i_k}>0}} u(x_j).\]
In other words, the next point is the neighbor of $x_{i_k}$ with the smallest value of $u$, which is the ``closest'' to $\Gamma$. Provided $f>0$ and $x_{i_k}\not\in \Gamma$, there must exist a neighbor with a strictly smaller value for $u$, otherwise we would have $\A_{G,p} u(x_{i_k}) = 0 < f(x_{i_k})$, which contradicts that $u$ solves the $p$-eikonal equation \eqref{eq:graph_peikonal}. Thus, the path chosen in this way is strictly decreasing in $u$, that is
\[u(x_{i_0}) > u(x_{i_1}) > u(x_{i_2}) > \cdots.\]
This guarantees that the path can never visit a node twice, and will eventually terminate at a point $x_{i_T}\in \Gamma$ after some number of steps, $T$. The shortest paths computed in this way are shown in red in the data depth experiments in Figures \ref{fig:depth} and \ref{fig:depth3D}. We also use this method to compute the shortest paths through real data in Section \ref{sec:numerics}. 

\subsection{Applications}
\label{sec:discrete_app}

The solution of the $p$-eikonal equation \eqref{eq:graph_peikonal}, while not a true graph distance function, gives us a notion of distance that is useful for data depth and semi-supervised learning. We discuss these applications initially in this section, and show the results of some experiments on toy datasets. We postpone experiments with real data to Section \ref{sec:numerics}.

Given a set $\Gamma\subset \X$ and a density estimation $\hat{\rho}:\X\to \R$, we consider solving the density reweighted $p$-eikonal equation
\begin{equation}\label{eq:graph_peikonal_weighted}
\left\{
\begin{aligned}
\cA_{G,p}u  &=\hat{\rho}^{-\alpha}, && \text{in } \X \setminus \Gamma \\
u &=0,&& \text{on } \Gamma,
\end{aligned}
\right.
\end{equation}
where the exponent $\alpha$ is a tunable parameter.  We denote the solution of \eqref{eq:graph_peikonal_weighted} by
$D^{p,\alpha}_{\Gamma}(x) = u(x)$. When $\Gamma=\{x\}$ is a single point we write $D^{p,\alpha}_x$.

\subsubsection{Data depth}
\label{sec:depth_discrete}

\begin{figure}[!t]
\centering
\subfloat[Moon]{\includegraphics[height=0.25\textheight,clip=true,trim=100 30 200 50]{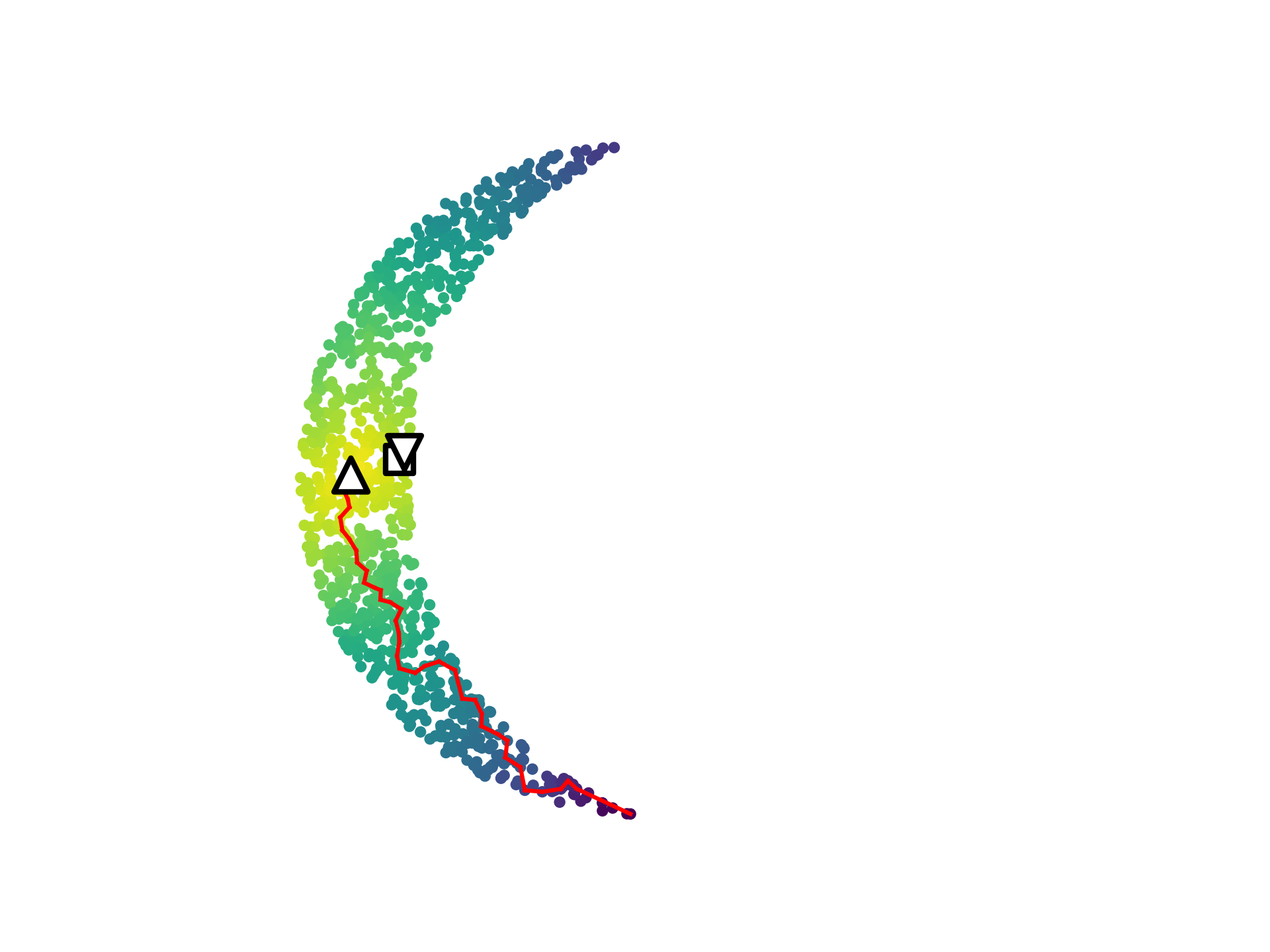}}
\subfloat[Gaussian]{\includegraphics[height=0.25\textheight,clip=true,trim=100 30 100 50]{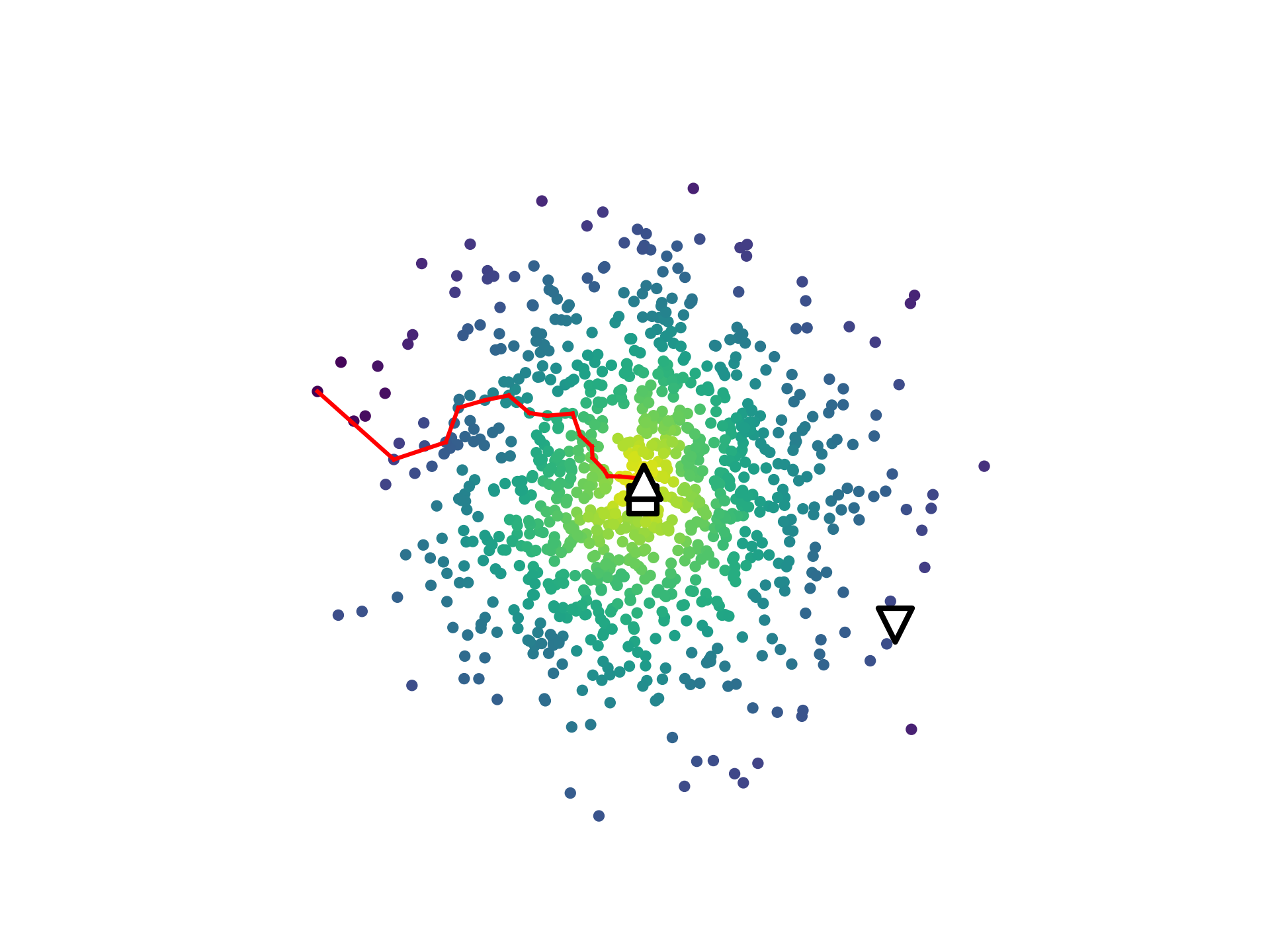}}
\hspace{3mm}
\subfloat[Gaussian mixture]{\includegraphics[height=0.25\textheight,clip=true,trim=100 30 100 50]{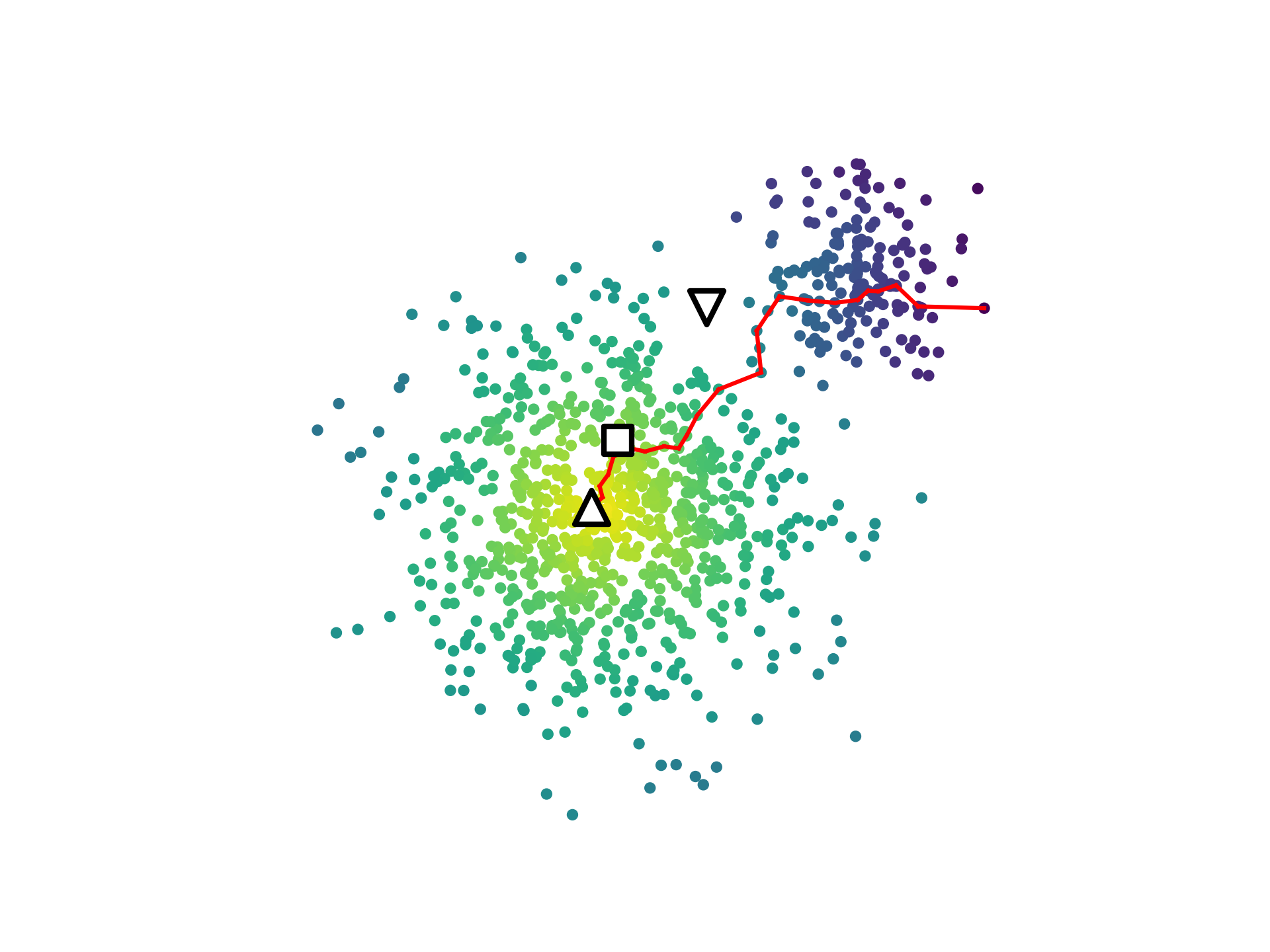}}
\caption{The $p$-eikonal medians and depth on 2D toy datasets with $p=1$. The medians are shown for $\alpha=-1$ ($\triangledown$), $\alpha=0$ ($\square$) and the $\alpha=1$ ($\triangle$), while the points are colored by the $\alpha=1$ data depth. We also show the shortest path from the shallowest point to the deepest point in red. We only recommend $\alpha\geq 0$ in all our applications; we have shown $\alpha=-1$ just to illustrate how reverse density weighting affects the median computation (in this case, it prefers placing the median in sparse regions of the graph).}
\label{fig:depth}
\end{figure}
\begin{figure}[!t]
\centering
\subfloat[Helix]{\includegraphics[height=0.18\textheight,clip=true,trim=160 110 140 80]{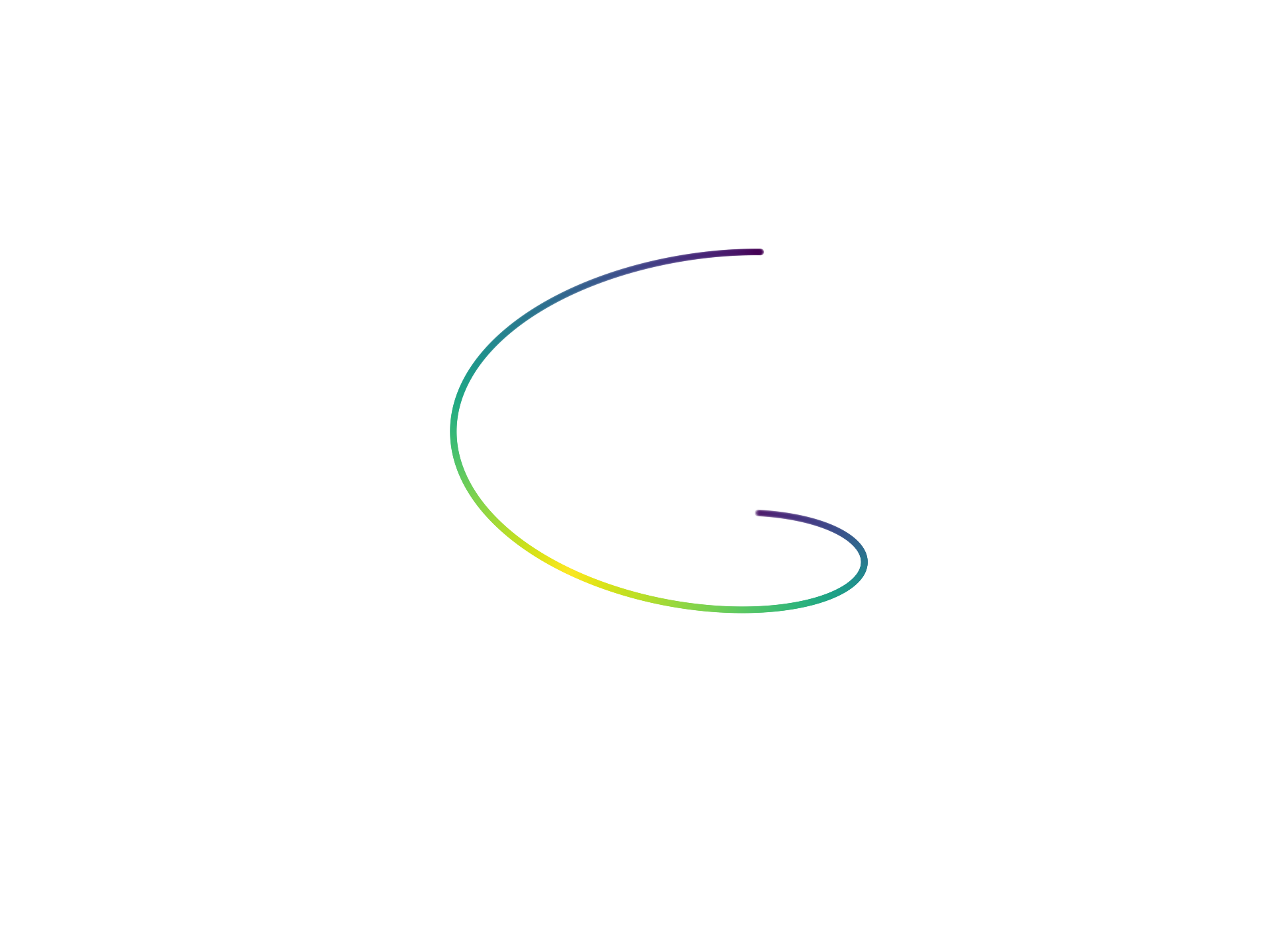}}
\subfloat[Half Sphere]{\includegraphics[height=0.18\textheight,clip=true,trim=130 80 130 110]{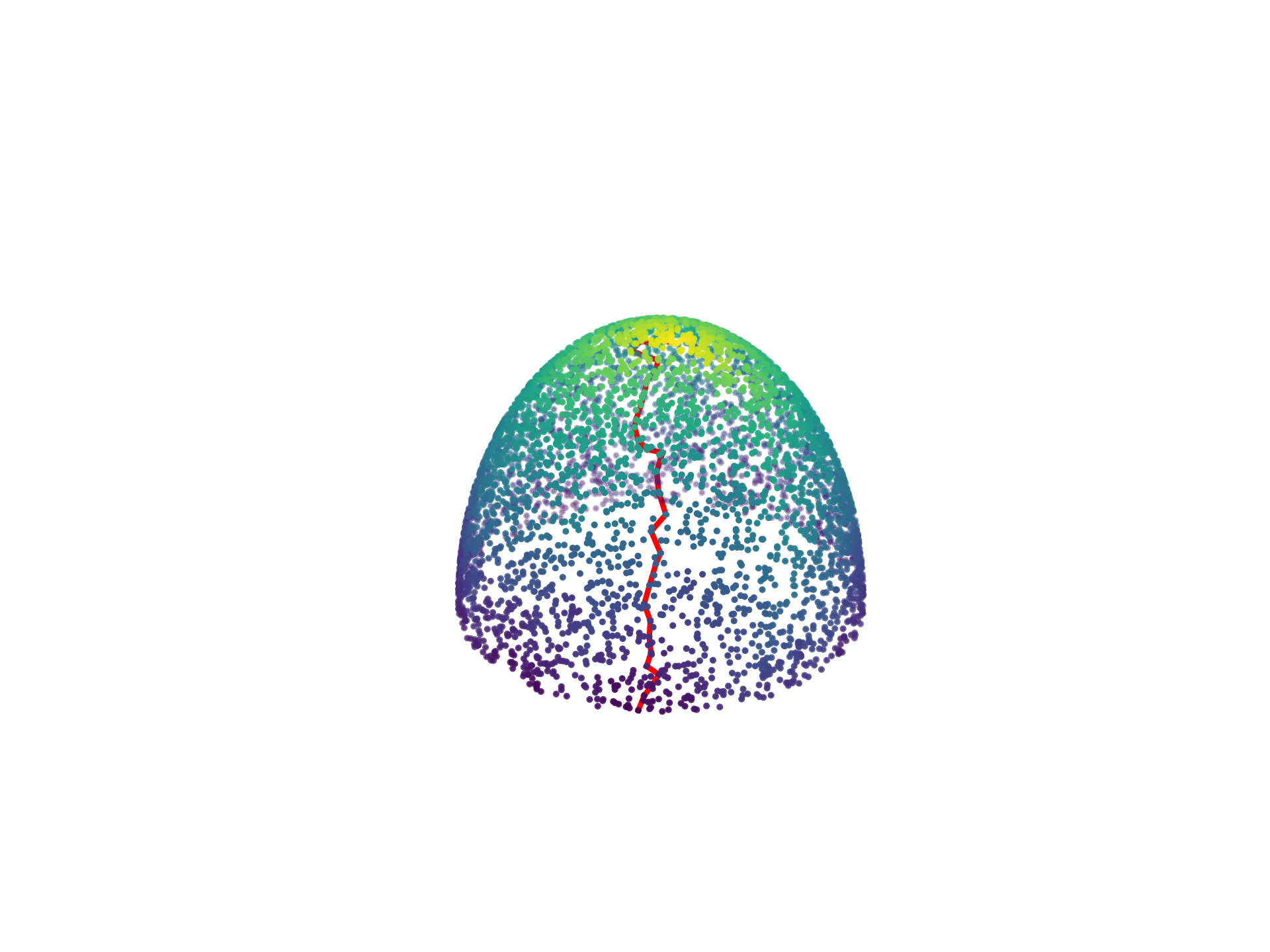}}
\subfloat[Swiss Roll]{\includegraphics[height=0.18\textheight,clip=true,trim=130 100 130 110]{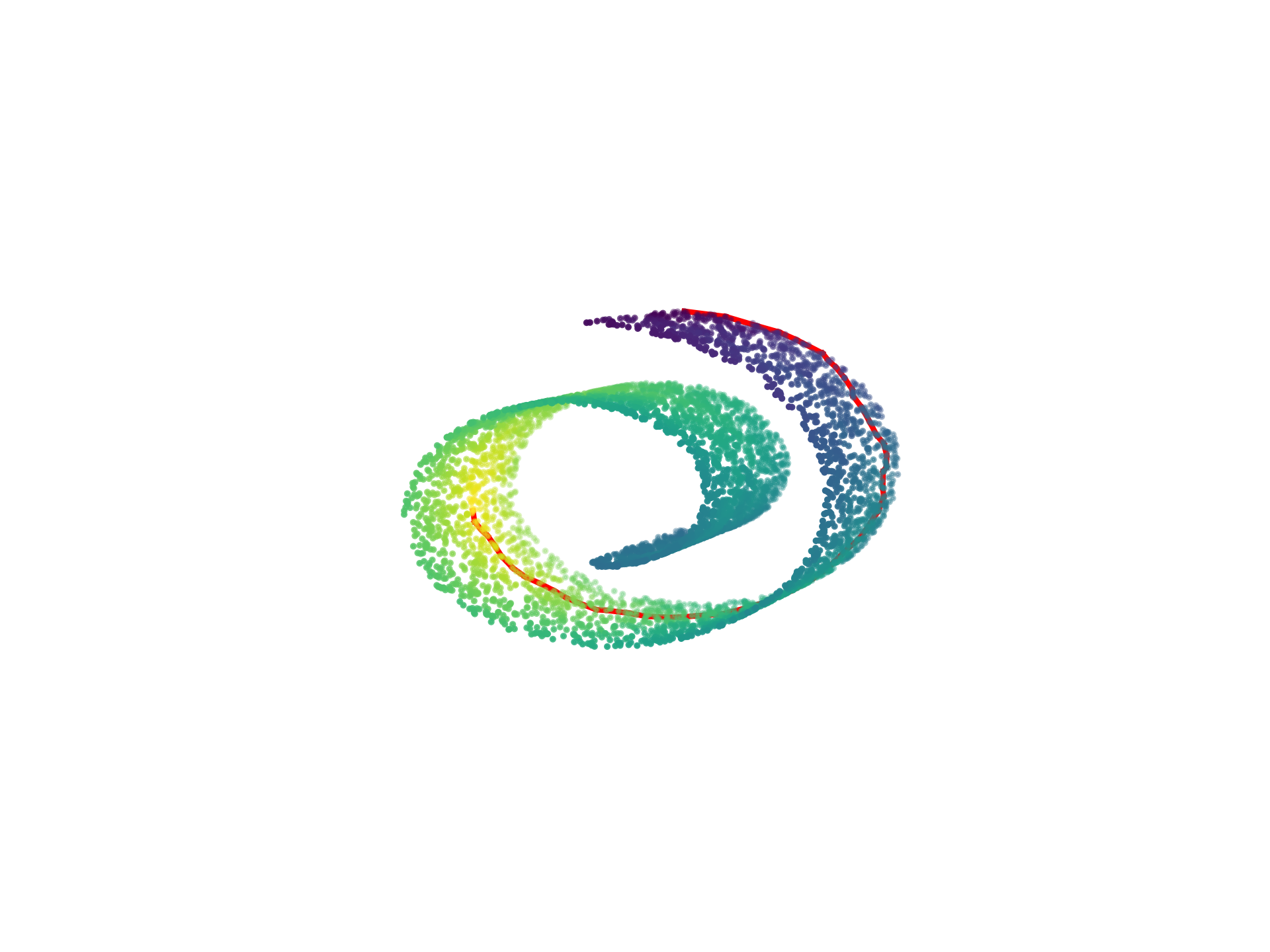}}
\caption{The $p$-eikonal data depth on 3D toy datasets sampled from manifolds embedded in $\R^3$. We use $p=1$ and $\alpha=1$. We note that the swiss roll is more dense on one end than the other, which explains why the depth is not symmetric along the length of the roll.}
\label{fig:depth3D}
\end{figure}

We can approach data depth through the framework of the geometric median. Let us recall that for a collection of points $x_1,\dots,x_n$ in $\R^d$, the \emph{geometric median} $x_*$ is defined by
\[x_* \in \argmin_{x\in \R^d}\sum_{i=1}^n |x_i-x|.\]
The geometric median generalizes the 1-dimensional median, and inherits many of its robustness properties (its breakdown point is also $0.5$, for example). Given the notion of depth $D^{p,\alpha}_x$, we define the $p$-eikonal median $x_{p,\alpha}$ by
\begin{equation}\label{eq:peikonal_median}
x_{p,\alpha}\in \argmin_{x_j\in \X} \sum_{x_i \in \X}D^{p,\alpha}_{x_j}(x_i).
\end{equation}
In practice, we approximate the median by restricting $x_j \in \hat{\X} \subset \X$, where $\hat{\X}$ is a much smaller subset of $\X$ chosen at random. In all our experiments we take $\hat{\X}$ to have 5\% of the points in $\X$. 

Once we have computed the median $x_{p,\alpha}$, we obtain a notion of data depth via the distance to the median
\[ \text{depth}_{p,\alpha}(x) = \max_\X D^{p,\alpha}_{x_{p,\alpha}} - D^{p,\alpha}_{x_{p,\alpha}}(x).\]
Figure \ref{fig:depth} gives an example of the medians and depths for different toy datasets in 2 dimensions, and for $\alpha\in \{-1,0,1\}$. We use $p=1$ in all experiments, and color the point cloud by the $\alpha=1$ depth. We can see that the $\alpha=1$ median outperforms the other weighting choices. In particular, in the Gaussian mixture example, the $\alpha=1$ median is completely insensitive to the addition of the outlying cluster, which has $\frac{1}{6}$ of the points in the main cluster. We show in Figure \ref{fig:depth3D} example of the $p$-eikonal median and depth on point clouds sampled from submanifolds of $\R^3$. In this case we just show the $\alpha=1$ depth. In all images we also show the shortest path, computed as described in Section \ref{sec:short}, from the shallowest to the deepest point.

Let us remark briefly that the density weighting with $\alpha>0$ encourages the median to be placed in regions of high density, since path lengths are shorter here. In contrast, taking $\alpha<0$ encourages the median to be in regions of low density. We do not recommend using $\alpha<0$ in data depth (or in semi-supervised learning).   We also remark that in Figure \ref{fig:depth3D}, the depth on the swiss roll is not symmetric along the length of the roll. This is to be expected with density weighting, since the swiss roll is more dense near one end of the roll (near the origin) and less dense on the other end. We postpone examples of the $p$-eikonal depth on real data to Section \ref{sec:numerics}.

\subsubsection{Semi-supervised learning}
\label{sec:ssl_discrete}

Given the pseudo distances $D^{p,\alpha}_\Gamma$ we can perform semi-supervised learning with a nearest neighbor approach. Suppose we have $k$ classes, and for each class $j=1,\dots,k$, we are provided some labeled nodes $\Gamma_j\subset \X$. The label prediction $\ell_i$ for an unlabeled node $x_i\not \in \Gamma_j$ for any $j$, is the label of the closest labeled node, under the distance $D^{p,\alpha}_\Gamma$, that is 
\begin{equation}\label{eq:label_dec}
\ell_i = \argmin_{1 \leq j \leq k}D^{p,\alpha}_{\Gamma_j}(x_i).
\end{equation}
Semi-supervised learning with the $p$-eikonal equation thus requires solving $k$ separate $p$-eikonal equations, which is similar to the one-vs-rest approach in machine learning for producing a multi-class classifier out of a binary one. 

As we shall see in our analysis later in Section \ref{sec:analysis}, distance-based classifiers can be highly sensitive to the geometry of the clusters, even with appropriate density weighting. In such cases, we can improve the accuracy of the classifier by incorporating information about \emph{class priors}, so that the classifier predicts the correct proportion of nodes in each class. To do this, we follow \cite{calder2020poisson} and modify the label decision with the addition of positive weights $s_1,\dots,s_k$ so that the new label decision is 
\begin{equation}\label{eq:label_dec_priors}
\ell_i = \argmin_{1 \leq j \leq k}\left\{s_jD^{p,\alpha}_{\Gamma_j}(x_i)\right\}.
\end{equation}
By increasing or decreasing the weights $s_j$, we can increase or decrease the number of nodes predicted in each class. The weights $s_j$ can be adjusted incrementally until class balancing is achieved. We do this with the volume constrained label projection method from \cite{calder2020poisson}. 

\begin{figure}[!t]
\centering
\subfloat{\includegraphics[width=0.25\textwidth,clip=true,trim=50 50 50 50]{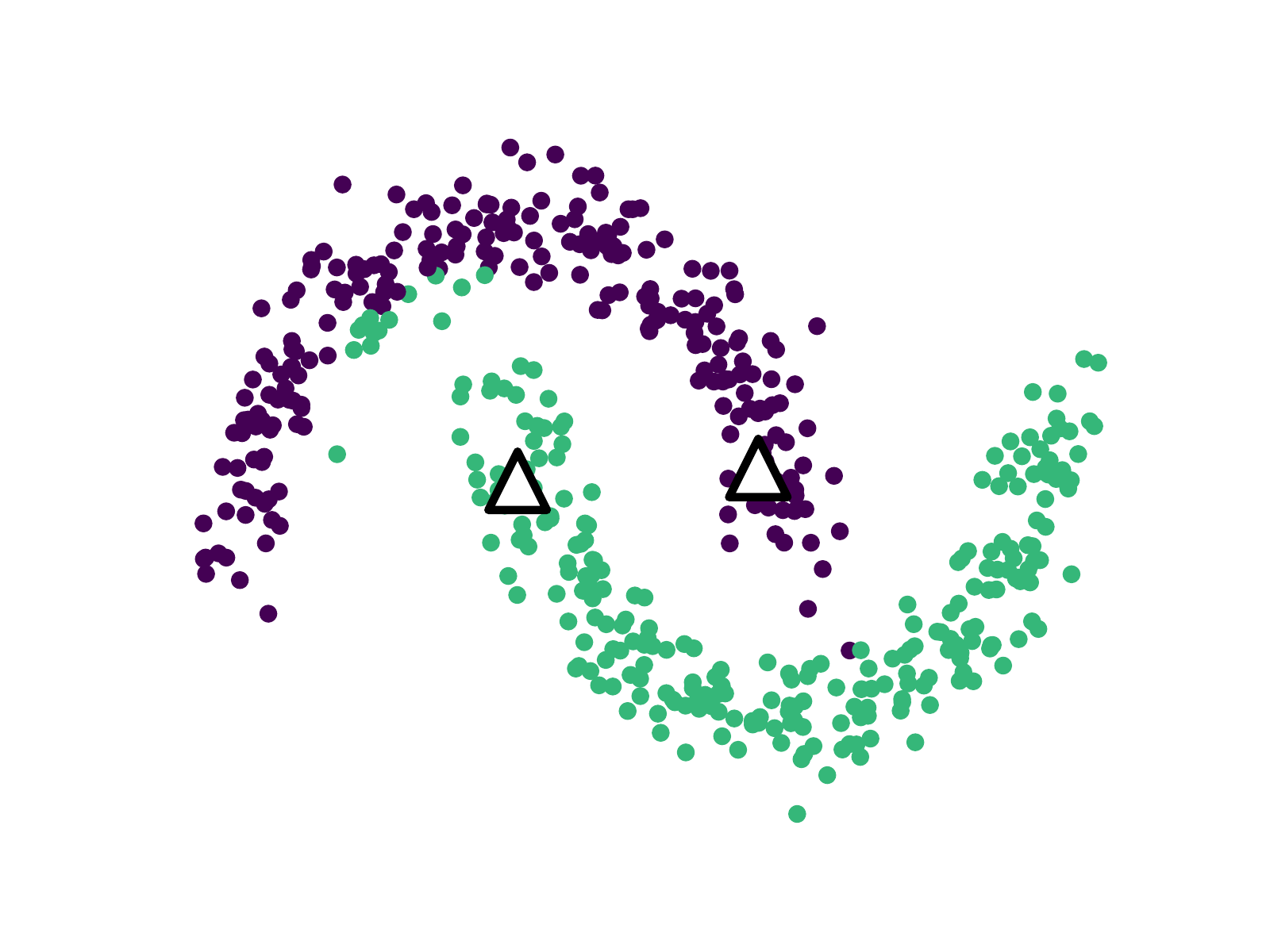}}
\subfloat{\includegraphics[width=0.25\textwidth,clip=true,trim=50 50 50 50]{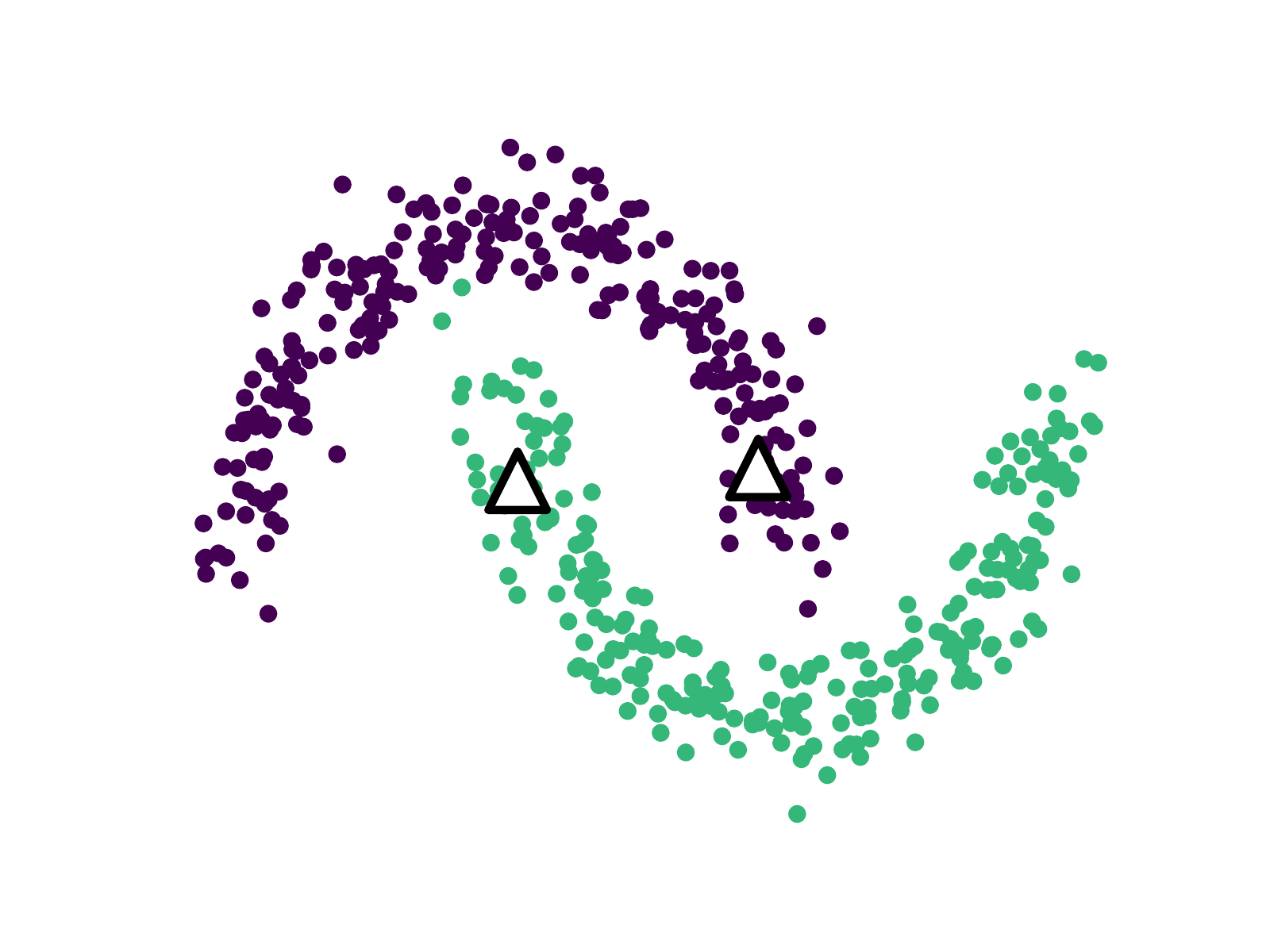}}
\subfloat{\includegraphics[width=0.25\textwidth,clip=true,trim=50 50 50 50]{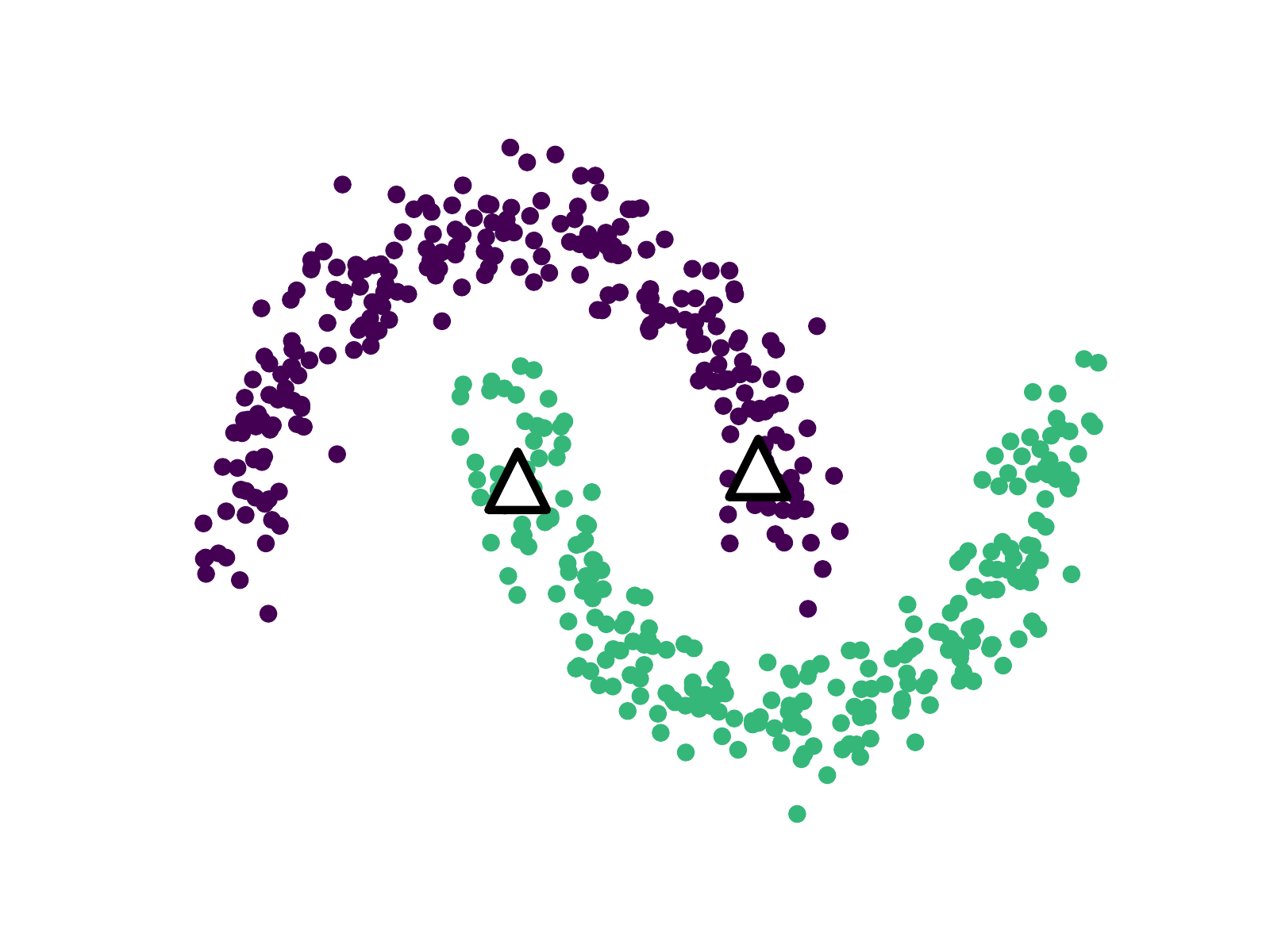}}
\subfloat{\includegraphics[width=0.25\textwidth,clip=true,trim=50 50 50 50]{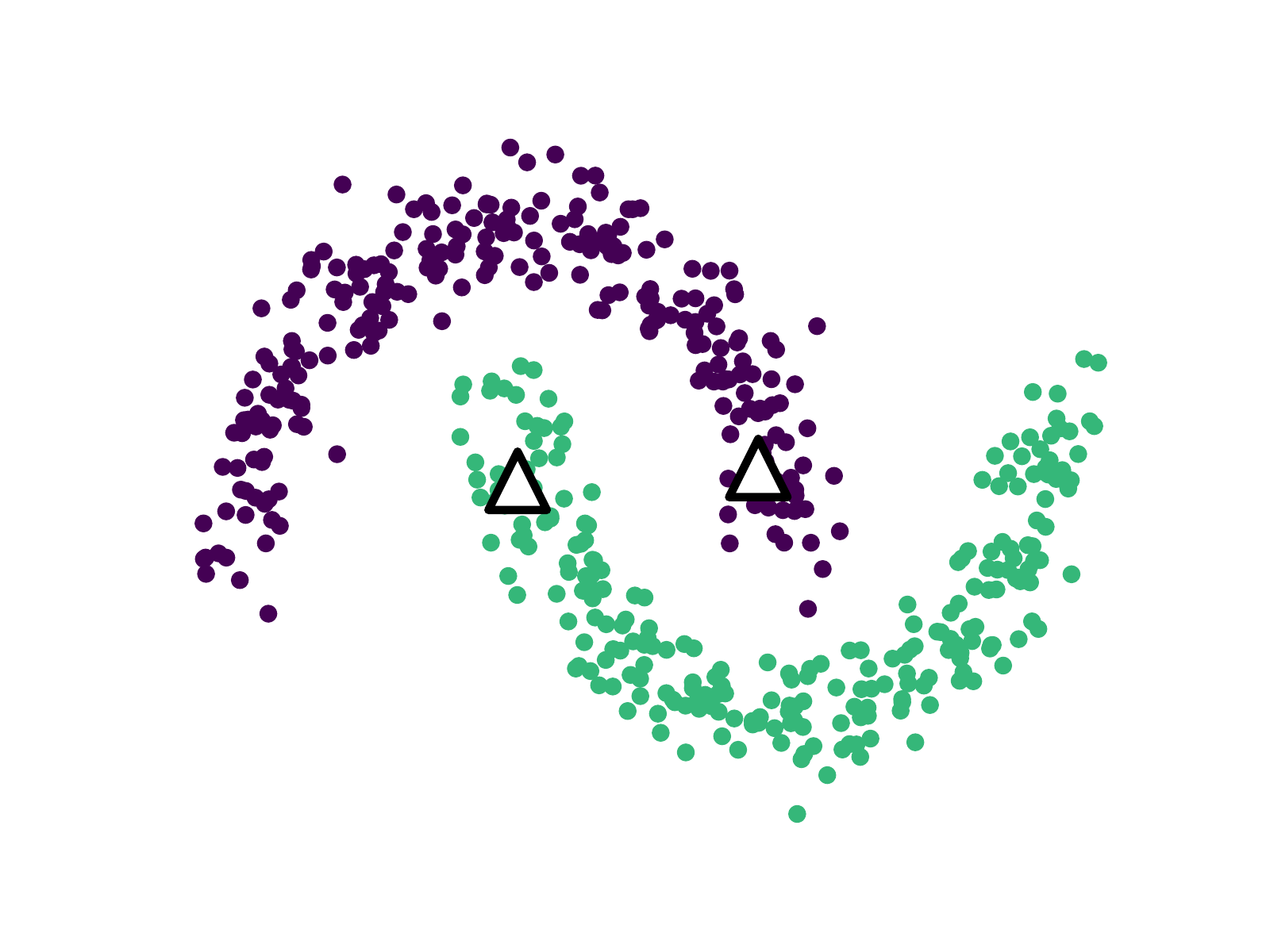}}\\
\subfloat[$\alpha=-1$]{\includegraphics[width=0.25\textwidth,clip=true,trim=50 50 50 50]{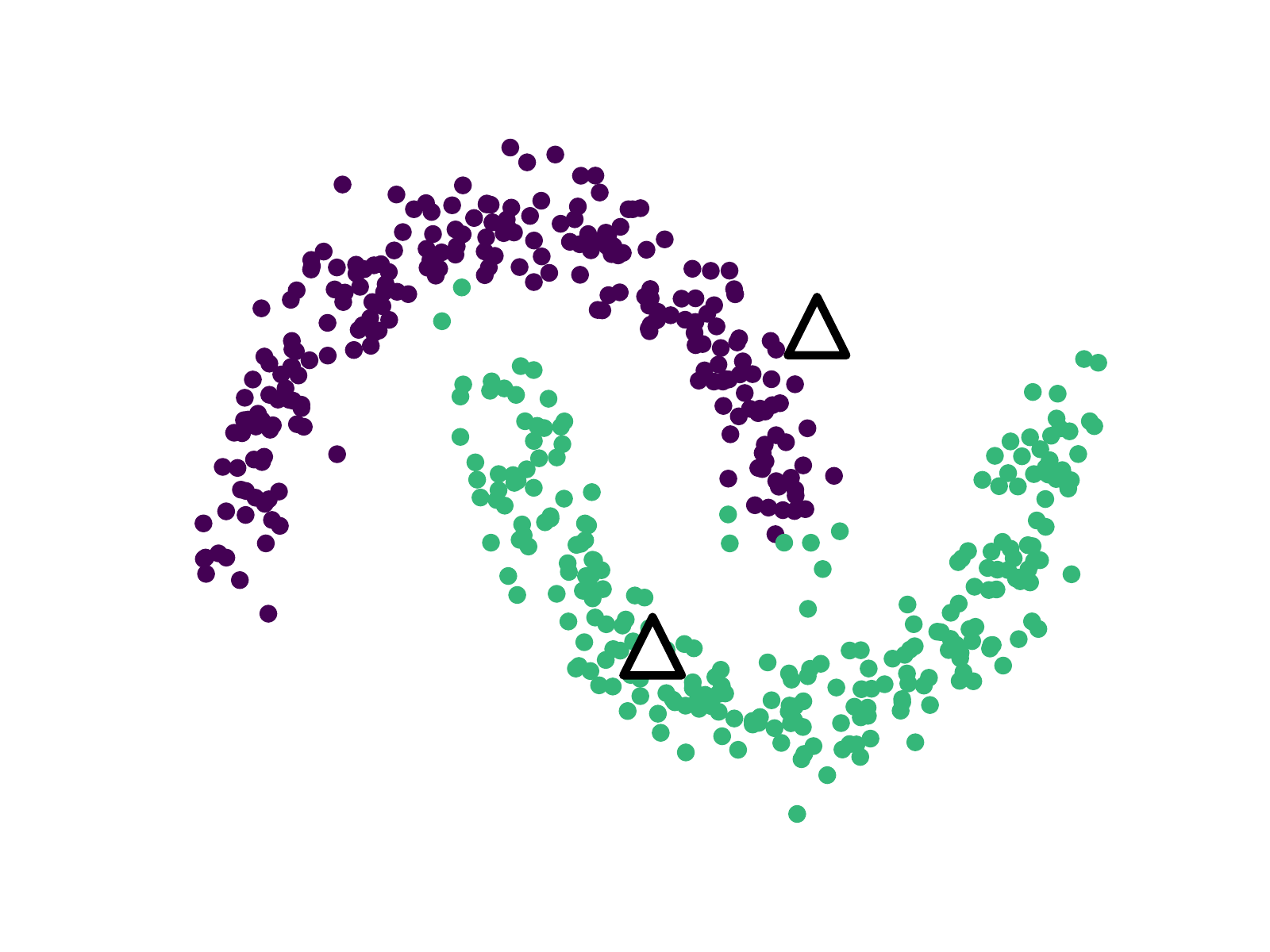}}
\subfloat[$\alpha=0$]{\includegraphics[width=0.25\textwidth,clip=true,trim=50 50 50 50]{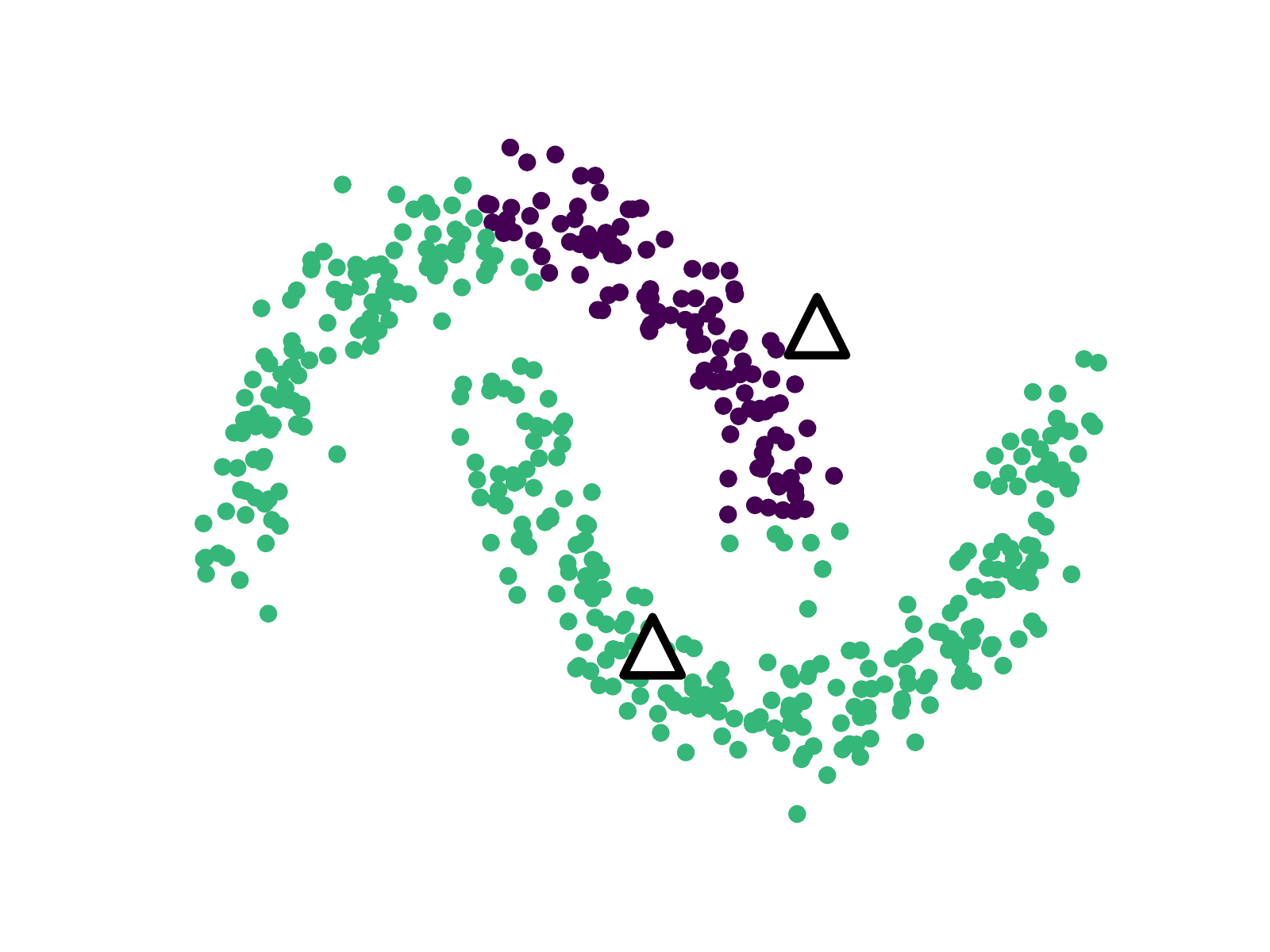}}
\subfloat[$\alpha=1$]{\includegraphics[width=0.25\textwidth,clip=true,trim=50 50 50 50]{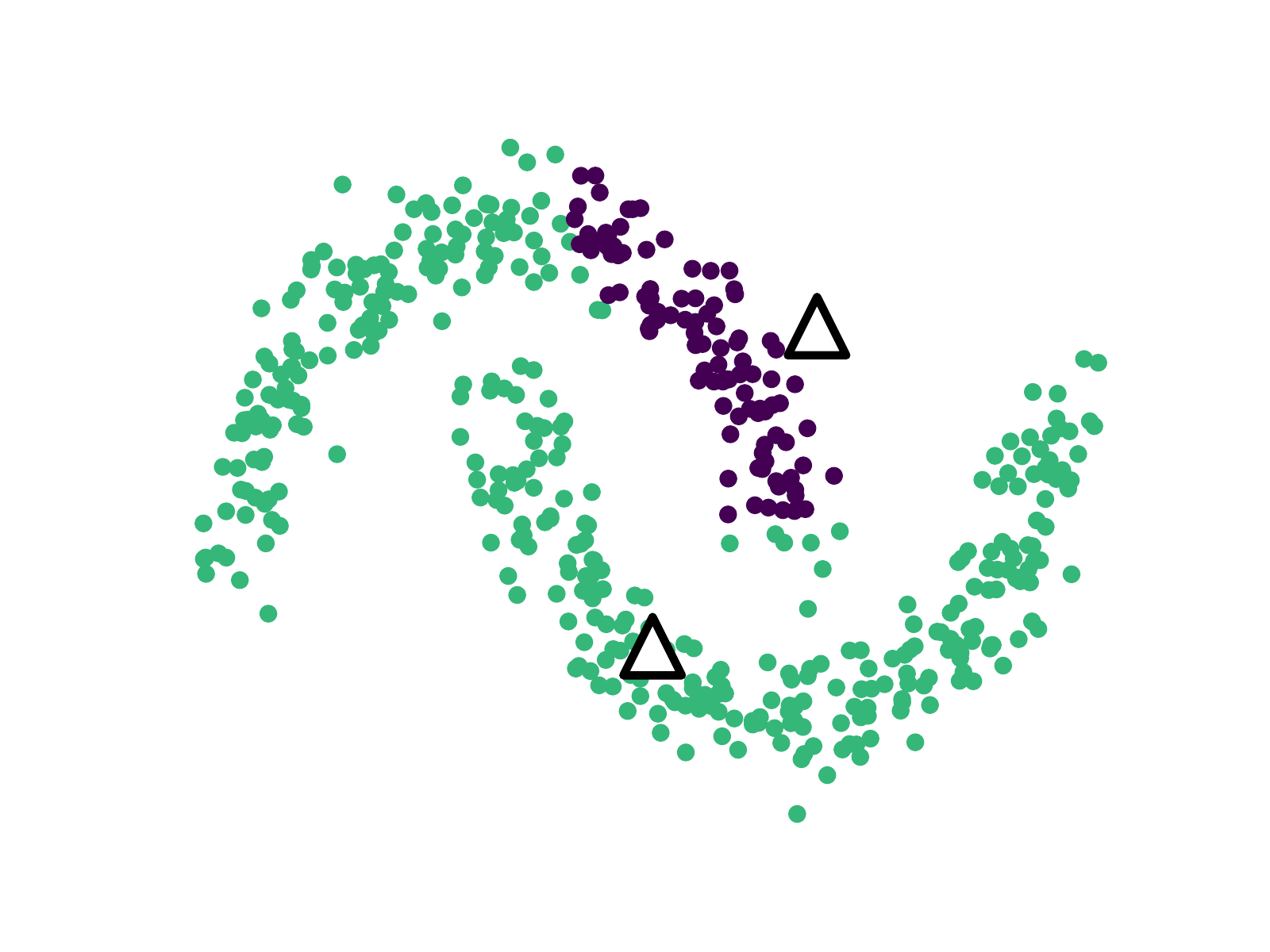}}
\subfloat[$\alpha=1$ with class priors]{\includegraphics[width=0.25\textwidth,clip=true,trim=50 50 50 50]{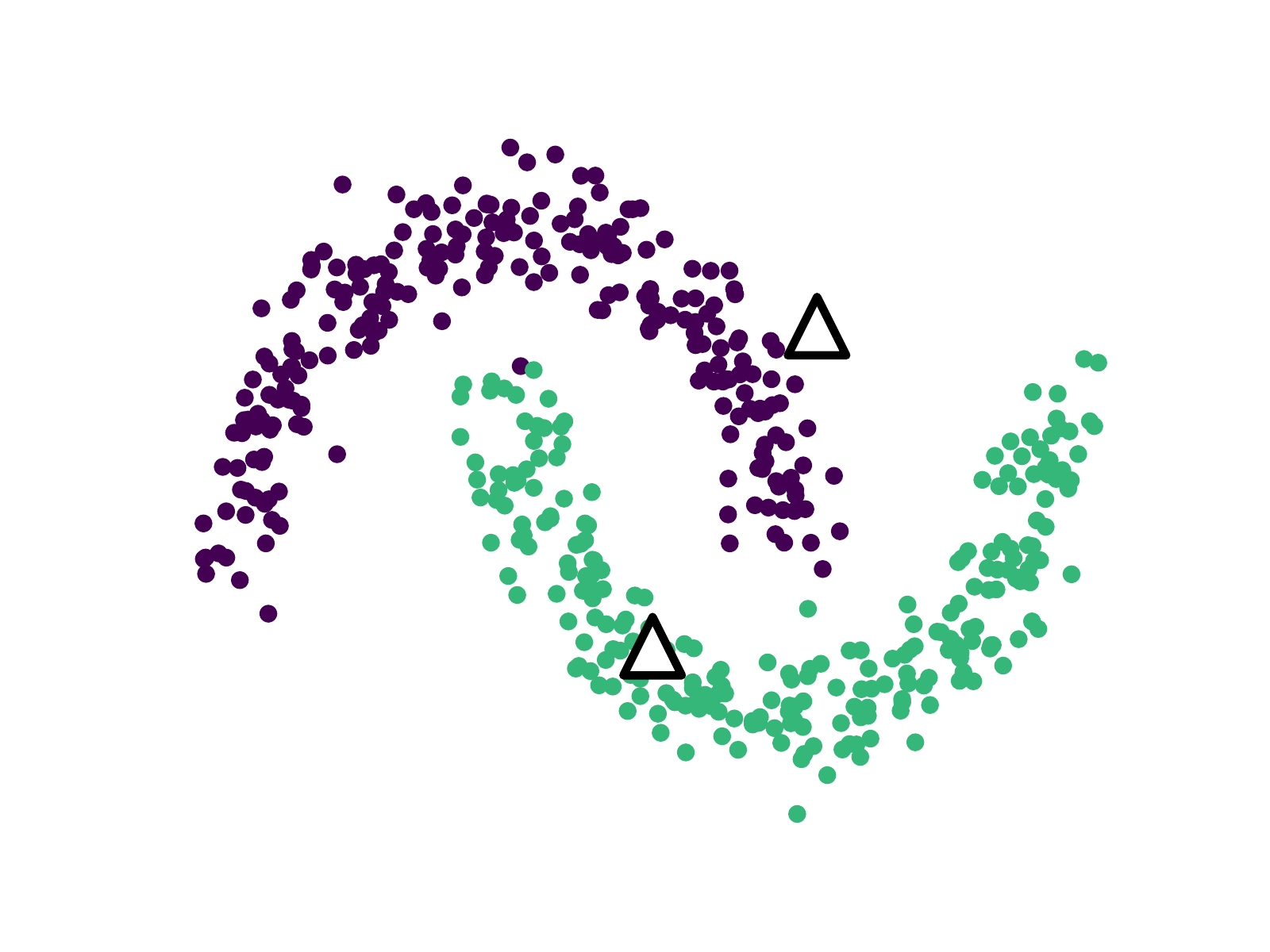}}\\
\caption{Example of semi-supervised learning with the density weighted $p$-eikonal equation on the two moons dataset. The $\triangle$ markers give the locations of the two labeled points in each example. We show different choices of density reweighting, and the addition of class priors on the right side.}
\label{fig:ssl_twomoons}
\end{figure}

As a preliminary toy example, we consider classification of the two-moons dataset in Figure \ref{fig:ssl_twomoons}. The two rows in the figure correspond to different choices of the labeled nodes. In each case we take one label per class and indicate its position with a $\triangle$. In the first row, the training nodes are both inliers in their respective clusters, and all choices of weighting exponents $\alpha$ give good classification, and the addition of class priors is not needed. In the second row, the training point for the upper half of the moon is an outlier for that cluster, and the lower cluster leaks over significantly for $\alpha=0,1$. We see on the right that this issue can be corrected with the addition of class prior information, to enforce the predicted classes to have the same size. It is also interesting to note in the second row that the reverse density weighting $\alpha=-1$ produces the correct classification without class priors. This is because the reverse density weighting brings the outlying training point closer to its cluster. In general, when we do not expect training points to be outliers, we do not recommend reverse density weighting in semi-supervised learning (and we do not observe good results with reverse density weighting with real data). In Section \ref{sec:numerics} we present more in depth results with semi-supervised learning on real data.

\section{State-constrained eikonal equations}
\label{sec:continuum_eikonal}

The continuum limit of the $p$-eikonal equation is a PDE called the state-constrained eikonal equation
\begin{equation}\label{eq:state_constrained_eikonal}
\left\{
\begin{aligned}
|\nabla u| &= f,&& \text{in } \Omega\setminus \Gamma \\
u &= 0,&& \text{on } \Gamma.
\end{aligned}
\right.
\end{equation}
The equation is \emph{state-constrained} because, as we shall see below, the solution represents a geodesic distance function to $\Gamma$, and the set $\Omega$ constrains the geodesic paths (e.g., the state).   Before proving the discrete to continuum convergence, which we do in Section \ref{sec:convergence}, we need to review some properties of the state-constrained eikonal equation \eqref{eq:state_constrained_eikonal}. 

Throughout this section we assume $f$ is positive and Lipschitz continuous, $\Omega\subset \R^d$ is an open, bounded and connected domain, with a $C^{1,1}$ boundary $\partial \Omega$, and $\Gamma \subset \Omega$ is a closed set where we specify the homogeneous Dirichlet boundary conditions. In particular, we are not explicitly specifying boundary conditions on $\partial\Omega$, and instead we consider the \emph{state constrained} problem \cite{capuzzo1990hamilton}.  For the reader unfamiliar with PDE theory, we note that the assumption that $\partial \Omega$ is $C^{1,1}$ is equivalent to assuming there is a radius $R$ such that at every boundary point $x\in \partial \Omega$, there exist balls of radius $R$ touching $x$ from inside and outside the domain \cite{lewicka2020domains}. This is also equivalent to assuming the \emph{reach}  of the boundary, as a submanifold of $\R^d$ is lower bounded by $R$, and that the unit normal vector to the boundary is Lipschitz with constant $\frac{1}{R}$.  Throughout this section we use the $C^{0,1}$ norm of a function, which is defined by
\[\|u\|_{C^{0,1}(\Omega)} = \|u\|_{L^\infty(\Omega)} + \Lip(u),\]
where $\|u\|_{L^\infty(\Omega)} = \max_{x\in \bar{\Omega}} |u(x)|$ and 
\[\Lip(u) = \sup_{\substack{x,y\in \bar{\Omega}\\x\neq y}} \frac{|u(x)-u(y)|}{|x-y|}.\]

We review the definition of viscosity solution for the state constrained problem here. 
\begin{definition}\label{def:viscosity_solution}
We say that $u\in C(\bar{\Omega})$ is a \emph{viscosity subsolution} of \eqref{eq:state_constrained_eikonal} if $u\leq 0$ on $\Gamma$ and if for each $x\in \Omega\setminus \Gamma$ and each $\phi \in C^\infty(\R^d)$ such that $u-\phi$ has a local maximum at $x$, we have
\begin{equation}\label{eq:viscosity_subsol}
|\nabla \phi(x)| \leq f(x).
\end{equation}

We say that $v\in C(\bar{\Omega})$ is a \emph{viscosity supersolution} of \eqref{eq:state_constrained_eikonal} if $v\geq 0$ on $\Gamma$ and if for each $x\in \bar{\Omega}\setminus \Gamma$ and each $\phi \in C^\infty(\R^d)$ such that $v-\phi$ has a local minimum at $x$, relative to $\bar{\Omega}$, we have
\begin{equation}\label{eq:viscosity_supersol}
|\nabla \phi(x)| \geq f(x).
\end{equation}

We say that $u$ is a \emph{viscosity solution} of \eqref{eq:state_constrained_eikonal} if $u$ is both a viscosity subsolution and a viscosity supersolution.
\end{definition}
Notice the key difference between the super and subsolution definitions is that in state constrained problems, we require the supersolution property to hold on the boundary $\partial \Omega$, but do not require the same in the  subsolution property. We give a simple justification for this fact in the connection with the variational interpretation below. For a reference on viscosity solutions of Hamilton-Jacobi equations and connections to optimal control, we refer the reader to \cite{bardi1997optimal}, while as a reference for state constrained Hamilton-Jacobi equations, we refer to \cite{capuzzo1990hamilton}.

We quote below a comparison principle for state constrained Hamilton-Jacobi equations.
\begin{theorem}[\cite{capuzzo1990hamilton}]\label{thm:comparison_prinicple}
If $u$ is a viscosity subsolution of \eqref{eq:state_constrained_eikonal} and $v$ is a viscosity supersolution, then $u\leq v$ on $\bar{\Omega}$.
\end{theorem}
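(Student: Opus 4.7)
The plan is to establish the comparison via a doubling-of-variables argument from viscosity solution theory, with an inward-shifting penalization due to Soner to accommodate the state constraint on $\partial\Omega$. I would argue by contradiction: suppose $M := \max_{\bar\Omega}(u - v) > 0$. Since $u \leq 0 \leq v$ on $\Gamma$, this maximum is attained at some point $x_0 \in \bar\Omega \setminus \Gamma$.

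First I would reduce to a strict subsolution. Because $H(p,x) = |p| - f(x)$ is convex and positively $1$-homogeneous in $p$, and $f \geq f_{\min} > 0$, an argument analogous to Lemma \ref{lem:comparison}(ii) shows that $u_\sigma := (1-\sigma) u + \sigma \varphi$ is a viscosity subsolution of $|\nabla w| \leq f - \sigma\lambda$ for any $\sigma \in (0,1)$, provided $\varphi \in C^1(\bar\Omega)$ is chosen with $|\nabla \varphi| \leq f - \lambda$ pointwise and $\varphi \leq 0$ on $\Gamma$ (e.g.\ a suitably scaled linear function). For $\sigma$ small enough, $\max_{\bar\Omega}(u_\sigma - v) > 0$ persists, and the strict inequality $f - \sigma\lambda < f$ is what will eventually drive the contradiction.

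Next comes the doubling of variables with an inward-shifting penalty. The essential obstacle is that the subsolution test is available only at interior points $x \in \Omega \setminus \Gamma$, whereas the supersolution test holds on all of $\bar\Omega \setminus \Gamma$; the argmax in the $x$-variable of a naive doubled functional could therefore land on $\partial\Omega$, where the subsolution test fails. Exploiting the $C^{1,1}$ regularity of $\partial\Omega$, I would construct a Lipschitz vector field $\eta : \bar\Omega \to \R^d$ with $\eta \cdot \nu < -c < 0$ on $\partial\Omega$ (where $\nu$ is the outward unit normal) and consider
\[ \Phi_{\varepsilon,\mu}(x,y) = u_\sigma(x) - v(y) - \frac{|x - y - \mu\, \eta(y)|^{2}}{2\varepsilon}, \qquad 0 < \mu \ll \sqrt{\varepsilon}. \]
The penalty vanishes only at $x = y + \mu\,\eta(y)$, so the maximizer $(x_\varepsilon, y_\varepsilon)$ satisfies $x_\varepsilon \approx y_\varepsilon + \mu\,\eta(y_\varepsilon)$, and the inward shift by $\mu\,\eta$ forces $x_\varepsilon \in \Omega$ even when $y_\varepsilon$ lies on $\partial\Omega$, which is exactly what licenses the subsolution inequality at $x_\varepsilon$. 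Standard compactness yields $(x_\varepsilon, y_\varepsilon) \to (x_0, x_0)$ and $|x_\varepsilon - y_\varepsilon|^{2}/\varepsilon \to 0$ as $\varepsilon, \mu \to 0$.

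Finally, the viscosity inequalities at $x_\varepsilon$ and $y_\varepsilon$, with $p_\varepsilon := (x_\varepsilon - y_\varepsilon - \mu\,\eta(y_\varepsilon))/\varepsilon$, give $|p_\varepsilon| \leq f(x_\varepsilon) - \sigma\lambda$ from the strict subsolution side and $|p_\varepsilon|(1 + O(\mu)) \geq f(y_\varepsilon)$ from the supersolution side (the $O(\mu)$ coming from differentiating $\eta$ in $y$). Combining these, using continuity of $f$, and sending $\varepsilon \to 0$ followed by $\mu \to 0$ produces $f(x_0) - \sigma\lambda \geq f(x_0)$, a contradiction. The hardest part will be the Soner-type construction: verifying that the inward shift genuinely forces $x_\varepsilon \in \Omega$ uniformly in $\varepsilon$, and that the $O(\mu)$ error does not overwhelm the strict gain $\sigma\lambda$ coming from the perturbation. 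The $C^{1,1}$ regularity of $\partial\Omega$ is precisely what supplies a Lipschitz inward-pointing vector field with the smoothness needed to control these errors.
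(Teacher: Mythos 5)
The paper does not actually prove Theorem \ref{thm:comparison_prinicple}; it is quoted from \cite{capuzzo1990hamilton}, and your sketch is essentially a reconstruction of the standard state-constraint comparison argument (Soner-type inward penalization) that underlies that reference, so the overall architecture is the right one: strictify the subsolution using convexity/homogeneity and $f\geq f_{min}>0$, double variables with an inward-shifted quadratic penalty so the $x$-maximizer is forced into $\Omega$ where the subsolution test is available, and play the two viscosity inequalities against each other.

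Two points in your write-up need repair. First, the parameter regime is stated backwards. At a maximizer of $\Phi_{\varepsilon,\mu}$, comparing with the competitor $x=y_\varepsilon+\mu\,\eta(y_\varepsilon)$ and using the Lipschitz bound on $u_\sigma$ gives $|x_\varepsilon-y_\varepsilon-\mu\,\eta(y_\varepsilon)|\leq C\Lip(u_\sigma)\,\varepsilon$, and interiority of $x_\varepsilon$ requires this mismatch to be dominated by the inward displacement, which is of size $\sim c\mu$; hence you need $\varepsilon\ll\mu$ (fix $\mu$, send $\varepsilon\to 0$ first, then $\mu\to 0$), whereas your stated constraint $\mu\ll\sqrt{\varepsilon}$ allows $\mu\leq\varepsilon$, in which case $x_\varepsilon$ can land on $\partial\Omega$ and the subsolution test is unavailable --- precisely the failure the shift was meant to exclude. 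Second, Definition \ref{def:viscosity_solution} requires $C^\infty$ test functions, so a merely Lipschitz field $\eta$ cannot sit inside the penalty; since $\partial\Omega$ is compact and $C^{1,1}$, mollify $\nabla d_{\partial\Omega}$ (or any uniformly transversal field) to obtain a smooth $\eta$ with $\eta\cdot\nu\leq -c<0$, which also legitimizes the $O(\mu)$ term coming from $D\eta$ in the supersolution inequality. Two smaller remarks: for the eikonal Hamiltonian with $f\geq f_{min}>0$ you can strictify simply with $u_\sigma=(1-\sigma)u$ (no auxiliary $\varphi$ is needed; this is the mechanism of Lemma \ref{lem:comparison}(iii) rather than (ii)), and you should record why $x_\varepsilon,y_\varepsilon\notin\Gamma$ for small parameters (the doubled maximum stays close to $\max_{\bar\Omega}(u_\sigma-v)>0$, while near $\Gamma$ the doubled functional is at most $o(1)$ by uniform continuity), since both viscosity tests are only available off $\Gamma$. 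With these adjustments the argument closes as you describe.
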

It follows from Theorem \ref{thm:comparison_prinicple} that solutions of \eqref{eq:state_constrained_eikonal} are unique. Existence can be obtained with the Perron method, or through the variational interpretation, which we discuss next.

\subsection{Variational interpretation}

The variational interpretation of \eqref{eq:state_constrained_eikonal} states that the solution of \eqref{eq:state_constrained_eikonal} is essentially a distance function on $\Omega$ to the set $\Gamma$, where distance is weighted by the positive function $f$. In particular, we first define the pairwise distance  
\begin{equation}\label{eq:distance}
d_f(x,y) := \inf\left\{ \int_0^1 f(\gamma(t))|\gamma'(t)| \, dt \,: \, \gamma\in C^1([0,1];\bar{\Omega}), \gamma(0)=x, \text{ and }  \gamma(1)=y   \right\}.
\end{equation}
The function $d_f:\bar{\Omega}\times \bar{\Omega} \to \R$ is a metric, and in particular, it satisfies the triangle inequality 
\[d_f(x,z) \leq d_f(x,y) + d_f(y,z).\]
We denote the distance function $d_f$ with $f\equiv 1$ as
\[d_\Omega(x,y) = d_1(x,y).\]
The function $d_\Omega :\bar{\Omega}\times \bar{\Omega} \to \R$ is the \emph{geodesic distance function} on $\Omega$. Associated with the geodesic distance function, we define geodesic balls by
\[B_\Omega(x,r) = \{y\in \bar{\Omega}\, : \, d_\Omega(x,y) \leq r\}.\]
We will have to frequently utilize the geodesic distance $d_\Omega(x,y)$ in place of the Euclidean distance $|x-y|$, and we will need to compare the two distances. Since the boundary $\partial\Omega$ is $C^{1,1}$, there exists a constant $C>0$, depending only on $\partial\Omega$ such that
\begin{equation}\label{eq:geodesic_euclidean}
|x-y| \leq d_\Omega(x,y) \leq |x-y| + C |x-y|^2 \ \ \text{for all } x,y\in  \bar{\Omega}.
\end{equation}
In fact, if the domain is \emph{convex}  then we have $d_\Omega(x,y)=|x-y|$, but we will not place such strong assumptions on the domain here. Associated with the geodesic distance, we also define the geodesic diameter
\[\diam(\Omega) = \max_{x,y\in \bar{\Omega}}d_{\Omega}(x,y).\]
The geodesic diameter is finite, since $\Omega$ is connected (and hence path connected).

Given the definition of the path distance function $d_f(x,y)$, we recall that the solution $u$ of \eqref{eq:state_constrained_eikonal} is given by the variational representation formula
\begin{equation}\label{eq:variational_interpretation}
u(x) = \min_{y\in \Gamma} d_f(x,y).
\end{equation}
\begin{theorem}\label{thm:existence_state_const}
The function $u$ defined in \eqref{eq:variational_interpretation} is the unique viscosity solution of \eqref{eq:state_constrained_eikonal}. 
\end{theorem}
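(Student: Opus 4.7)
The plan is to deduce uniqueness immediately from Theorem \ref{thm:comparison_prinicple}, so the task reduces to verifying that the function $u(x):=\min_{y\in\Gamma}d_f(x,y)$ defined by the variational formula \eqref{eq:variational_interpretation} is simultaneously a viscosity subsolution and supersolution of \eqref{eq:state_constrained_eikonal}. Two preliminary facts will drive everything: first, $u$ is continuous on $\bar{\Omega}$ (indeed Lipschitz with respect to the geodesic metric), because the triangle inequality for $d_f$ gives $|u(x)-u(z)| \leq d_f(x,z) \leq \|f\|_{L^\infty(\Omega)} d_\Omega(x,z)$; second, $u$ satisfies the dynamic programming principle
\begin{equation*}
u(x) = \inf_{\gamma}\left\{ u(\gamma(t)) + \int_0^t f(\gamma(s))|\gamma'(s)|\,ds \right\},
\end{equation*}
where the infimum is over admissible $C^1$ curves $\gamma:[0,t]\to \bar{\Omega}$ with $\gamma(0)=x$, and clearly $u \equiv 0$ on $\Gamma$.

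For the subsolution property, I would fix $x\in\Omega\setminus\Gamma$ and a test function $\phi$ with $u-\phi$ achieving a local maximum at $x$. Since $x$ lies in the open set $\Omega$, for any unit vector $v\in \R^d$ the straight segment $\gamma(s)=x-sv$ stays in $\Omega$ for $s\in[0,\tau]$ with $\tau$ sufficiently small. Substituting this curve into the DPP and using the local maximum condition $\phi(x)-\phi(x-\tau v)\leq u(x)-u(x-\tau v)$ yields $\phi(x)-\phi(x-\tau v)\leq \int_0^\tau f(x-sv)\,ds$; dividing by $\tau$, sending $\tau\to 0^+$, and using continuity of $f$ give $\nabla\phi(x)\cdot v \leq f(x)$, and optimizing over $v$ produces $|\nabla\phi(x)|\leq f(x)$.

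The supersolution property is the subtler half, since it must hold up to $\partial\Omega$. For $x\in \bar{\Omega}\setminus\Gamma$ and $\phi$ with $u-\phi$ attaining a local minimum at $x$ relative to $\bar{\Omega}$, I would take a near-optimal admissible curve $\gamma_\epsilon$ from $x$ toward a minimizing $y_\epsilon\in\Gamma$, parametrized by arc length so that $|\gamma_\epsilon'|\equiv 1$ and the cost satisfies $\int_0^{T_\epsilon} f(\gamma_\epsilon(s))\,ds < u(x)+\epsilon$. The DPP then yields $u(x) \geq u(\gamma_\epsilon(t)) + \int_0^t f(\gamma_\epsilon(s))\,ds - \epsilon$ for small $t>0$, and since $\gamma_\epsilon(t)\in\bar{\Omega}$ the local minimum condition gives $\phi(x) - \phi(\gamma_\epsilon(t)) \geq t f(x) + o(t) - \epsilon$. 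Extracting a subsequential limit of the unit tangents $\gamma_\epsilon'(0^+)$ as $\epsilon\to 0$ produces a direction $v\in\R^d$ with $|v|=1$ such that $-\nabla\phi(x)\cdot v \geq f(x)$, which forces $|\nabla\phi(x)|\geq f(x)$.

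The main obstacle will be the state-constrained supersolution at boundary points $x\in\partial\Omega$, which requires producing admissible curves $\gamma_\epsilon$ that remain inside $\bar{\Omega}$ while still nearly realizing $d_f(x,y_\epsilon)$. This is exactly where the $C^{1,1}$ regularity of $\partial\Omega$ enters: the interior ball condition together with the Lipschitz unit normal allow any near-optimal curve that briefly exits $\bar{\Omega}$ to be corrected by projection onto $\bar{\Omega}$ at a cost that is $o(t)$, preserving the leading-order term in the DPP. Once admissibility of test curves at the boundary is handled in this way, and continuity of $u$ up to $\partial\Omega$ is noted so that the min in \eqref{eq:variational_interpretation} is actually attained, the argument reduces to the computations sketched above, and Theorem \ref{thm:comparison_prinicple} closes out the uniqueness statement.
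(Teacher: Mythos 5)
Your proposal is correct in substance and follows the same broad strategy as the paper (uniqueness from Theorem \ref{thm:comparison_prinicple}, then verification that the value function is a viscosity sub- and supersolution via dynamic programming), but the implementation differs in a worthwhile way. The paper uses a sphere-based DPP, $u(x)=\min_{y\in\partial B(x,r)\cap\bar{\Omega}}\{u(y)+d_f(x,y)\}$, together with the expansion $d_f(x,y)=f(x)|x-y|+\cO(|x-y|^2)$ (this is where \eqref{eq:geodesic_euclidean}, hence the $C^{1,1}$ boundary, actually enters), and handles the state constraint for the supersolution by the simple observation that enlarging the max from $\partial B(x,r)\cap\bar{\Omega}$ to all of $\partial B(x,r)$ preserves the inequality. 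Your trajectory-based argument instead tests the subsolution with straight segments and the supersolution with near-optimal admissible curves, which only needs continuity of $f$ along the curve rather than the $d_f$ expansion. Two points to fix. First, the ``main obstacle'' you identify is a phantom: by the definition \eqref{eq:distance}, the curves competing in $d_f(x,y)$ are already required to lie in $\bar{\Omega}$, so a near-optimal curve is automatically admissible and no projection correction (and no $o(t)$ cost estimate, which you only assert) is needed; this is precisely why the supersolution property extends to $\partial\Omega$ in the state-constrained sense, and it is the one step in your sketch that is currently unjustified as written, even though it is unnecessary. Second, in the supersolution computation the parameters $\epsilon$ and $t$ must be coupled before passing to the limit: with $\epsilon$ fixed, the inequality $\phi(x)-\phi(\gamma_\epsilon(t))\geq tf(x)+o(t)-\epsilon$ gives nothing as $t\to0$; choose, say, $\epsilon=t^2$ and divide by $t$, and note that you do not even need a limit of unit tangents, since $\phi(x)-\phi(\gamma_\epsilon(t))\leq |\nabla\phi(x)|\,t+\cO(t^2)$ already yields $|\nabla\phi(x)|\geq f(x)$. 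Also, attainment of the minimum in \eqref{eq:variational_interpretation} is not needed; an $\epsilon$-minimizer suffices. With these repairs your argument is a complete alternative proof.
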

The proof of Theorem \ref{thm:existence_state_const} is standard in viscosity solution theory, and follows arguments in \cite{bardi1997optimal} closely. We include a proof in Appendix Section \ref{sec:proofs} for the interested reader.

\subsection{Lipschitz regularity}

The variational interpretation of the eikonal equation gives a simple proof of Lipschitzness of the solution $u$.\begin{lemma}\label{lem:lip}
Let $u\in C(\bar{\Omega})$ be the solution of \eqref{eq:state_constrained_eikonal}. Then $u$ is Lipschitz continuous and 
\[\Lip(u) \leq C\|f\|_{L^\infty(\Omega)},\]
where $C$ depends only on $\diam(\Omega)$ and $\partial\Omega$.
\end{lemma}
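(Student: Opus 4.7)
The plan is to exploit the variational formula \eqref{eq:variational_interpretation} together with the triangle inequality for the pseudo-metric $d_f$, and then compare the weighted geodesic distance $d_f$ with the Euclidean distance using the $C^{1,1}$ regularity of $\partial\Omega$ via the bound \eqref{eq:geodesic_euclidean}.

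First I would show the basic estimate $|u(x) - u(y)| \leq d_f(x,y)$ for all $x,y\in\bar\Omega$. By Theorem \ref{thm:existence_state_const}, $u(x) = \min_{z\in\Gamma} d_f(x,z)$. Fix $x,y\in\bar\Omega$ and let $z\in\Gamma$ be a minimizer for $u(y)$ (which exists because $\Gamma$ is closed and $d_f$ is continuous on the compact set $\bar\Omega$). By the triangle inequality for $d_f$,
\[
u(x) \leq d_f(x,z) \leq d_f(x,y) + d_f(y,z) = d_f(x,y) + u(y).
\]
Swapping the roles of $x$ and $y$ gives $|u(x) - u(y)| \leq d_f(x,y)$.

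Next, I would bound $d_f$ by $d_\Omega$ weighted by $\|f\|_{L^\infty(\Omega)}$. For any admissible path $\gamma$ in the definition \eqref{eq:distance}, $f(\gamma(t))\leq \|f\|_{L^\infty(\Omega)}$, so
\[
d_f(x,y) \leq \|f\|_{L^\infty(\Omega)}\, d_\Omega(x,y).
\]
Combined with \eqref{eq:geodesic_euclidean} and the elementary bound $|x-y|\leq\diam(\Omega)$, we obtain
\[
d_\Omega(x,y) \leq |x-y| + C|x-y|^2 \leq (1 + C\,\diam(\Omega))\,|x-y|,
\]
where $C$ is the constant from \eqref{eq:geodesic_euclidean}, depending only on $\partial\Omega$. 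Chaining the three inequalities yields
\[
|u(x) - u(y)| \leq (1 + C\,\diam(\Omega))\, \|f\|_{L^\infty(\Omega)}\, |x-y|,
\]
which is the desired bound with constant $(1 + C\diam(\Omega))$ depending only on $\diam(\Omega)$ and $\partial\Omega$.

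There is no real obstacle here: the argument is essentially a soft consequence of the variational formula and the $C^{1,1}$ regularity of the boundary. The only subtlety worth noting is that the factor $(1+C\diam(\Omega))$, rather than $1$, is needed because geodesic paths inside $\Omega$ must bend around the boundary and so can exceed the Euclidean distance; if $\Omega$ were convex we would simply have $d_\Omega(x,y)=|x-y|$ and $\Lip(u) \leq \|f\|_{L^\infty(\Omega)}$.
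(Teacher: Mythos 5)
Your proof is correct and follows essentially the same route as the paper: both pass through $|u(x)-u(y)|\leq d_f(x,y)\leq \|f\|_{L^\infty(\Omega)}d_\Omega(x,y)$ and then bound $d_\Omega$ by a multiple of the Euclidean distance using \eqref{eq:geodesic_euclidean}. The only cosmetic difference is that you absorb the quadratic term via $|x-y|\leq\diam(\Omega)$, whereas the paper splits into the cases $|x-y|\leq 1$ and $|x-y|\geq 1$; both give a constant depending only on $\diam(\Omega)$ and $\partial\Omega$.
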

\begin{proof}
Since $\Omega$ is open and connected, we have $d_\Omega(x,y)<\infty$ for all $x,y\in \bar{\Omega}$. By \eqref{eq:geodesic_euclidean}, there exists $\tilde{C}>0$ such that
\[d_\Omega(x,y) \leq \tilde{C}|x-y| \ \ \text{for all } x,y\in \bar{\Omega} \text{ with } |x-y|\leq 1.\]
For $|x-y|\geq 1$ we have
\[d_\Omega(x,y) \leq \diam(\Omega) \leq \diam(\Omega)|x-y|.\]
Therefore
\[d_\Omega(x,y) \leq C|x-y| \ \ \text{for all } x,y\in \bar{\Omega},\]
where $C = \max\{\tilde{C},\diam(\Omega)\}$.

Using Theorem \ref{thm:existence_state_const} and the dynamic programming principle we have
\[u(y)  \leq u(x) + d_f(x,y) \leq u(x) + \|f\|_{L^{\infty}(\Omega)}d_\Omega(x,y).\]
Swapping the roles of $x$ and $y$ yields
\[|u(x) - u(y)| \leq \|f\|_{L^{\infty}(\Omega)}d_\Omega(x,y) \leq C\|f\|_{L^\infty(\Omega)}|x-y|,\]
which completes the proof.
\end{proof}

\subsection{Domain perturbations}

In our discrete to continuum convergence theory in Section \ref{sec:convergence} below, we will need some results on the stability of the solution $u$ of \eqref{eq:state_constrained_eikonal} under perturbations in the domain $\Omega$.  Let us define the signed distance function to the boundary $\partial \Omega$ by
\[d_{\partial\Omega}(x) = 
\begin{cases}
	\text{dist}(x,\partial\Omega),&\text{if }x\in \bar{\Omega}\\
	-\text{dist}(x,\partial\Omega),&\text{otherwise}.
\end{cases}\]
For $\delta \in \R$ we also define
\begin{equation}\label{eq:enlarged}
	\Omega_\delta = \{x\in \R^d \, : \, d_{\partial\Omega}(x) > \delta\} \ \ \text{ and } \ \ \partial_\delta\Omega = \{x\in \R^d \, : \, d_{\partial \Omega}(x) \leq \delta\} .
\end{equation}

\begin{theorem}\label{thm:domain_perturbation}
For $\delta\in \R$ let $u_\delta\in C(\bar{\Omega_\delta})$ denote the viscosity solution of
\begin{equation}\label{eq:state_constrained_eikonal_delta}
\left\{
\begin{aligned}
|\nabla u_\delta| &= f,&& \text{in } \Omega_\delta\setminus \Gamma \\
u_\delta &= 0,&& \text{on } \Gamma,
\end{aligned}
\right.
\end{equation}
and let $u=u_0$ be the viscosity solution of \eqref{eq:state_constrained_eikonal}. There exists $C,c>0$, depending only on $\partial\Omega$ and $\dist(\Gamma,\partial\Omega)$, such that whenever $|\delta| \leq c$ the following hold.
\begin{enumerate}[{\rm (i)}]
\item $\Lip(u_\delta) \leq C\|f\|_{L^\infty(\Omega)}$, and
\item $\|u - u_\delta\|_{L^\infty(\Omega_{\delta_+})}\leq C f_{min}^{-1}\|f\|_{C^{0,1}(\Omega_{\delta_-})}\delta$, where $f_{min}=\min_{\Omega_{\delta_-}}f$.
\end{enumerate}
\end{theorem}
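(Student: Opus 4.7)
The strategy is to combine the variational representation \eqref{eq:variational_interpretation} with a smooth deformation of $\bar\Omega$ onto $\bar\Omega_\delta$ supplied by the $C^{1,1}$ regularity of $\partial\Omega$. Recall that the $C^{1,1}$ hypothesis is equivalent to a positive reach $R>0$, and the signed distance $d_{\partial\Omega}$ is $C^{1,1}$ on the collar $\{|d_{\partial\Omega}|<R\}$ with $|\nabla d_{\partial\Omega}|=1$. I would fix $c=\tfrac14\min\{R,\dist(\Gamma,\partial\Omega)\}$, so that for $|\delta|\le c$ one has $\Gamma\subset\Omega_\delta$, $\diam(\Omega_\delta)\le\diam(\Omega)+Cc$, and $\partial\Omega_\delta=\{d_{\partial\Omega}=\delta\}$ is $C^{1,1}$ with constants uniformly controlled by those of $\partial\Omega$. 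Part (i) then follows immediately from Lemma~\ref{lem:lip} applied inside $\Omega_\delta$, since the Lipschitz constant it produces depends only on $\diam(\Omega_\delta)$ and on the $C^{1,1}$ modulus of $\partial\Omega_\delta$, both of which are uniformly controlled for $|\delta|\le c$.

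For part (ii), the key tool is a family of bi-Lipschitz homeomorphisms $\Phi_\delta:\bar\Omega\to\bar\Omega_\delta$ defined by
\[\Phi_\delta(x)=x+\delta\,\eta\bigl(d_{\partial\Omega}(x)\bigr)\nabla d_{\partial\Omega}(x),\]
where $\eta\in C_c^\infty(\R)$ is a collar cutoff that equals $1$ on $\{|d_{\partial\Omega}|\le R/3\}$ and vanishes outside $\{|d_{\partial\Omega}|\le R/2\}$. The $C^{1,1}$ collar regularity gives $\|\Phi_\delta-I\|_{L^\infty}\le|\delta|$ and $\Lip(\Phi_\delta)\le 1+C|\delta|$, $\Phi_\delta$ is the identity on $\Gamma$ (since $\dist(\Gamma,\partial\Omega)>2c$), and for $|\delta|\le c$ the map is a genuine homeomorphism onto $\bar\Omega_\delta$. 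Moreover, the radial segment $x\leadsto\Phi_\delta(x)$ along $\nabla d_{\partial\Omega}$ stays inside $\bar\Omega\cap\bar\Omega_\delta$ by construction.

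With $\Phi_\delta$ in hand, the estimate (ii) falls out of the variational formula. Suppose $\delta>0$; then $\Omega_{\delta_+}=\Omega_\delta\subset\Omega$, $\Omega_{\delta_-}=\Omega$, and every competitor for $u_\delta(x)$ is also a competitor for $u(x)$, giving $u\le u_\delta$ on $\Omega_\delta$. For the reverse inequality, fix $x\in\Omega_\delta$ and choose a minimizing curve $\gamma:[0,1]\to\bar\Omega$ for $u(x)$ joining $x$ to some $y^\star\in\Gamma$ (which exists by Theorem~\ref{thm:existence_state_const}). Concatenating the short segment $x\leadsto\Phi_\delta(x)$ with $\tilde\gamma:=\Phi_\delta\circ\gamma$ produces a competitor for $u_\delta(x)$ in $\bar\Omega_\delta$ terminating at $\Phi_\delta(y^\star)=y^\star$. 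Inserting $\|\Phi_\delta-I\|_\infty\le\delta$, $\Lip(\Phi_\delta)\le1+C\delta$, $|f\circ\Phi_\delta-f|\le\Lip(f)\delta$, and the Euclidean-length bound $\int|\gamma'|\,dt\le u(x)/f_{min}$ into this competitor yields
\[u_\delta(x)-u(x)\le C\delta\Bigl(\|f\|_\infty+\tfrac{\|f\|_\infty\Lip(f)\diam(\Omega)}{f_{min}}\Bigr)\le Cf_{min}^{-1}\|f\|_{C^{0,1}(\Omega)}\,\delta\]
after absorbing $\|f\|_\infty$ and $\diam(\Omega)$ into the constant. The case $\delta<0$ is entirely symmetric: $u_\delta\le u$ on $\Omega=\Omega_{\delta_+}$ is automatic, and the opposite inequality is obtained by transporting a near-minimizing $\bar\Omega_\delta$-path for $u_\delta(x)$ back into $\bar\Omega$ via $\Phi_\delta^{-1}$.

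The main technical point is the construction of the family $\Phi_\delta$ and the verification that it is a homeomorphism \emph{onto} $\bar\Omega_\delta$ (not merely into it) with the claimed uniform Lipschitz constants; this is exactly where the reach lower bound and the $C^{1,1}$ regularity of $\partial\Omega$ are used, and it is also what forces the smallness condition $|\delta|\le c$. Once this is established, the rest of (ii) is careful bookkeeping with the variational formula and the Lipschitz bound on $f$, while (i) is an immediate consequence of the uniform control of $(\diam(\Omega_\delta),\partial\Omega_\delta)$ and Lemma~\ref{lem:lip}.
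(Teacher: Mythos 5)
Your proof is correct, but it takes a genuinely different route from the paper's. Part (i) is the same in both (uniform $C^{1,1}$ control of $\partial\Omega_\delta$ via the reach, then Lemma \ref{lem:lip}). For (ii), the paper uses the very same normal-flow-with-collar-cutoff deformation, but in a PDE way: it sets $w=u_\delta\circ\Phi_\delta$ on $\bar\Omega$, shows by the chain rule that $w$ is an almost-everywhere (hence, by convexity of the eikonal Hamiltonian, viscosity) subsolution of $|\nabla w|\le f+C\|f\|_{C^{0,1}}\delta$, compares it with the dilated supersolution $(1+Cf_{min}^{-1}\|f\|_{C^{0,1}}\delta)\,u$ via the state-constrained comparison principle (Theorem \ref{thm:comparison_prinicple}), and undoes the deformation with $\Lip(u_\delta)\delta$. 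You instead push (near-)minimizing paths forward by $\Phi_\delta$ and estimate $f$-lengths directly from the representation formula \eqref{eq:variational_interpretation}. Your route is more elementary, bypassing the a.e.-subsolution/viscosity argument and the comparison principle altogether, at the cost of verifying that $\Phi_\delta$ is bi-Lipschitz \emph{onto} $\bar\Omega_\delta$ (which you correctly identify as where the reach and the smallness of $|\delta|$ enter) and of working with minimizers: Theorem \ref{thm:existence_state_const} only gives an infimum, so you should run the argument with $\eps$-minimizing curves, which changes nothing. Two small repairs: your cutoff is supported in $\{d_{\partial\Omega}\le R/2\}$, so $\Phi_\delta$ fixes $\Gamma$ only if $\dist(\Gamma,\partial\Omega)\ge R/2$; since the constants may depend on $\dist(\Gamma,\partial\Omega)$, just shrink the collar width below that distance (the paper's proof needs the same implicit adjustment so that its $w$ vanishes on $\Gamma$). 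Finally, ``absorbing $\|f\|_\infty$ and $\diam(\Omega)$ into the constant'' at the end is the same cosmetic liberty the paper takes (its comparison step likewise produces a factor $\|u\|_{L^\infty}\le\|f\|_{L^\infty}\diam(\Omega)$); your displayed intermediate bound is in fact the honest, scale-correct estimate.
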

\begin{proof}
Since the boundary $\partial \Omega$ is $C^{1,1}$, the reach of $\partial\Omega$ is bounded below by a positive number $R>0$ (in fact, $\frac{1}{R}$ is the Lipschitz constant of the unit normal vector to the boundary). Hence, within the tube $\partial_{\frac{R}{2}}\Omega$, the signed distance function $d_{\partial \Omega}$ is uniformly $C^{1,1}$. Hence, the perturbed boundaries $\partial \Omega_\delta$ are uniformly $C^{1,1}$ for $|\delta| \leq \frac{R}{4}$. Invoking Lemma \ref{lem:lip} proves (i). We take $c\leq \frac{R}{2}$ smaller, if necessary, so that $\Gamma\subset \Omega_c$, and we assume $|\delta|\leq c$ for the rest of the proof. 

We will prove the case of $\delta>0$; the proof for $\delta<0$ is very similar. It is clear that $u \leq u_\delta$ on $\Omega_\delta$, since there are more restrictions on the feasible paths in the variational interpretation of $u_\delta$, compared to $u$. To prove the estimate in the other direction, that $u_\delta \leq u + C\delta$, we use the comparison principle Theorem \ref{thm:comparison_prinicple}, with a suitable extension of $u_\delta$ to  $\Omega$. 

We define the cutoff function
\begin{equation}\label{eq:zeta}
\zeta(x) =
\begin{cases}
1, & \text{if } 0 \leq d_{\partial \Omega}(x) \leq \frac{R}{4}\\
2-\frac{4}{R}d_{\partial \Omega}(x),& \text{if } \frac{R}{4}\leq d_{\partial \Omega}(x) \leq \frac{R}{2}\\
0,& \text{if } d_{\partial \Omega}(x) \geq \frac{R}{2}.
\end{cases}
\end{equation}
The function $\zeta$ is a Lipschitz cutoff functions near the boundary $\partial \Omega$. Since $|\nabla d_{\partial\Omega}|=1$ we have that $|\nabla \zeta| \leq \frac{4}{R}=C$, where $C$ depends only on $\partial\Omega$, at all points of differentiability of $\zeta$ in $\partial_{\frac{R}{2}}\Omega$. We now define the extended function $w\in C(\bar{\Omega})$ by 
\begin{equation}\label{eq:ueps}
	w(x) = u_\delta(x + \delta \,\zeta(x) \nabla d_{\partial \Omega}(x)). 
\end{equation}
To shed light on the definition of $w$, we note that $\nabla d_{\partial \Omega}$ gives a natural extension of the unit inward normal vector $\nu$ from the boundary $\partial\Omega$ to the tube $\partial_R \Omega$. Indeed, $\nabla d_{\partial \Omega}$ agrees with the unit inward normal vector on the boundary $\partial \Omega$, and in fact, $\nabla d_{\partial\Omega}(x)=\nu(x_*)$, where $x_*\in \partial \Omega$ is the closest point to $x$ from the boundary. Thus, we are simply stretching $u_\delta$ onto the larger domain $\Omega$.

We first check that $w$ is well-defined. If $x\in \Omega \setminus \Omega_{\frac{R}{4}}$,  then $\zeta(x)=1$ and so
\[d_{\partial \Omega}(x + \delta \,\zeta(x) \nabla d_{\partial \Omega}(x))= d_{\partial \Omega}(x + \delta \nabla d_{\partial \Omega}(x)) = d_{\partial \Omega}(x) + \delta > \delta.\]
Hence $x+ \delta \,\zeta(x) \nabla d_{\partial \Omega}(x)\in \Omega_\delta$ belongs to the domain of $u_\delta$. If $x\in \Omega_{\frac{R}{4}}$, then 
\[d_{\partial \Omega}(x + \delta \zeta(x) \nabla d_{\partial \Omega}(x)) = d_{\partial \Omega}(x) + \zeta(x) \delta \geq d_{\partial\Omega}(x)  > \frac{R}{4} \geq \delta,\]
and we reach the same conclusion. This establishes that $u_\delta$ is well-defined.

We will show that $w$ is a viscosity subsolution of a similar equation, and then apply the comparison principle. To do this, we will use the fact that for a Hamiltonian that is convex in the gradient (i.e., the eikonal Hamiltonian $|\nabla u|$), Lipschitz continuous almost everywhere subsolutions are also viscosity subsolutions (the same is not true for supersolutions). This is a standard fact in viscosity solution theory, whose proof can be found in standard references \cite{bardi1997optimal}. Thus, we can work directly with $\nabla w$ at points of differentiability, instead of using the test function definition of viscosity solutions.

Let $x\in \Omega$, and assume that $w$ and $\zeta$ are differentiable and $x$, and that $d_{\partial\Omega}$ is twice differentiable at $x$. Then we compute
\[\nabla w(x) = [I + \delta(\zeta \nabla^2 d_{\partial\Omega}(x) + \nabla \zeta(x) \otimes \nabla d_{\partial\Omega}(x)] \nabla u_\delta(x + \delta \zeta(x) \nabla d_{\partial\Omega}(x)).\]
Taking norms on both sides yields
\begin{align*}
|\nabla w(x)| &\leq \|I + \delta(\zeta \nabla^2 d_{\partial\Omega}(x) + \nabla \zeta(x) \otimes \nabla d_{\partial\Omega}(x)\| |\nabla u(x + \delta \zeta(x) \nabla d_{\partial\Omega}(x))|\\
&\leq \left(1 + \delta\left(\|\nabla^2 d_{\partial\Omega}(x)\| + |\nabla \zeta(x)| |\nabla d_{\partial\Omega}(x)| \right)\right) f(x + \delta \zeta(x) \nabla d_{\partial\Omega}(x)).
\end{align*}
Since $\nabla d_{\partial\Omega}$ is Lipschitz continuous in $\Omega_{\frac{R}{2}}$, we have a uniform bound on $\|\nabla^2 d_{\partial\Omega}\|$ at all points of differentiability. Thus, taking $C$ larger, if necessary, we have
\[|\nabla w(x)|  \leq (1 + C\delta)f(x + \delta \zeta(x) \nabla d_{\partial\Omega}(x)) \leq f(x) + C\|f\|_{C^{0,1}(\Omega)}\delta .\]
Set $v(x) = (1 + Cf_{min}^{-1}\|f\|_{C^{0,1}(\Omega)}\delta)u(x)$. Then $v$ is a viscosity solution of
\[|\nabla v(x)| \geq  (1 + Cf_{min}^{-1}\|f\|_{C^{0,1}(\Omega)}\delta)f(x) \geq f(x) + C\|f\|_{C^{0,1}(\Omega)}\delta.\]
By the comparison principle Theorem \ref{thm:comparison_prinicple} we have $w \leq v$, and hence
\[u_\delta(x + \delta \,\zeta(x) \nabla d_{\partial \Omega}(x)) \leq u(x) + Cf_{min}^{-1}\|f\|_{C^{0,1}(\Omega)}\delta \]
for all $x\in \Omega$. For $x\in \Omega_\delta$ we compute
\[u_\delta(x) \leq u_\delta(x + \delta \,\zeta(x) \nabla d_{\partial \Omega}(x)) + \Lip(u_\delta)\delta \leq u(x) + Cf_{min}^{-1}\|f\|_{C^{0,1}(\Omega)}\delta + C\|f\|_{L^\infty(\Omega)},\]
which completes the proof.
\end{proof}

\section{Discrete to continuum convergence}
\label{sec:convergence}

In this section we establish a continuum limit for the $p$-eikonal equation on a random geometric graph. In particular, we show that even though the $p$-eikonal equation does not correspond to a graph distance function, its continuum limit does in fact recover the geodesic distance. 

Let $x_1, x_2, \ldots, x_n$ be a sequence of $i.i.d$ random variables on $\Omega$ with density $\rho$. As in Section \ref{sec:continuum_eikonal} we assume that $\Omega\subset \R^d$ is open, bounded and connected with a $C^{1,1}$ boundary $\partial \Omega$. We assume the density $\rho$ is Lipschitz continuous and bounded above and below by positive constants
\begin{equation}
	\rho_{min}\leq \rho(x) \leq \rho_{max}
\end{equation}
for all $x \in \Omega$. The vertices of the graph are denoted by 
\begin{equation}
	\cX := \{x_1, x_2, \ldots, x_n\}.
\end{equation}
To define the edges in a random geometric graphs, we introduce a kernel $\eta : [0,\infty) \rightarrow [0,\infty)$, which is smooth and nonincreasing and satisfies $\eta(0) > 0$ and $\eta(t) = 0$ for $t > 1$. For notational convenience, we also assume $\eta$ has unit mass, so that
\[\int_{B(0,1)} \eta(|z|)\, dz =1.\]
For $\eps > 0$ define $\eta_\eps(t) := \frac{1}{\eps^d}\eta(\frac{t}{\eps})$ and set $\sigma_p := \int_{\R^d} \eta_\eps(|z|) |z_1|^p dz$. Note also that $\int_{B(0,\epsilon)} \eta_\epsilon(|z|)\, dz =1$. The normalized weight $w_{ij}$ between $x_i$ and $x_j$ is then given by
\begin{equation}\label{eq:edge_weights}
	w_{ij} = \frac{\eta_\eps(|x_i-x_j|)}{n \sigma_p \eps^{p}}.
\end{equation}  
Letting $G_{n,\epsilon}$ denote the graph with edge weights given in \eqref{eq:edge_weights}, the $p$-eikonal operator $\A_{G_{n,\epsilon},p}$ is defined in \eqref{eq:peikonal_op}, and is given by
\begin{equation}
	\cA_{G_{n,\eps},p} u(x) := \frac{1}{n \sigma_p \eps^{p}} \sum_{y \in \cX} \eta_\eps\big(|x-y|\big) \big(u(x)-u(y)\big)_+^p.
\end{equation}
For notational simplicity, we will write $\A_{n,\epsilon}=\A_{G_{n,\epsilon},p}$. 

For $p\geq 1$ we consider the $p$-eikonal equation with arbitrary right hand side $f$:
\begin{equation}\label{eq:eikonalGraphPDELP}
	\left\{\begin{aligned}
		\cA_{n,\eps} u(x) &= f(x)&&\text{if } x\in \cX \setminus \Gamma\\ 
		u(x) &=0&&\text{if } x\in \Gamma,
	\end{aligned}\right.
\end{equation}
where $\Gamma\subset \X$ is a subset of the graph nodes where the homogeneous Dirichlet condition is set. We need to assume $\Gamma$ is not too close to the topological boundary $\partial \Omega$. In particular, we assume
\begin{equation}\label{eq:Gamma}
\dist(\Gamma,\partial\Omega) \geq R
\end{equation}
where $R$ is the reach of $\partial \Omega$. Other than this, we place no assumptions on $\Gamma$. We compare this graph equation to its continuum counterpart, the state-constrained eikonal equation
\begin{equation}\label{eq:eikonalContinuumPDELP}
	\left\{\begin{aligned}
		\rho |\nabla u|^p &= f&&\text{in } \Omega \setminus \Gamma\\ 
		u &=0&&\text{on }\Gamma.
	\end{aligned}\right.
\end{equation}
We recall from Section \ref{sec:continuum_eikonal} that the solution of the state-constrained eikonal equation \eqref{eq:eikonalContinuumPDELP} is given by the geodesic distance function $u(x) = d_g(x,\Gamma)$, where $g=\rho^{-\frac{1}{p}}f^{\frac{1}{p}}$. 

Our main discrete to continuum convergence results are broken into two theorems, which are summarized below. In the theorem statements we write  $u_{n,\eps}$ for the solution \eqref{eq:eikonalGraphPDELP}.
\begin{theorem}\label{thm:main1}
There exists $C,c>0$ such that for $\epsilon$ sufficiently small and any $0 < \lambda \leq 1$ we have
\begin{equation}\label{eq:main1}
\P\left(\max_{x\in \X}(d_{g}(x,\Gamma) - u_{n,\epsilon}(x)) \leq C( \sqrt{\epsilon} + \lambda)\right) \geq 1-2n\exp(-cn\epsilon^d \lambda^2).
\end{equation}
\end{theorem}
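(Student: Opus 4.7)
The plan is to establish \eqref{eq:main1} by the standard viscosity-solution plus maximum-principle strategy, in the spirit of \cite{yuan2020continuum,flores2022algorithms}: construct a smooth approximate subsolution $u^h$ of \eqref{eq:eikonalContinuumPDELP} that dominates $u(x) := d_g(x,\Gamma)$, show via consistency and concentration that $u^h$ is also an approximate subsolution of the graph equation \eqref{eq:eikonalGraphPDELP} on a high-probability event, and then invoke Lemma~\ref{lem:comparison}(iii) to conclude $u \leq u^h \leq u_{n,\eps}$ up to a controlled error. Since $u$ is only Lipschitz, regularization is needed before one can Taylor-expand the nonlocal operator. First I would invoke Theorem~\ref{thm:domain_perturbation} to pass to the shrunken domain $\Omega_\delta$ with $\delta \asymp \eps$, at a cost of only $\|u-u_\delta\|_\infty = O(\eps)$; this shift also decouples the truncated neighborhoods of graph vertices near $\partial\Omega$ from the interior consistency analysis. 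I would then sup-convolve $u_\delta$ at spatial scale $h$ to produce a semiconvex function $u^h \geq u_\delta$ with $\|u^h - u_\delta\|_\infty = O(h)$, $\Lip(u^h) \leq \Lip(u_\delta)$, and the semiconvexity bound $u^h(x) - u^h(y) \leq \nabla u^h(x)\cdot(x-y) + |x-y|^2/(2h)$; since the Hamiltonian $\rho|\nabla u|^p - f$ is convex in the gradient and Lipschitz in $x$, sup-convolution preserves the viscosity subsolution property up to an $O(h)$ loss, so $\rho|\nabla u^h|^p \leq f + Ch$ almost everywhere in $\Omega_\delta$.

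The consistency step is the technical heart. For $x\in \X\setminus \Gamma$, the semiconvexity bound together with the elementary inequality $(a+b)_+^p \leq a_+^p + C_p(a_+^{p-1}|b|+|b|^p)$ gives
\[\A_{n,\eps} u^h(x) \leq \frac{1}{n\sigma_p\eps^p} \sum_{j=1}^n \eta_\eps(|x-x_j|)\bigl(\nabla u^h(x)\cdot(x-x_j)\bigr)_+^p + O(\eps/h).\]
After the change of variables $z = (x_j-x)/\eps$, the rotational symmetry of $\eta$ and the normalization of $\sigma_p$ imply that the conditional expectation of the random sum equals $\rho(x)|\nabla u^h(x)|^p + O(\eps)$. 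The summands have both $L^\infty$-norm and variance of order $(n\eps^d)^{-1}$, so Bernstein's inequality gives concentration at rate $\lambda$ with failure probability at most $2\exp(-cn\eps^d\lambda^2)$ at each fixed $x$; a union bound over the $n$ vertices of $\X$ produces the prefactor $2n$ in \eqref{eq:main1}. Combining with the subsolution inequality for $u^h$, on the resulting good event one has $\A_{n,\eps} u^h \leq f + C(h+\eps/h+\lambda)$ throughout $\X\setminus\Gamma$.

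The final step is comparison. Since the graph Hamiltonian $H(q,x) = \sum_j w_{ji}(q_j)_+^p - f(x)$ is positively $p$-homogeneous in $q$, Lemma~\ref{lem:comparison}(iii) applies. With $\mu := C(h+\eps/h+\lambda)/f_{\min}$, the function $v := (1+\mu)^{1/p}u_{n,\eps} + Ch$ satisfies $\A_{n,\eps}v = (1+\mu)f \geq \A_{n,\eps}u^h$ on $\X\setminus \Gamma$ (using that $\A_{n,\eps}$ is invariant under constant shifts and is $p$-homogeneous), and $v = Ch \geq u^h$ on $\Gamma$, since the sup-convolution estimate gives $u^h \leq Ch$ wherever $u_\delta = 0$. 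Comparison therefore yields $u^h \leq v$ on $\X$, so combining $u \leq u_\delta \leq u^h$ with the boundedness of $u_{n,\eps}$ from Theorem~\ref{thm:peikonal_wellposed} gives $u - u_{n,\eps} \leq C(h+\eps/h+\lambda) + Ch$. Choosing $h = \sqrt\eps$ balances $h+\eps/h \asymp \sqrt\eps$ and produces the claimed bound $C(\sqrt\eps+\lambda)$.

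The main technical obstacle I anticipate is the sup-convolution behavior near the inner boundary $\Gamma$, where $u$ has a gradient discontinuity: some care is required to verify $u^h \leq Ch$ on $\Gamma$ uniformly and to justify that the subsolution loss from sup-convolution is truly $O(h)$ in a neighborhood of $\Gamma$ despite this kink; I would handle this by using Lipschitzness of $u_\delta$ together with the explicit formula for sup-convolution to bound the maximizing $y$ in $u^h(x) = \sup_y\{u_\delta(y)-|x-y|^2/(2h)\}$ by $|x-y|\leq h\Lip(u_\delta)$. A secondary subtlety is that $u^h$ is differentiable only almost everywhere (Alexandrov); this is harmless since graph vertices are almost surely in general position, but formally one works with the superdifferential in place of $\nabla u^h$ at non-smooth points.
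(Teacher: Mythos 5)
Your proposal takes a genuinely different route from the paper: you regularize the continuum solution by sup-convolution and push it through the graph comparison principle (Lemma~\ref{lem:comparison}(iii)), whereas the paper never regularizes at all—it doubles variables with a quadratic penalty $\frac{\alpha}{2}|x-y|^2$, applies pointwise consistency (Lemmas~\ref{discreteNonlocal}--\ref{nonLocalLocal}) to the smooth quadratic test function at the maximum point, and rules out interior maxima by inserting the factor $(1-\theta)$, with the enlarged domain $\Omega_{-\delta}$ and the optimization $\alpha=1/\sqrt{\eps}$ producing the same $C(\sqrt{\eps}+\lambda)$ rate. Your skeleton (semiconvexity gives the one-sided expansion, Bernstein plus a union bound over the $n$ vertices gives the $2n\exp(-cn\eps^d\lambda^2)$ failure probability, homogeneity of $\cA_{n,\eps}$ lets you absorb the consistency error into a factor $(1+\mu)^{1/p}$) is viable, but as written it has two concrete gaps. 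The main one is at the \emph{outer} boundary $\partial\Omega$, not at $\Gamma$: you pass to the \emph{shrunken} domain $\Omega_\delta$ with $\delta\asymp\eps$ and claim $\rho|\nabla u^h|^p\le f+Ch$ ``almost everywhere in $\Omega_\delta$,'' but you then need this gradient bound at \emph{every} graph vertex, including vertices within distance $O(h)=O(\sqrt{\eps})\gg\delta$ of $\partial\Omega$; there the standard ``sup-convolution preserves subsolutions'' argument fails, because the maximizing point $y^*$ can land on (or outside) the boundary of the region where $u_\delta$ is a viscosity subsolution, and shrinking the domain only worsens this. The fix is the opposite perturbation: work on the \emph{enlarged} domain $\Omega_{-\delta}$ with $\delta\gtrsim h$ (cost $O(\delta)$ by Theorem~\ref{thm:domain_perturbation}), so that all maximizers $y^*$ are interior and the subsolution test is legitimate—this is exactly why the paper's doubled-variable maximum point $x_\alpha$ is arranged to lie in $\Omega_{-\delta}$—or else argue directly from the global $d_g$-Lipschitz bound $|u(x)-u(y)|\le d_g(x,y)$ together with \eqref{eq:geodesic_euclidean} to show $|x-y^*|/h\le g(x)+Ch$. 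By contrast, the obstacle you flag at $\Gamma$ is not one: the distance function is globally $1$-Lipschitz with respect to $d_g$, hence an a.e.\ (and, by convexity of the Hamiltonian, viscosity) subsolution across $\Gamma$; the kink there is of minimum type and harmless for subsolutions, and your bound $u^h\le Ch$ on $\Gamma$ is fine.

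The second gap is in the last step: you invoke ``boundedness of $u_{n,\eps}$ from Theorem~\ref{thm:peikonal_wellposed}'' to convert $u^h\le(1+\mu)^{1/p}u_{n,\eps}+Ch$ into $u-u_{n,\eps}\le C(h+\eps/h+\lambda)$. But with the weights \eqref{eq:edge_weights}, the upper bound in Theorem~\ref{thm:peikonal_wellposed} is $\max_\X f^{1/p}\,d_{G^{1/p}}(\cdot,\Gamma)\sim (n\eps^d)^{1/p}\to\infty$, so it gives no uniform bound; and Theorem~\ref{thm:discreteSolLip} would only help under the extra scaling $n\eps^{p+d}\lesssim 1$ (and with a worse probability prefactor), neither of which Theorem~\ref{thm:main1} assumes. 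The repair is immediate: rearrange the comparison as $u_{n,\eps}\ge(1+\mu)^{-1/p}(u^h-Ch)\ge u^h-C\mu\|u^h\|_{L^\infty}-Ch$ and use boundedness of the \emph{continuum} solution (hence of $u^h$), which is all you need. A minor further point: your $O(\eps/h)$ consistency error as stated also requires a bound on the weighted degree $\frac1n\sum_j\eta_\eps(|x-x_j|)$; it is cleaner to run a single Bernstein estimate on $\eta_\eps(|x-x_j|)(u^h(x)-u^h(x_j))_+^p$, exactly as in Lemma~\ref{discreteNonlocal}, and only then apply the semiconvex expansion inside the expectation. With the domain perturbation reversed (and $\delta\asymp\sqrt{\eps}$) and the boundedness step corrected, your argument goes through and gives an alternative proof in the same spirit, trading the paper's doubling-of-variables machinery for sup-convolution plus the discrete comparison principle.
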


\begin{theorem}\label{thm:main2}
There exists $C,c>0$ such that for $\epsilon$ sufficiently small and any $0 < \lambda \leq 1$ we have
\begin{equation}\label{eq:main2}
\P\left(\max_{x\in \X}(u_{n,\epsilon}(x) - d_g(x,\Gamma)) \leq C\left( \sqrt{\epsilon} + \left( n\epsilon^{p+d}\right)^{\frac{1}{p}}+\lambda\right)\right) \geq 1-3n^2\exp(-cn\epsilon^d \lambda^2).
\end{equation}
\end{theorem}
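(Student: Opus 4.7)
The plan is to split $\X$ into the graph boundary layer
\begin{equation*}
\X_{\mathrm{bd}} = \bigl\{x \in \X \setminus \Gamma : |x - y| \leq \epsilon \text{ for some } y \in \Gamma\bigr\}
\end{equation*}
and its complement, and to handle each regime separately. On $\X_{\mathrm{bd}}$, retaining only a single $\Gamma$-neighbor term in $\cA_{n,\epsilon} u_{n,\epsilon}(x) = f(x)$ and using the lower bound $\eta_\epsilon(|x-y|) \geq \eta(1)\epsilon^{-d}$ on the kernel yields the deterministic estimate $u_{n,\epsilon}(x) \leq C (n\epsilon^{p+d})^{1/p}$; combined with $d_g(x,\Gamma) \geq 0$, this is the required bound on $\X_{\mathrm{bd}}$ and is where the $(n\epsilon^{p+d})^{1/p}$ term in the statement originates.

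For the interior nodes, the plan is to apply the discrete comparison principle (Theorem \ref{thm:strict_comp}) against a test function $\phi$ that dominates $u_{n,\epsilon}$ on $\Gamma \cup \X_{\mathrm{bd}}$ and is a graph supersolution on $\X \setminus (\Gamma\cup\X_{\mathrm{bd}})$. For small parameters $\tau, \delta, r > 0$ to be optimized, let $w$ solve the state-constrained eikonal equation $\rho|\nabla w|^p = (1+\tau)f$ on $\Omega_{-\delta} \setminus \Gamma$ with $w = 0$ on $\Gamma$; the assumption \eqref{eq:Gamma} gives $\Gamma \subset \Omega_{-\delta}$ for small $\delta$, and Theorem \ref{thm:domain_perturbation} together with the variational formula \eqref{eq:variational_interpretation} yields $\|w - d_g\|_{L^\infty(\Omega)} \leq C(\tau + \delta)$ and $\Lip(w) \leq L$. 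Regularize by inf-convolution,
\begin{equation*}
\phi(x) = \inf_{y \in \bar\Omega_{-\delta}}\Bigl( w(y) + \tfrac{1}{2r}|x-y|^2\Bigr) + C_0 (n\epsilon^{p+d})^{1/p},
\end{equation*}
with $C_0$ chosen so that $\phi \geq u_{n,\epsilon}$ on $\X_{\mathrm{bd}}$ by the bound above. Standard properties of inf-convolutions for convex Hamilton--Jacobi equations give that $\phi$ is Lipschitz, semiconcave with one-sided Hessian bound $1/r$, and at every point of differentiability $x$ inherits the viscosity supersolution inequality $\rho(x)|\nabla\phi(x)|^p \geq (1+\tau - Cr)f(x)$ (via transfer of the minimizer $y^*$ with $|x-y^*|\leq Lr$ and Lipschitzness of $\rho$ and $f$).

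The technical core is a graph consistency estimate: for semiconcave $\phi$, a first-order Taylor expansion using the one-sided second-derivative control, the inequality $|a_+^p - b_+^p| \leq p\max(|a|,|b|)^{p-1}|a-b|$ valid for $p \geq 1$, and Bernstein's inequality applied to the i.i.d.\ sum over $y \in \X$ combine to give
\begin{equation*}
\cA_{n,\epsilon}\phi(x) \geq \rho(x)|\nabla\phi(x)|^p - C(\epsilon/r + \lambda)
\end{equation*}
uniformly in $x \in \X\setminus(\Gamma\cup\X_{\mathrm{bd}})$, on an event of probability at least $1 - O(n^2\exp(-cn\epsilon^d\lambda^2))$. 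Combined with the previous step, $\phi$ is a graph supersolution there provided $\tau \geq C(r + \epsilon/r + \lambda)$. Optimizing with $r = \tau = \delta = \sqrt{\epsilon}$ gives $\phi \leq d_g + C(\sqrt{\epsilon} + (n\epsilon^{p+d})^{1/p})$ on $\X$, and the discrete comparison principle (with $\Gamma \cup \X_{\mathrm{bd}}$ in the role of the boundary set) then produces the stated bound.

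The main obstacle, in my view, is the consistency estimate itself: the $(\cdot)_+^p$ nonlinearity combined with the one-sided smoothness of the inf-convolution means that for $y$ in directions nearly perpendicular to $\nabla\phi(x)$, the sign of $\phi(x) - \phi(y)$ can flip under the $|x-y|^2/r$ semiconcavity remainder, producing a thin-shell error that must be carefully absorbed into the $\epsilon/r$ term. A secondary subtlety is that the Bernstein concentration must be made uniform over $x \in \X$ and over the local orientations of $\nabla \phi$, which is the source of the $n^2$ prefactor in the stated probability.
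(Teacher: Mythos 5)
The single-neighbor ``spike'' bound near $\Gamma$ is a correct observation (it is exactly how the paper's barrier in Lemma \ref{localSuperSol_1}(ii) produces the $(n\epsilon^{p+d})^{1/p}$ term), but the core of your plan has a genuine gap: the claimed uniform consistency estimate $\cA_{n,\epsilon}\phi(x)\geq \rho(x)|\nabla\phi(x)|^p-C(\epsilon/r+\lambda)$ cannot be established at graph nodes within distance $\epsilon$ of the topological boundary $\partial\Omega$, and these nodes lie in $\X\setminus(\Gamma\cup \X_{\mathrm{bd}})$ since your layer surrounds only $\Gamma$. At such a node the sum defining $\cA_{n,\epsilon}\phi(x)$ only sees samples in $\Omega\cap B(x,\epsilon)$, and whenever the descent direction of $d_g(\cdot,\Gamma)$ has a component toward $\partial\Omega$ (which happens for nonconvex $\Omega$, where constrained geodesics hug the boundary), an order-one fraction of the upwind mass is cut off, so $\cA_{n,\epsilon}\phi$ can drop below $\rho|\nabla\phi|^p$ by a constant factor rather than by $O(\epsilon/r)$. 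This is precisely why Lemma \ref{nonLocalLocal} is one-sided in its boundary case (ii) -- and that one-sided bound points in the wrong direction for your supersolution argument. Working with the enlarged domain $\Omega_{-\delta}$ does not help, because there are no graph points outside $\Omega$ to restore the missing mass. The paper's proof is structured around exactly this obstruction: it doubles variables against the state-constrained solution $u_\delta$ on the \emph{shrunken} domain $\Omega_\delta$ with $\delta\geq 2\epsilon$, so that pointwise consistency is only ever invoked at points $x_\alpha$ with $B(x_\alpha,\epsilon)\subset\Omega$, the state-constraint supersolution property on $\partial\Omega_\delta$ handles contact with the shrunken boundary, and the nodes within $\delta$ of $\partial\Omega$ are controlled by the a priori discrete Lipschitz estimate of Theorem \ref{thm:discreteSolLip} -- which is also the paper's source of the $(n\epsilon^{p+d})^{1/p}$ term.

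Two further points would need repair even away from $\partial\Omega$. First, $\eta(1)=0$ (the kernel is continuous and vanishes for $t>1$), so the lower bound $\eta_\epsilon(|x-y|)\geq\eta(1)\epsilon^{-d}$ is vacuous; you need $\eta\geq\mu>0$ on $[0,r]$ and a layer of width $r\epsilon$, as in Lemma \ref{localSuperSol_1}. Second, and more seriously, the inf-convolution at scale $r=\sqrt{\epsilon}$ fails to be a supersolution within distance of order $\Lip(w)\,r\sim\sqrt{\epsilon}$ of $\Gamma$: there the minimizer $y^*$ lands on $\Gamma$ and $|\nabla\phi|$ degrades below $g$, while your deterministic bound on $u_{n,\epsilon}$ only reaches nodes within $r\epsilon$ of $\Gamma$. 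On the intermediate annulus of width $\sim\sqrt{\epsilon}$ neither half of your comparison argument applies, and patching it essentially requires a barrier/chaining estimate of the cone-with-spike type (Lemma \ref{localSuperSol_1} and Theorem \ref{thm:discreteSolLip}) -- i.e., the very ingredient of the paper's proof the proposal was trying to bypass. As written, the proposal therefore does not prove the theorem.
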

\begin{remark}\label{rem:constants}
The constants in both theorems depend on $\diam(\Omega)$, the $C^{1,1}$ bound on $\partial\Omega$ (or, equivalently, the reach of $\partial \Omega$), the kernel $\eta$ (in particular $\eta(0)$, and the constants $r\in (0,1]$ and $\mu>0$ defined in Lemma \ref{localSuperSol_1}), the dimension $d$, $\rho_{min},\rho_{max},\Lip(\rho), f_{min},f_{max}, \Lip(f)$, and $p$. The dependence on $p$ is uniform over compact sets, that is if $p\in [1,p_0]$, the constants depend only on $p_0$.
\end{remark}

\begin{remark}\label{rem:barrier}
In order for the result of Theorem \ref{thm:main2} to be non-vacuous, we require that 
\begin{equation}\label{eq:scaling}
n\epsilon^{d+p}\ll 1.
\end{equation}
For the probabilities in both Theorems \ref{thm:main1} and \ref{thm:main2} to be close to one, for arbitrarily small choices of $\lambda>0$, we require $n\epsilon^d \gg \log(n)$.  Combining these two restrictions leads to the following restrictions on $\epsilon$:
\begin{equation}\label{eq:eps_scaling}
\left( \frac{\log(n)}{n}\right)^{\frac{1}{d}}\ll \epsilon \ll \left(\frac{1}{n}\right)^{\frac{1}{p+d}}.
\end{equation}
Since $p\geq 1$, there is always room between the upper and lower bounds to select a feasible $\epsilon$. In general, we believe the upper bound is tight. Figure \ref{fig:epsrange} shows the solution of the $p$-eikonal equation with $\Gamma = \{0\}$, giving a cone-like function, for different choices of $p$ and $\eps$. When the upper bound is violated, we see a spike forming at $\Gamma$, and the solution will fail to attain the boundary condition $u=0$  on $\Gamma$ in the continuum limit (note that this spike is utilized in our Lipschitz estimate in Section \ref{sec:Alip}). We do  expect, however, that the upper bound in \eqref{eq:eps_scaling} can be relaxed if we place more assumptions on the boundary nodes $\Gamma$, so that isolated points need not be considered. In particular, if $\Gamma$ contains all points within distance $\epsilon$ of $\partial\Omega$, then the upper bound can be dropped using arguments from \cite{calder2020rates}. 

Finally, we note that there is some precedent for bandwidth restrictions like \eqref{eq:eps_scaling} in the analysis of $p$-Laplacian semi-supervised learning in the same setting of arbitrarily low label rates. In \cite{slepcev2019analysis} it was shown that $p$-Laplacian semi-supervised learning at low label rates requires the much more restrictive condition 
\[\left( \frac{\log(n)}{n}\right)^{\frac{1}{d}}\ll \epsilon \ll \left(\frac{1}{n}\right)^{\frac{1}{p}},\]
which is only true when $p>d$.
\end{remark}

\begin{figure}[!t]
\centering
																						      \hspace{2mm}
\subfloat[$\eps=0.03, p=1$]{\includegraphics[width=0.3\textwidth]{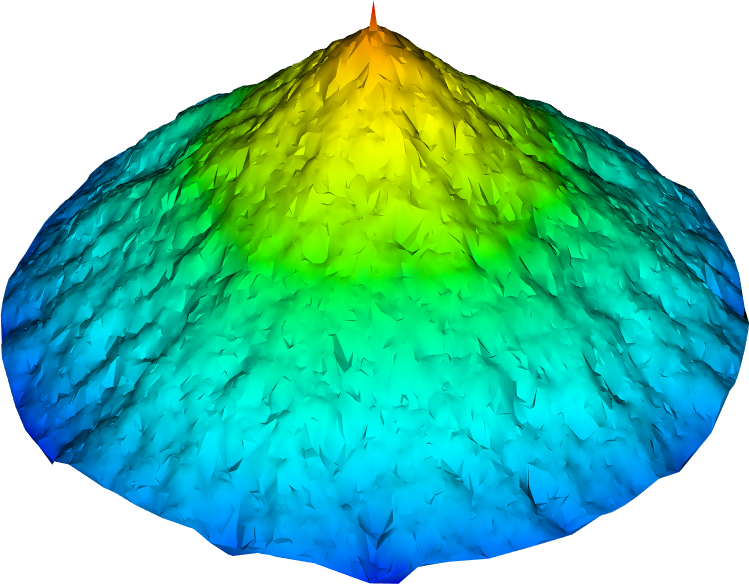}}\hspace{3mm}
\subfloat[$\eps=0.06, p=1$]{\includegraphics[width=0.3\textwidth]{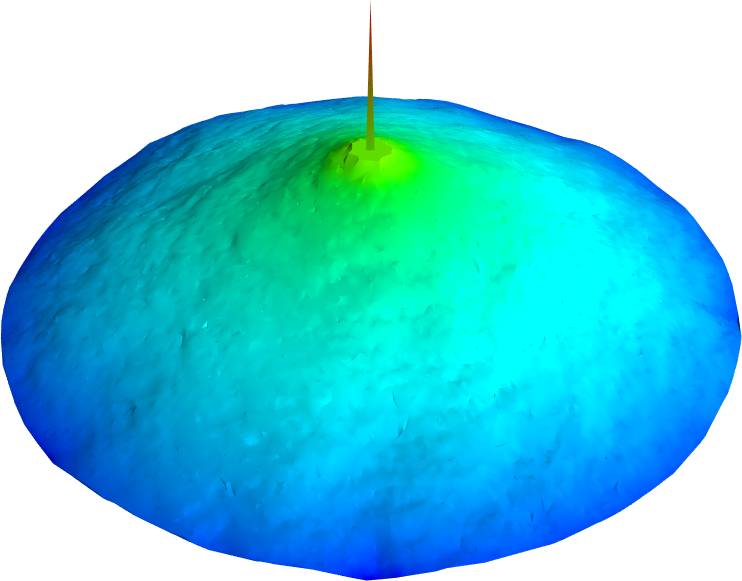}}\hspace{3mm}
\subfloat[$\eps=0.09, p=1$]{\includegraphics[width=0.3\textwidth]{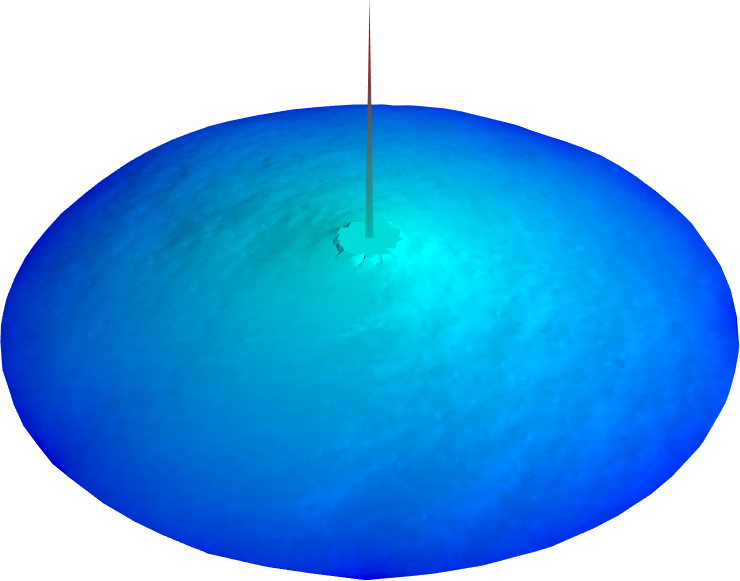}}\hspace{3mm}\\
																						      \hspace{2mm}
\subfloat[$\eps=0.03, p=2$]{\includegraphics[width=0.3\textwidth]{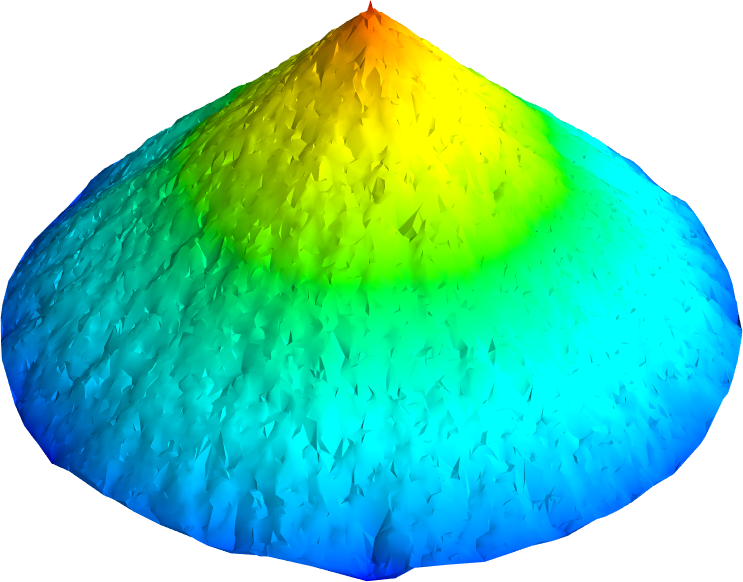}}\hspace{3mm}
\subfloat[$\eps=0.06, p=2$]{\includegraphics[width=0.3\textwidth]{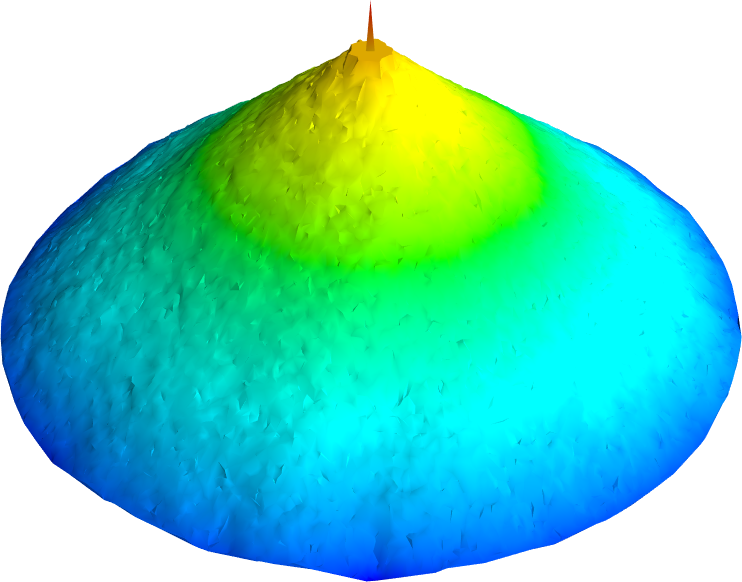}}\hspace{3mm}
\subfloat[$\eps=0.09, p=2$]{\includegraphics[width=0.3\textwidth]{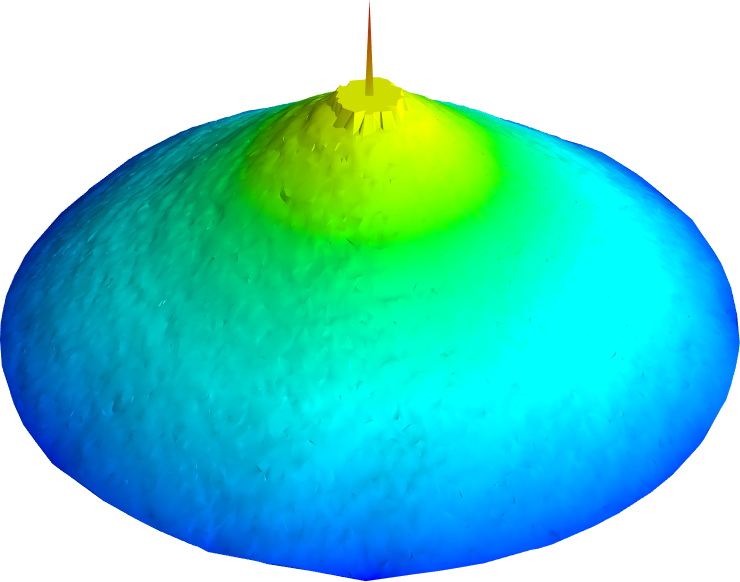}}\hspace{3mm}\\
																						      \hspace{2mm}
\subfloat[$\eps=0.03, p=4$]{\includegraphics[width=0.3\textwidth]{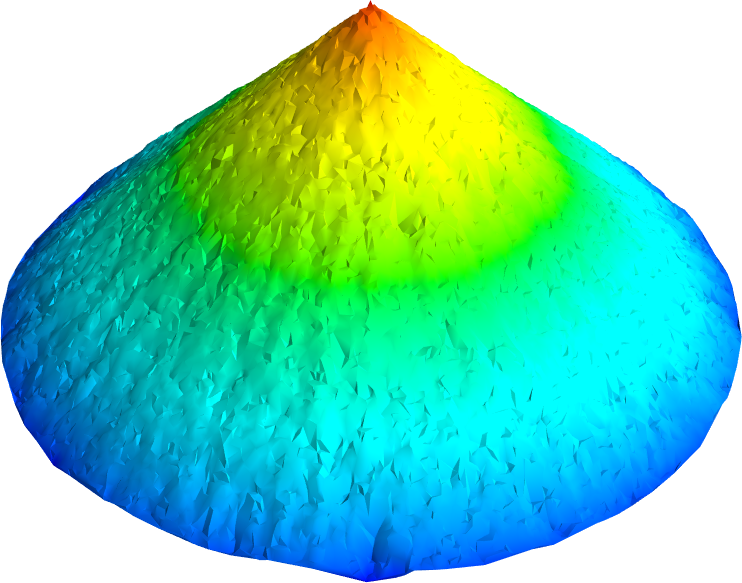}}\hspace{3mm}
\subfloat[$\eps=0.06, p=4$]{\includegraphics[width=0.3\textwidth]{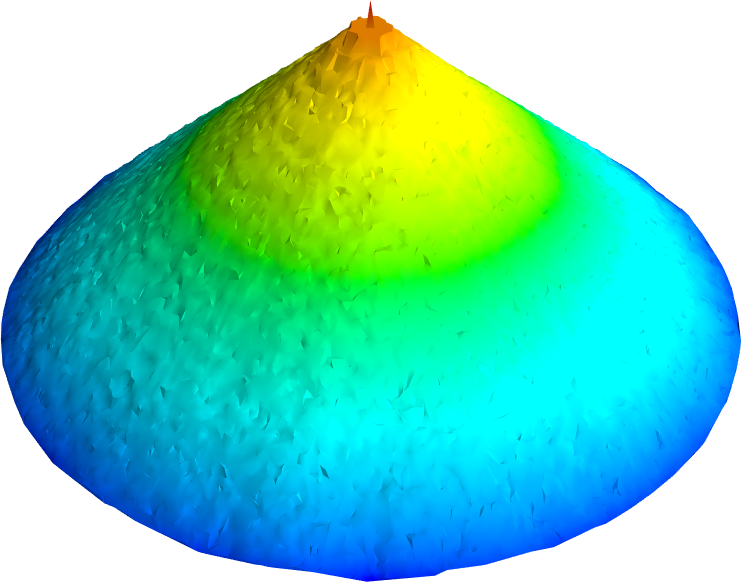}}\hspace{3mm}
\subfloat[$\eps=0.09, p=4$]{\includegraphics[width=0.3\textwidth]{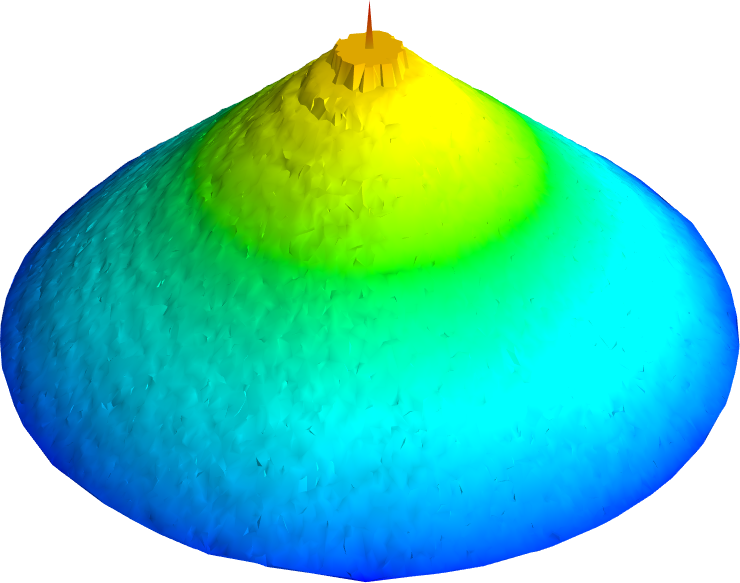}}\hspace{3mm}
\caption{For $\epsilon=0.03$, each node in the graph has on average approximately $20$ neighbors, while for $\epsilon=0.06$ and $\epsilon=0.09$ each node has on average $70$ and $160$ neighbors, respectively. The cones are inverted for a better viewing angle.}
\label{fig:epsrange}
\end{figure}

\begin{remark}\label{rem:rate}
If we choose $\lambda= \sqrt{\epsilon}$ in Theorems \ref{thm:main1} and \ref{thm:main2}, then we obtain that the convergence rate
\[\max_{x\in \X}|u_{n,\epsilon}(x) - d_g(x,\Gamma)| \leq C\left( \sqrt{\epsilon} + \left( n\epsilon^{p+d}\right)^{\frac{1}{p}}\right)\]
holds with probability at least $1 - 5n^2\exp\left( -cn\epsilon^{d+1}\right)$. If we additionally choose $\epsilon$ so that $\left(n\epsilon^{p+d}\right)^{\frac{1}{p}} \leq  \sqrt{\epsilon}$, that is, we make the restriction
\[\epsilon \leq \left( \frac{1}{n}\right)^{\frac{2}{p+2d}},\]
then the rate
\[\max_{x\in \X}|u_{n,\epsilon}(x) - d_g(x,\Gamma)| \leq C \sqrt{\epsilon}\]
holds with the same probability. Without any further assumptions on the boundary set $\Gamma$, we expect the $\cO(\sqrt{\epsilon})$ rate is optimal, in this general setting.
\end{remark}

\subsection{Pointwise consistency}

The first ingredient for a discrete to continuum limit result is pointwise consistency for the operator $\cA_{n,\epsilon}$. As usual, pointwise consistency passes through a nonlocal operator, which in this case has the form 
\begin{equation}
	\label{eq:nonlocalOperator}
	\cA_\eps u(x) := \frac{1}{\sigma_p \eps^{p}} \int_{\Omega} \eta_\eps(|x-y|) \big(u(x)-u(y)\big)_+^p \rho(y) dy,
\end{equation}
for $u\in L^\infty(\Omega)$. Pointwise consistency is obtained in two steps, the first step (Lemma \ref{discreteNonlocal}) passes from the discrete operator $\cA_{n,\epsilon}$ to the nonlocal counterpart $\cA_\epsilon$ via concentration of measure, while the second (Lemma \ref{nonLocalLocal}) uses Taylor expansion to relate the nonlocal operator to the eikonal equation.
\begin{lemma}(Discrete to nonlocal)
	\label{discreteNonlocal}
	Let $u : \bar{\Omega} \rightarrow \R$ be Lipschitz continuous and $n\geq 2$. Then for any $\lambda >0$ we have that
	\begin{equation}
		\label{eq:discreteToNonlocal}
		\max_{x \in \cX} |\cA_{n,\eps}u(x) - \cA_\eps u(x)| \leq \eta(0)\rho_{max}\Lip(u)^p \lambda 
	\end{equation}
holds with probability at least 
\begin{equation}\label{eq:prob}
1 - 2n\exp\left( \frac{-\eta(0)\sigma_p^2\rho_{max}n\epsilon^d \lambda^2}{4\left( 1 + \frac{1}{3}\sigma_p\eta(0) \lambda\right)}\right).
\end{equation}
\end{lemma}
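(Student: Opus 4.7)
The plan is a standard concentration-of-measure argument via Bernstein's inequality, followed by a union bound over the $n$ graph vertices. For fixed $x\in\bar\Omega$, introduce the i.i.d.\ random variables
\[
Z_j(x) := \frac{1}{\sigma_p\eps^p}\eta_\eps(|x-x_j|)\bigl(u(x)-u(x_j)\bigr)_+^p, \qquad j=1,\dots,n,
\]
so that $\cA_{n,\eps}u(x) = \frac{1}{n}\sum_{j=1}^n Z_j(x)$ and, since the $x_j$ have density $\rho$ on $\Omega$, $\E[Z_j(x)] = \cA_\eps u(x)$. The sum $\sum_j Z_j(x)$ is therefore a sum of i.i.d.\ variables with mean $n\cA_\eps u(x)$, which is the natural set-up for Bernstein.

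First I would bound $Z_j$ uniformly and bound its variance. Using $\eta\leq\eta(0)$ and Lipschitzness of $u$ (with the key observation that $\eta_\eps(|x-x_j|)$ is supported on $|x-x_j|\le\eps$, so the increment $(u(x)-u(x_j))_+$ is at most $\Lip(u)\eps$), we get $|Z_j(x)|\le M := \eta(0)\Lip(u)^p/(\sigma_p\eps^d)$. For the variance, the sharper bound $(u(x)-u(x_j))_+^p\le\Lip(u)^p|x-x_j|^p$ combined with $\eta_\eps^2\le\tfrac{\eta(0)}{\eps^d}\eta_\eps$ and $\int\eta_\eps(|z|)|z|^{2p}\,dz\le\eps^{2p}$ yields
\[
\operatorname{Var}(Z_j(x)) \le \E[Z_j(x)^2] \le \frac{\eta(0)\rho_{\max}\Lip(u)^{2p}}{\sigma_p^2\eps^d} =: v.
\]
Bernstein's inequality then gives, for any $t>0$,
\[
\P\!\left(\bigl|\cA_{n,\eps}u(x)-\cA_\eps u(x)\bigr|\ge t\right)\le 2\exp\!\left(-\frac{n t^2}{2(v+Mt/3)}\right).
\]
Choosing $t=\eta(0)\rho_{\max}\Lip(u)^p\lambda$ substitutes the $M$ and $v$ above into the exponent and, after cancellations, produces exactly the probability estimate appearing in \eqref{eq:prob}.

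To pass from a fixed $x$ to all $x\in\cX$, I would argue as follows. Fix an index $i$ and condition on the value of $x_i$; then the remaining points $\{x_j\}_{j\ne i}$ are i.i.d.\ with density $\rho$ and independent of $x_i$, so the Bernstein bound above applies with $x=x_i$ (the self-term $Z_i(x_i)=0$ contributes nothing, and the missing $j=i$ index only shifts the empirical mean by $O(1/n)$, which is harmless). Taking a union bound over $i=1,\dots,n$ produces the factor $2n$ in front of the exponential and yields the claim.

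The argument is mostly routine; the only mildly subtle point is the handling of the diagonal term in the uniform-over-$\cX$ step, i.e.\ justifying that conditioning on $x_i=x$ really does leave the remaining $Z_j(x)$ i.i.d.\ with the correct mean. Once that is in place, Bernstein does the rest, and the precise form of the denominator $1+\tfrac{1}{3}\sigma_p\eta(0)\lambda$ in \eqref{eq:prob} comes out of the substitutions for $M$ and $v$.
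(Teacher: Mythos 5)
Your proposal is correct and follows essentially the same route as the paper: Bernstein's inequality at a fixed point with the same bounds on $|Z_j|$ and $\operatorname{Var}(Z_j)$, the same choice $t=\eta(0)\rho_{\max}\Lip(u)^p\lambda$, and then conditioning on $x_i$ plus a union bound over the $n$ vertices. The only bookkeeping point is that the fixed-$x$ Bernstein bound yields a denominator $2\bigl(1+\tfrac13\sigma_p\eta(0)\lambda\bigr)$, and the factor $4$ in \eqref{eq:prob} arises precisely from replacing $n$ by $n-1\geq n/2$ in the conditioning step (the paper makes this explicit), which is consistent with your remark that the missing diagonal index is harmless.
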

\begin{remark}\label{rem:parametersPC}
We note that to ensure the probability in \eqref{eq:prob} is close to $1$, when $\lambda>0$ can be arbitrarily small,  we require that
\[n\epsilon^d \gg \log(n).\]
This is the same restriction required for graph connectivity in random geometric graphs \cite{penrose2003random} (more correctly, the restriction for graph connectivity is $n\epsilon^d \geq C\log(n)$ for a large enough constant $C$). In contrast, pointwise consistency for graph Laplacians requires a more restrictive length scale restriction of the form $n\epsilon^{d+2}\gg \log(n)$ (see, e.g., \cite{calder2020rates}), which does not cover smaller bandwidths $\epsilon$ where the graph is still connected. The reason for this difference is that graph Laplacians are second order differential operators, and are normalized by an additional factor of $\epsilon$ to obtain meaningful continuum limits. 
\end{remark}

\begin{proof}[Proof of Lemma \ref{discreteNonlocal}]
	Fix $x \in \Omega$ and let $Y_i := \eta_\eps(|x-x_i|)(u(x)-u(x_i))_+^p$
	so that
	\begin{equation}
		\cA_{n,\eps} u(x) = \frac{1}{\sigma_p \eps^{p}} \frac{1}{n} \sum_{i=1}^{n} Y_i
	\end{equation}
	Then we compute
	\begin{equation}
		\E(Y_i) = \int_{\Omega} \eta_\eps(|x-y|)\big(u(x)-u(y)\big)_+^p \rho(y) dy
	\end{equation}
	and
	\begin{align}\label{eq:sigmaBernIneq}
			\sigma^2 \leq \E(Y_i^2) &= \int_{\Omega\cap B(x,\eps)} \eta_\eps^2(|x-y|)\big(u(x)-u(y)\big)_+^{2p} \rho(y) dy \\
			&\leq \rho_{\max}\Lip(u)^{2p} \eps^{2p} \int_{B(x,\eps)} \eta^2_\eps(|x-y|) dy \notag \\
			&\leq \eta(0)\rho_{max}\Lip(u)^{2p} \eps^{2p-d} \int_{B(x,\epsilon)} \eta_\epsilon(|x-y|) dy \notag \\
			&=\eta(0) \rho_{max}\Lip(u)^{2p} \eps^{2p-d}. \notag
	\end{align}
	We also compute
	\begin{equation}
		\begin{split}
			|Y_i| =  \eta_\eps(|x-x_i|)|(u(x)&-u(x_i))_+^p|
			\leq \eta_\eps(|x-x_i|) |u(x)-u(x_i)|^p \leq \eta(0)
			\Lip(u)^p \eps^{p-d} 
		\end{split}
	\end{equation}
We now invoke Bernstein's inequality (see Appendix \ref{sec:conc})  to obtain
	\begin{equation}
		\Big|\frac{1}{n} \sum_{i=1}^n Y_i -\int_{\Omega} \eta_\eps(|x-y|) \big(u(x)-u(y)\big)_+^p \rho(y) dy  \Big| \leq t
	\end{equation}
	holds with probability at least 
\[1 - 2\exp\left( \frac{-nt^2}{2\eta(0)\Lip(u)^p \eps^{p-d}\left(\rho_{max} \Lip(u)^p \eps^p + \frac{t}{3}\right)}\right).\]
Setting $t =\eta(0)\rho_{max}\sigma_p \Lip(u)^p \eps^p \lambda$ for a new parameter $\lambda>0$ we obtain 
\[|\cA_{n,\eps}u(x) - \cA_\eps u(x)| \leq \eta(0)\rho_{max}\Lip(u)^p \lambda \]
with probability at least 
\[1 - 2n\exp\left( \frac{-\eta(0)\sigma_p^2\rho_{max}n\epsilon^d \lambda^2}{2\left( 1 + \frac{1}{3}\sigma_p\eta(0) \lambda\right)}\right).\]

The rest of proof is completed by conditioning on $x_i$ and then applying a union bound.  Indeed, conditioning on $x_i=x$, the other $n-1$ points form an \emph{i.i.d.}~sequence, and we note that
\[\cA_{n,\eps} u(x_i) := \frac{1}{n \sigma_p \eps^{p}} \sum_{j\neq i} \eta_\eps\big(|x-x_j|\big) \big(u(x_i)-u(x_j)\big)_+^p\]
is exactly in the form considered above, except the sum is over $n-1$ \emph{i.i.d.}~random variables, instead of $n$.
Thus, we can apply the argument above, replacing $n$ with $n-1$, and then union bounding over $i=1,\dots,n$. To simplify the probability we use the bound $n-1\geq n/2$ for $n\geq 2$.
\end{proof}
\begin{remark}\label{rem:conditioning}
The conditioning argument used at the end of the proof is standard in probability, and we will use it implicitly in subsequent proofs.
\end{remark}

We now turn to comparing the nonlocal operator $\cA_\eps$ to its continuum counterpart $\rho|\nabla u|^p$.  
\begin{lemma}(Nonlocal to local)
	\label{nonLocalLocal}
	There exists $C > 0$ such that for every $\epsilon>0$, $p\geq 1$ and $\phi \in C^2(\R^d)$, the following hold.
\begin{enumerate}[{\rm (i)}]
\item If $\dist(x,\partial\Omega) \geq \epsilon$ then 
\[\Big| \cA_\eps \phi(x) - \rho(x)|\nabla \phi(x)|^p \Big| \leq C M\epsilon.\]
where
\[M := \sigma_p^{-1}\|\rho\|_{C^{0,1}}\left(p(\Lip(\phi) + \|\phi\|_{C^2}\epsilon)^{p-1}  \|\phi\|_{C^2} + 1\right).\]
\item If $\dist(x,\partial\Omega) < \epsilon$ then 
\[ \cA_\eps \phi(x) - \rho(x)|\nabla \phi(x)|^p  \leq C M\epsilon.\]
\end{enumerate}

\end{lemma}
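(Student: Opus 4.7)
My plan is to prove both (i) and (ii) by a standard Taylor expansion of $\phi$ around $x$, combined with an elementary bound that handles the $(\cdot)_+^p$ nonlinearity. For (ii) I will simply note that restricting integration from $\R^d$ to $\Omega$ can only decrease the (nonnegative) integrand, so the one-sided estimate transfers automatically, and the pointwise expansion need only be done once, in the interior regime (i).

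Fix $x\in \Omega$ and, for $y$ in the support ball $B(x,\epsilon)$, write $\phi(x)-\phi(y) = -\nabla\phi(x)\cdot(y-x) + r(x,y)$ with $|r(x,y)|\le \tfrac12\|\phi\|_{C^2}|y-x|^2$. Setting $a(x,y)=-\nabla\phi(x)\cdot(y-x)$, the elementary inequality $|(a+b)_+^p - a_+^p|\le p(|a|+|b|)^{p-1}|b|$ (proved by applying the MVT to the $C^1$ function $t\mapsto t_+^p$, $p\ge 1$) gives
\[
\bigl|(\phi(x)-\phi(y))_+^p - a(x,y)_+^p\bigr| \;\le\; C\,p\bigl(\Lip(\phi)+\|\phi\|_{C^2}\epsilon\bigr)^{p-1}\|\phi\|_{C^2}\,|y-x|^2,
\]
uniformly for $y\in B(x,\epsilon)$. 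Combining this with $|\rho(y)-\rho(x)|\le \Lip(\rho)|y-x|$ and using $\int_{B(0,\epsilon)}\eta_\epsilon(|z|)|z|^{p+1}dz \le C\,\sigma_p\,\epsilon^{p+1}$, I can replace $(\phi(x)-\phi(y))_+^p$ by $a(x,y)_+^p$ and $\rho(y)$ by $\rho(x)$ inside $\cA_\epsilon\phi(x)$ at a total cost of $CM\epsilon$, where $M$ is the quantity defined in the statement.

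The remaining task is to evaluate
\[
\frac{\rho(x)}{\sigma_p\epsilon^p}\int_{B(0,\epsilon)}\eta_\epsilon(|z|)\bigl(-\nabla\phi(x)\cdot z\bigr)_+^p\,dz
\]
(after the change of variables $z=y-x$, valid in case (i) because $B(x,\epsilon)\subset\Omega$). By rotational invariance of the kernel $\eta_\epsilon(|z|)$ one may assume $\nabla\phi(x)=|\nabla\phi(x)|e_1$, and by the reflection symmetry $z_1\mapsto -z_1$ one has $\int \eta_\epsilon(|z|)(-z_1)_+^p\,dz = \tfrac12\int \eta_\epsilon(|z|)|z_1|^p\,dz$, which by the definition of $\sigma_p$ is $\tfrac12\sigma_p\epsilon^p$ up to the normalization convention used by the authors (the constant $\sigma_p$ is chosen precisely so that this calculation returns $\sigma_p\epsilon^p |\nabla\phi(x)|^p$). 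Hence the leading term equals $\rho(x)|\nabla\phi(x)|^p$, proving (i).

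For (ii), when $\dist(x,\partial\Omega)<\epsilon$ the ball $B(x,\epsilon)$ sticks outside $\Omega$, so the identity above fails because the rotational symmetry is broken. However, since the integrand of $\cA_\epsilon\phi(x)$ is pointwise nonnegative, extending $\rho$ by a $C^{0,1}$ extension to $\R^d$ (with the same Lipschitz norm, up to a constant) and enlarging the domain of integration from $\Omega$ to $\R^d$ only increases the value, so
\[
\cA_\epsilon\phi(x) \;\le\; \frac{1}{\sigma_p\epsilon^p}\int_{\R^d}\eta_\epsilon(|x-y|)\bigl(\phi(x)-\phi(y)\bigr)_+^p\,\widetilde\rho(y)\,dy,
\]
and the argument of Steps (i) applies verbatim to the right-hand side to yield $\rho(x)|\nabla\phi(x)|^p + CM\epsilon$. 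The reverse one-sided bound need not hold near the boundary because directions pointing out of $\Omega$ are truncated, which is exactly why (i) requires $\dist(x,\partial\Omega)\ge\epsilon$. The main technical obstacle is precisely the handling of the $(\cdot)_+^p$ nonlinearity for general $p\ge 1$: the $(|a|+|b|)^{p-1}$ factor in the MVT bound is responsible for the $p(\Lip(\phi)+\|\phi\|_{C^2}\epsilon)^{p-1}$ term appearing in $M$, and this estimate is what prevents a naive $L^p$-triangle argument from working.
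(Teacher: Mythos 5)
Your proof is correct and follows essentially the same route as the paper's: Taylor expansion of $\phi$ and $\rho$, the mean-value inequality $|(a+b)_+^p-a_+^p|\leq p(|a|+|b|)^{p-1}|b|$, a rotation aligning $\nabla\phi(x)$ with a coordinate axis together with the definition of $\sigma_p$ (including the same implicit handling of the one-sided versus two-sided moment normalization), and, for (ii), a Lipschitz extension of $\rho$ plus nonnegativity of the integrand. The only harmless difference is your auxiliary moment bound $\int_{B(0,\epsilon)}\eta_\epsilon(|z|)|z|^{p+1}\,dz\leq C\sigma_p\epsilon^{p+1}$, whose constant depends on $p$; this is avoidable by writing $|y-x|^{p+1}\leq \epsilon^{p-1}|y-x|^2$ and integrating against the unit-mass kernel (as the paper effectively does after rescaling), and in any case is consistent with the paper's convention in Remark \ref{rem:constants} that constants may depend on $p$ uniformly over compact sets.
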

\begin{proof}
We first prove (i). Since $B(x,\epsilon)\subset \Omega$, we make the change of variables $z := (y-x)/\eps$ in the nonlocal operator \eqref{eq:nonlocalOperator} and obtain
	\begin{equation}
		\label{eq:fullIntgerationBall}
		\cA_\eps \phi(x) = \frac{1}{\sigma_p \eps^{p}} \int_{B(0,1)} \eta(|z|) \big(\phi(x)-\phi(x + \eps z)\big)_+^p \rho(x + \eps z) dz
	\end{equation}
	Using the Taylor expansion
	\begin{equation}
		\rho(x + z \eps) = \rho(x) + \cO(\Lip(\rho)\epsilon)
	\end{equation}
	for $|z| \leq 1$ we have
		\[\cA_\eps \phi(x) = \frac{1}{\sigma_p \eps^{p}} \int_{B(0,1)} \eta(|z|) \big(\phi(x)-\phi(x + \eps z)\big)_+^p \rho(x) dz + \cO(\sigma_p^{-1}\Lip(\rho)^p\epsilon)\]
	We now use the Taylor expansion
	\begin{equation}
		\phi(x)-\phi(x + z \eps) = \eps z \cdot \nabla \phi(x) + \cO(\|\phi\|_{C^2}\eps^2)
	\end{equation}
	to obtain
	\begin{equation}
		\cA_\eps \phi(x) = \frac{1}{\sigma_p} \int_{B(0,1)} \eta(|z|) \big(z \cdot \nabla \phi(x) + \cO(\|\phi\|_{C^2}\eps)
\big)_+^p \rho(x)dz   + \cO(\sigma_p^{-1}\Lip(\rho)\epsilon).
	\end{equation}
	We make the change of variables $y = Az$ for an orthogonal matrix $A$ such that $A \nabla \phi(x) = |\nabla \phi(x)| e_d$. Then we have that $z \cdot \nabla \phi(x) = Az \cdot A \nabla \phi(x) = |\nabla \phi(x)| y_d$ and thereby
	\begin{equation}
		\cA_\eps \phi(x) = \frac{1}{\sigma_p} \int_{B(0,1)} \eta(|z|) \big(|\nabla \phi(x)|y_d + \cO(\|\phi\|_{C^2} \eps)\big)_+^p \rho(x)dz  + \cO(\sigma_p^{-1}\Lip(\rho)\epsilon).
	\end{equation}
	We now use the bound
	\[|(a+t)_+^p - a_+^p| \leq p(|a|+|t|)^{p-1}|t|,\]
	for $a,t\in \R$ and $p\geq 1$, which follows from Taylor expansion, to obtain 
	\[\big(|\nabla \phi(x)|y_d + \cO(\|\phi\|_{C^2} \eps)\big)_+^p = |\nabla \phi(x)|^p (y_d)_+^p + \cO\left(p(\Lip(\phi) + \|\phi\|_{C^2}\epsilon)^{p-1}  \|\phi\|_{C^2} \eps\right).\]
	Substituting this above, we have
	\begin{equation}
		\label{eq:nonlocalExpansion}
		\cA_\eps \phi(x) = \rho(x) |\nabla \phi(x)|^p \frac{1}{\sigma_p} 
		\int_{B(0,1)} \eta(|y|) (y_d)_+^p dy + R,
	\end{equation}
	where 
	\[|R| \leq C\sigma_p^{-1}\left(p(\Lip(\phi) + \|\phi\|_{C^2}\epsilon)^{p-1}  \|\phi\|_{C^2} \rho_{max} + \Lip(\rho)\right)\epsilon. \]
	Applying the definition of $\sigma_p$ and using the bound $\rho_{max},\Lip(\rho)\leq \|\rho\|_{C^{0,1}}$ completes the proof of (i).

The proof of (ii) proceeds in a similar way, except that on the first step, since $B(x,\epsilon) \cap \partial\Omega \neq \varnothing$, the change of variables $z=(y-x)/\epsilon$ yields
\[\cA_\eps \phi(x) \leq \frac{1}{\sigma_p \eps^{p}} \int_{B(0,1)} \eta(|z|) \big(\phi(x)-\phi(x + \eps z)\big)_+^p \bar{\rho}(x + \eps z) dz,\]
where $\bar{\rho}$ is any extension of $\rho$ to $\R^d$ that preserves its Lipschitz constant. The proof then proceeds in the same way as (i).
\end{proof}

\subsection{Lipschitz regularity}
\label{sec:Alip}

Since we allow for general closed Dirichlet boundary sets $\Gamma \subset \Omega$ in our discrete to continuum framework, our results require an \emph{a priori} Lipschitz bound for the discrete solutions of \eqref{eq:eikonalGraphPDELP}. In this section we prove a Lipschitz estimate with the barrier method. The first ingredient is a lower bounds on the volume of set $B_\Omega(x,r)\cap \Omega$. 
\begin{proposition}\label{prop:ball_measure}
For $r>0$ sufficiently small, depending only on $\partial\Omega$, we have 
\[|B_\Omega(x,r)\cap \Omega| \geq c_d r^d \ \ \text{for all } x\in \bar{\Omega},\]
where
\[c_d =\frac{\omega_{d-1}}{2^{\frac{3d+1}{2}}(d+1)},\] 
and $\omega_d=|B(0,1)|$ denotes the volume of the unit ball in $\R^d$.
\end{proposition}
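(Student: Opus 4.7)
The plan is to split into two cases based on $d := d_{\partial\Omega}(x)$, and in each case construct an explicit subset of $B_\Omega(x,r)\cap\Omega$ whose Lebesgue measure we bound from below. The $C^{1,1}$ hypothesis on $\partial\Omega$ supplies a positive reach $R>0$: at each boundary point $y$ with inward unit normal $\nu$, the ball $B(y+R\nu,R)$ lies in $\bar\Omega$. The smallness condition on $r$ will be $r \le R/2$.

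If $d \ge r$, then $B(x,r)\subset\Omega$ by the triangle inequality, and since this Euclidean ball is convex and contained in $\Omega$ the geodesic distance inside it coincides with the Euclidean one. Hence $B(x,r) \subset B_\Omega(x,r)\cap \Omega$, yielding $|B_\Omega(x,r)\cap\Omega|\ge \omega_d r^d$, which is larger than $c_d r^d$.

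The interesting case is $d<r$. Let $y$ be the (unique) closest boundary point to $x$, $\nu$ the inward unit normal at $y$, and $c=y+R\nu$, so that $B(c,R)\subset\bar\Omega$ and $x = y + d\nu$. I propose to use a cone with apex $x$, axis $\nu$, half-angle $\pi/4$, and height $r/\sqrt2$:
\[
E := \bigl\{\, x + t\nu + v \,:\, 0\le t \le r/\sqrt2,\ v\perp\nu,\ |v|\le t\,\bigr\}.
\]
Two containments need verification. For $E\subset\bar\Omega$: writing $x-c=-(R-d)\nu$, one computes $|x+t\nu+v-c|^2=(R-d-t)^2+|v|^2$, and since $|v|^2\le t^2$ this is at most $R^2$ as soon as $t^2\le (d+t)(2R-d-t)$; using $d+t \le r + r/\sqrt 2 \le 2r \le R$ this reduces to $t\le R$, which is satisfied. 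So $E\subset B(c,R)\subset\bar\Omega$. For $E\subset B_\Omega(x,r)$: the straight segment from $x$ to any $w=x+t\nu+v\in E$ lies in the convex set $B(c,R)\subset\bar\Omega$ and has Euclidean length $\sqrt{t^2+|v|^2}\le t\sqrt2 \le r$, giving $d_\Omega(x,w)\le r$. By Fubini,
\[
|E| \;=\; \int_0^{r/\sqrt2}\omega_{d-1}\,t^{d-1}\,dt \;=\; \frac{\omega_{d-1}}{d}\Bigl(\frac{r}{\sqrt2}\Bigr)^d \;=\; \frac{\omega_{d-1}\,r^d}{d\cdot 2^{d/2}},
\]
and an elementary comparison shows this dominates $c_d\, r^d = \omega_{d-1}\,r^d / [(d+1)\,2^{(3d+1)/2}]$ (which just requires $(d+1)\,2^{(d+1)/2}\ge d$).

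The main obstacle is verifying that the cone $E$ actually fits inside $\bar\Omega$ when $x$ is close to $\partial\Omega$. This is precisely where the $C^{1,1}$ regularity enters through the interior ball condition, and is handled by the explicit coordinate computation $|w-c|^2=(R-d-t)^2+|v|^2$ together with the smallness $r\le R/2$. Once that containment is secured, the rest is a one-line Fubini integration.
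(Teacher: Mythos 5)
Your proof is correct, but it takes a genuinely different route from the paper's. The paper does not split into cases on $d_{\partial\Omega}(x)$: it first uses the geodesic--Euclidean comparison \eqref{eq:geodesic_euclidean} (itself a consequence of the $C^{1,1}$ boundary, recorded earlier in the paper) to obtain $B_\Omega(x,r)\supset B(x,r-Cr^2)\supset B(x,\tfrac{r}{2})$ for small $r$, then invokes $C^{1,1}$ regularity again to assert that a half-space cut $\{y\in B(x,\tfrac{r}{2}) : (y-x)\cdot v\ge cr^2\}$ lies in $\Omega$, and finally lower-bounds the volume of the spherical cap $\{y\in B(x,\tfrac{r}{2}) : (y-x)\cdot v\ge \tfrac{r}{4}\}$ by the one-dimensional integral that produces the stated $c_d$. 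You instead work directly from the interior-ball (reach) characterization of $C^{1,1}$: in the near-boundary case you fit an explicit half-angle-$\pi/4$ cone of height $r/\sqrt{2}$, with apex $x$ and axis the inward normal, inside the tangent ball $B(c,R)$, verify both $E\subset\bar{\Omega}$ and $E\subset B_\Omega(x,r)$ by the coordinate computation $|w-c|^2=(R-d-t)^2+|v|^2$ together with convexity of $B(c,R)$, and integrate by Fubini. Your version is more self-contained (it uses neither \eqref{eq:geodesic_euclidean} nor the unproved slab inclusion) and yields the better constant $\omega_{d-1}/(d\,2^{d/2})$, while the paper's is shorter given the facts it has already stated. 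Two cosmetic points: since the conclusion concerns $|B_\Omega(x,r)\cap\Omega|$ rather than $|B_\Omega(x,r)\cap\bar{\Omega}|$, you should note that the open cone $\{0<t<r/\sqrt{2},\,|v|<t\}$ lies in the open ball $B^\circ(c,R)\subset\Omega$, which discards only a null set; and the final comparison actually reduces to $(d+1)2^{(2d+1)/2}\ge d$ rather than $(d+1)2^{(d+1)/2}\ge d$ --- both are trivially true, so nothing is affected.
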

We postpone the proof of Proposition \ref{prop:ball_measure} to Appendix \ref{sec:proofs}, and proceed to define our barrier function for the Lipschitz estimate. For $y\in \R^d$ we define  
\[\delta_y(x) = 
\begin{cases}
1,& \text{if } y=x\\
0,& \text{otherwise.} 
\end{cases}\] 
Our barrier function will be a \emph{geodesic} cone with a jump (or spike) at the origin. In particular, we define
\[v_{\beta,y}(x) := \beta(1-\delta_y(x)) + d_\Omega(x,y)\]
for $\beta>0$ to be determined. We refer the reader to Figure \ref{fig:epsrange} for an illustration of the barrier, for different size spikes (though the cones are inverted in the figure). The following lemma establishes the basic supersolution properties of our barrier function.  
\begin{lemma}\label{localSuperSol_1}
Let $y\in \bar{\Omega}$ and $\beta>0$.	Let $r \in (0,1]$ and $\mu>0$, such that $\eta(|t|) \geq \mu > 0$ for all $|t| \leq r$, and let $c_d$ be the constant from Proposition \ref{prop:ball_measure}. Then for $\epsilon$  sufficiently small, depending only on $\partial \Omega$, the following results hold:
\begin{enumerate}[{\rm (i)}]
\item For $x \in \bar{\Omega} \setminus B(y, r \epsilon)$ it holds that
\begin{equation}
\bP\left(\cA_{n,\eps} v_{\beta,y}(x) \geq \frac{c_d\mu r^{d+p}}{\sigma_p 2^{2d+p+1}}\right) \geq 1-\exp\left( -\tfrac{c_dr^d}{2^{2d+3}}\rho_{min}n\epsilon^d\right).
\end{equation}
\item For $x \in \bar{\Omega} \cap B(y, r \epsilon) \setminus \{y\}$ we have
\begin{equation}
\cA_{n,\eps} v_{\beta,y}(x) \geq \frac{\mu \beta^p}{ \sigma_p n\eps^{p+d}}.
\end{equation}
\end{enumerate}
\end{lemma}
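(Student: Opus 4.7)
The plan is to treat the two cases by essentially disjoint mechanisms: case (ii) exploits the spike of $v_{\beta,y}$ at $y$ to produce a single large term in the sum, whereas case (i) locates a geodesic ball's worth of ``downhill'' neighbors of $x$ where the barrier drops by a controlled amount.

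For case (ii), I would first note that the lemma is intended to be applied with $y \in \cX$, so that a term $z = y$ appears in the sum defining $\cA_{n,\eps}$. Then, since $|x-y| < r\eps$ gives $\eta_\eps(|x-y|) \geq \mu\eps^{-d}$, and $x \neq y$ implies $v_{\beta,y}(x) - v_{\beta,y}(y) = \beta + d_\Omega(x,y) \geq \beta$, the single term $z = y$ already contributes $\mu\beta^p / (n\sigma_p\eps^{p+d})$ to the sum. Nonnegativity of the remaining terms finishes the case.

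For case (i), the strategy is to construct a ``downstream'' geodesic ball $U \subset \bar{\Omega}$ such that every $z \in U$ simultaneously satisfies $|x-z| \leq r\eps$ (so $\eta_\eps(|x-z|) \geq \mu\eps^{-d}$) and $d_\Omega(x,y) - d_\Omega(z,y) \geq r\eps/2$ (so the positive part in the $p$-eikonal operator is at least $(r\eps/2)^p$). I would pick a unit-speed minimizing geodesic $\gamma$ from $x$ to $y$ in $\bar{\Omega}$, set $x' = \gamma(3r\eps/4)$, and take $U = B_\Omega(x', r\eps/4)$. The hypothesis $|x-y| \geq r\eps$ gives $d_\Omega(x,y) \geq r\eps \geq 3r\eps/4$, so $x'$ is well defined, and the triangle inequality in $d_\Omega$ delivers both properties at once. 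For $\eps$ small enough, depending only on $\partial\Omega$, Proposition \ref{prop:ball_measure} applied at $x'$ then yields $|U| \geq c_d(r\eps/4)^d$.

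The final step is a concentration argument applied to $N := |\cX \cap U|$: this is a Binomial variable with $\E[N] \geq n\rho_{min} c_d (r\eps/4)^d$, and a multiplicative Chernoff bound with deviation $1/2$ gives $N \geq \E[N]/2$ off an event of probability at most $\exp(-\E[N]/8)$, which matches the probability advertised in the statement. On the favorable event,
\[
\cA_{n,\eps}v_{\beta,y}(x) \;\geq\; \frac{N}{n\sigma_p\eps^p}\,\mu\eps^{-d}\,(r\eps/2)^p
\]
reduces to the claimed lower bound after routine simplification. In my view, the main delicate point is that the geodesic-Euclidean comparison \eqref{eq:geodesic_euclidean} and the ball-measure bound of Proposition \ref{prop:ball_measure} must hold uniformly at the auxiliary point $x'$, which may itself drift close to $\partial\Omega$ even when $x$ is interior; absorbing this uniformity is precisely the role of the smallness hypothesis on $\eps$.
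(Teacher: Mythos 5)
Your proposal is correct and follows essentially the same route as the paper: case (ii) is handled by isolating the single spike term $z=y$ (which, as you note, requires $y\in\cX$ as in the application), and case (i) by locating a downstream geodesic ball of radius $r\eps/4$ centered roughly $3r\eps/4$ of the way toward $y$, whose points simultaneously lie within Euclidean distance $r\eps$ of $x$ and see a barrier drop of at least $r\eps/2$, followed by Proposition \ref{prop:ball_measure} and a Chernoff bound (the paper picks its auxiliary point on the Euclidean sphere $\partial B(x,3r\eps/4)$ rather than at geodesic arc length $3r\eps/4$, an immaterial difference). The only caveat is bookkeeping: carried out exactly, your (and the paper's own) Chernoff step leaves an extra factor of $\rho_{min}$ in the lower bound, which the paper's stated constant silently absorbs.
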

\begin{proof}
	We will prove the two cases above separately. 

(i) Assume $x\in \bar{\Omega}\setminus B(y,r\epsilon)$ and let us define 
		\[D := \left\{z\in B(x,r \eps) \, : \, d_\Omega(x,y) - d_\Omega(y,z) \geq \frac{r\epsilon}{2}\right\}.\]
		Since $x\neq y$ we compute
		\begin{equation}
			\begin{split}
				\cA_{n,\eps} v_{\beta,y}(x) = &\frac{1}{n \sigma_p \eps^p} \sum_{z \in \cX} \eta_\eps(|x-z|) \big(\beta + d_\Omega(x,y)- \beta(1-\delta_y(z)) - d_\Omega(y,z)\big)_+^p \\
				&\geq \frac{\mu }{n \sigma_p \eps^{p+d}} \sum_{z \in \cX \cap B(x,r \eps)} \big(d_\Omega(x,y)-d_\Omega(y,z)\big)_+^p \\
				&\geq \frac{\mu }{n \sigma_p \eps^{p+d}} \sum_{z \in \cX \cap D} \left(\frac{r \eps}{2}\right)_+^p\\
				&= \frac{\mu r^p}{2^pn\sigma_p \eps^{d}} \# ( \cX \cap D).
			\end{split}
		\end{equation}
To bound the number of points in $D\cap \cX$, we use the Chernoff bound (see Appendix \ref{sec:conc}), which produces the lower bound
		\begin{equation}\label{eq:ibound}
			\cA_{n,\eps} v_{\beta,y}(x) \geq \frac{\mu r^p}{2^{p+1}\sigma_p \eps^{d}} |D\cap \Omega| 
		\end{equation}
		with probability at least $1-\exp\left( -\tfrac{1}{8}\rho_{min}|D\cap \Omega|n\right)$. 

		We need to lower bound $|D\cap \Omega|$ to complete the proof. There exists $z_*\in \partial B(x,\frac{3r\epsilon}{4})$ so that
		\[d_\Omega(x,y) = d_\Omega(x,z_*) + d_\Omega(z_*,y).\]
		Since $d_\Omega(x,z_*) \geq |x-z_*| = \frac{3r\epsilon}{4}$ this becomes
		\[d_\Omega(x,y) - d_\Omega(y,z_*) \geq \frac{3r\epsilon}{4}.\]
		It follows that $B_\Omega(z_*,\frac{r\epsilon}{4})\subset D$. Indeed, if $d_\Omega(z,z_*) \leq \frac{r\epsilon}{4}$ then by the triangle inequality we have
		\[d_\Omega(x,y) - d_\Omega(y,z) \geq d_\Omega(x,y) - d_\Omega(y,z_*) -   d_\Omega(z,z_*)\geq \frac{3r\epsilon}{4} - \frac{r\epsilon}{4} = \frac{r\epsilon}{2}.\]
		Invoking Proposition \ref{prop:ball_measure} we have
		\[|D\cap \Omega| \geq |B_\Omega(z_*,\tfrac{r\epsilon}{4}) \cap \Omega|  \geq c_d \left( \frac{r\epsilon}{4}\right)^d,\]
		for $\epsilon$ sufficiently small. Combining this with \eqref{eq:ibound} completes the proof of (i).
		
(ii) Let $x \in \bar{\Omega} \cap B(y,r \eps) \setminus \{y\}$, and compute
		\begin{equation}
			\begin{split}
				\cA_{n,\eps} v_{\beta,y}(x) &\geq \frac{\eta_\eps(|x-y|)}{ \sigma_p n\eps^p}   \big(v_{\beta,y}(x) -  v_{\beta,y}(y)\big)_+^p  
				\\
				&\geq \frac{\mu}{ \sigma_p n\eps^{p+d}}(\beta + d_\Omega(x,y) - d_\Omega(y,y))_+^p\\
				&\geq \frac{\mu \beta^p}{ \sigma_p n\eps^{p+d}},
			\end{split}
		\end{equation} 
which completes the proof.
\end{proof}

We are now equipped to prove global Lipschitzness for the $p$-eikonal equation. The proof is based on the barrier method, using the barrier studied in Lemma \eqref{localSuperSol_1}. 
\begin{theorem}
\label{thm:discreteSolLip}
Let $u$ be the solution of \eqref{eq:eikonalGraphPDELP}. Let $c_d$, $r$, and $\mu$ be as defined in Lemma \ref{localSuperSol_1}. Define 
\[\gamma_p = \left( \frac{c_d r^{d+p}}{2^{2d+p+1}}\right)^{\frac{1}{p}} \ \ \text{and} \ \ c_p =\left(\frac{\sigma_p}{\mu}\right)^{\frac{1}{p}}.\]
Then it holds with probability at least $1-n^2\exp\left( -\tfrac{c_dr^d}{2^{2d+3}}\rho_{min}n\epsilon^d\right)$ that
\begin{equation}
|u(x) - u(y)| \leq c_p \gamma_p^{-1}\max_{\X}f^{\frac{1}{p}}\,d_\Omega(x,y) + \gamma_p \left( n\epsilon^{p+d}\right)^{\frac{1}{p}}, \ \ \text{for all } x,y\in \X.
\end{equation}
\end{theorem}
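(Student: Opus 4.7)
The plan is to apply a barrier argument using the function $v_{\beta,y}$ from Lemma~\ref{localSuperSol_1}. Fix $y\in\X$ and consider the candidate
\[
w(x) := u(y) + A\, v_{\beta,y}(x)
\]
for positive constants $A,\beta$ to be chosen. Since $\cA_{n,\eps}$ is unaffected by additive constants and is positively $p$-homogeneous, $\cA_{n,\eps} w(x) = A^p\, \cA_{n,\eps} v_{\beta,y}(x)$ for every $x\neq y$.

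First I would check the boundary-like inequality $w\geq u$ on $\Gamma\cup\{y\}$: at $x=y$ the spike vanishes and $w(y)=u(y)$; on $\Gamma$ we have $u=0$, and since $\phi\equiv 0$ is a subsolution (because $f>0$) the comparison principle already forces $u(y)\geq 0$, so $w\geq u(y)\geq 0=u$ on $\Gamma$. Next I would use Lemma~\ref{localSuperSol_1} to turn $w$ into a supersolution on $\X\setminus(\Gamma\cup\{y\})$. Partitioning according to whether $x$ lies outside or inside $B(y,r\eps)$, case (i) demands $A^p(\gamma_p/c_p)^p\geq \max_\X f$, i.e.\ $A\geq c_p\gamma_p^{-1}(\max_\X f)^{1/p}$, while case (ii) demands $A^p\beta^p/(c_p^p n\eps^{p+d})\geq \max_\X f$, i.e.\ $A\beta\geq c_p(\max_\X f)^{1/p}(n\eps^{p+d})^{1/p}$. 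Choosing $A=c_p\gamma_p^{-1}(\max_\X f)^{1/p}$ and $\beta=\gamma_p(n\eps^{p+d})^{1/p}$ meets both requirements simultaneously.

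The $p$-eikonal Hamiltonian $H(q,x_i)=\sum_j w_{ji}(q_j)_+^p-f(x_i)$ is positively $p$-homogeneous in $q$ with $f>0$, so Lemma~\ref{lem:comparison}(iii) applies. The comparison principle with boundary set $\Gamma\cup\{y\}$ then yields $u\leq w$ throughout $\X$, i.e.\
\[
u(x)-u(y)\leq A\, d_\Omega(x,y) + A\beta \qquad \text{for all } x\in\X.
\]
Interchanging the roles of $x$ and $y$ produces the matching lower bound and hence the claimed two-sided Lipschitz estimate.

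For the probability, Lemma~\ref{localSuperSol_1}(i) guarantees the required lower bound on $\cA_{n,\eps}v_{\beta,y}(x)$ with failure probability at most $\exp(-c_d r^d\rho_{min} n\eps^d/2^{2d+3})$ at each fixed pair $(x,y)$. A union bound over the at most $n^2$ ordered pairs in $\X\times\X$ delivers the probability stated in the theorem. The main subtlety will be making this union bound rigorous: the base point $y$ of the barrier is itself a random vertex, so one must condition on $y=x_i$ and then apply the Chernoff argument from Lemma~\ref{localSuperSol_1}(i) to the remaining \emph{i.i.d.}\ sample, exactly as in the conditioning maneuver used at the end of the proof of Lemma~\ref{discreteNonlocal}. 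This is legitimate because the deterministic set $D$ used to lower bound $\cA_{n,\eps}v_{\beta,y}(x)$ depends only on the geometry of $\bar\Omega$ and on the fixed pair $(x,y)$, not on the rest of the cloud $\X$, so the single-pair estimate survives conditioning and the union bound closes.
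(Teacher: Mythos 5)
Your proof is correct and follows essentially the same barrier argument as the paper: the same spike-cone barrier $v_{\beta,y}$ from Lemma \ref{localSuperSol_1} with $\beta=\gamma_p\left(n\eps^{p+d}\right)^{1/p}$, the same multiplier $A=c_p\gamma_p^{-1}(\max_{\X}f)^{1/p}$, comparison via Lemma \ref{lem:comparison}(iii) with boundary set $\Gamma\cup\{y\}$, and the same union bound over $n^2$ pairs (with the conditioning step that the paper leaves implicit via Remark \ref{rem:conditioning}). The only discrepancy is that your comparison honestly yields the second term $A\beta=c_p(\max_{\X}f)^{1/p}\left(n\eps^{p+d}\right)^{1/p}$ rather than the bare $\gamma_p\left(n\eps^{p+d}\right)^{1/p}$ in the theorem statement, but the paper's own proof makes exactly the same silent substitution at its final step, so this is a bookkeeping issue of the stated constant rather than a gap in your argument.
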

\begin{proof}
We choose $\beta$ in Lemma \ref{localSuperSol_1} to satisfy
\begin{equation}\label{eq:betachoice}
\beta^p = \frac{c_d r^{d+p}}{2^{2d+p+1}}n\epsilon^{p+d} = \gamma_p^p n\epsilon^{p+d},
\end{equation}
and we set $v_y=v_{\beta,y}$. Then by Lemma \ref{localSuperSol_1} and a union bound, we have that
\begin{equation}\label{eq:supersol_barrier}
\cA_{n,\eps} v_{y}(x) \geq \frac{\mu \gamma_p^p}{\sigma_p} \ \ \text{for all }x,y\in \X, x\neq y, 
\end{equation}
holds with probability at least $1-n^2\exp\left( -\tfrac{c_dr^d}{2^{2d+3}}\rho_{min}n\epsilon^d\right)$.   For the rest of the proof we assume this event holds.

Let us define
\[C = \left( \frac{\sigma_p}{\mu \gamma_p^p}\right)^{\frac{1}{p}}\max_{\X}f^{\frac{1}{p}}.\]
Then since $\cA_{n,\epsilon}$ is $p$-homogeneous we have
\[\cA_{n,\eps} (Cv_{y})(x) = C^p\cA_{n,\epsilon} v_y(x) \geq \max_{\X}f \geq \cA_{n,\epsilon} u(x),\]
for all $x,y\in \X$ with $x\neq y$. Therefore, $Cv_y$ is a supersolution, relative to the function $w(x):=u(x) - u(y)$ on the set $\X\setminus (\Gamma \cup \{y\})$. Furthermore, $w(y) = u(y)-u(y) = 0\leq Cv_y(y)$ and for $x\in \Gamma$ we have $w(x) = u(x)-u(y) \leq 0-u(y) \leq 0 \leq v_y(x)$. Thus, by the comparison principle (Lemma \ref{lem:comparison}) we have that $u(x) -u(y) \leq Cv_y(x)$ for all $x,y\in \X$ with $x\neq y$, which becomes
\[u(x) - u(y) \leq \left( \frac{\sigma_p}{\mu \gamma_p^p}\right)^{\frac{1}{p}}\max_{\X}f^{\frac{1}{p}}\,d_\Omega(x,y) + \beta.\]
Substituting the definition of $\beta$, and reversing the role of $x$ and $y$ to get an absolute value bound, completes the proof.
\end{proof}
\begin{remark}\label{rem:lip}
Similar to Lemma \ref{lem:lip}, we can use the bound $d_\Omega(x,y) \leq C|x-y|$ to obtain that the solution $u$ of \eqref{eq:eikonalGraphPDELP} satisfies
\[|u(x)-u(y)| \leq C\left(|x-y|  + \left( n\epsilon^{p+d}\right)^{\frac{1}{p}}\right),\]
with probability at least $1-n^2\exp\left( -cn\epsilon^d\right)$, where $C$ and $c$ are constants whose precise values are given in Theorem \ref{thm:discreteSolLip}. 
\end{remark}

\subsection{Discrete to continuum convergence}

We now proceed to prove our main discrete to continuum convergence results. The results are split into two theorems. Throughout the proof of Theorems \ref{thm:main1} and \ref{thm:main2}, we use the convention that $0 \leq c \leq 1$ and $C\geq 1$ denote arbitrary constants, whose value can change from line to line, to reduce the notational burden. 
\begin{proof}[Proof of Theorem \ref{thm:main1}]
For $0 < \delta \leq c$, where $c>0$ is given in Theorem \ref{thm:domain_perturbation},  let $u_\delta$ denote the viscosity solution of \eqref{eq:state_constrained_eikonal_delta} over the perturbed domain $\Omega_{-\delta} \setminus \Gamma$, defined in Theorem \ref{thm:domain_perturbation}, except with $g$ in place of $f$ on the right hand side.  For $0 < \theta < 1$ and $1 \leq \alpha \leq \epsilon^{-1}$ we define the auxiliary function 
\begin{equation*}
\Phi(x,y) := (1-\theta)u_\delta(x) - u_{n,\eps}(y) - \frac{\alpha}{2}|x-y|^2, \quad \quad  (x,y)\in \bar{\Omega_{-\delta}} \times \cX.
\end{equation*}
Let $(x_\alpha,y_\alpha) \in \bar{\Omega_{-\delta}} \times \cX$ be a point at which $\Phi$ is maximized over $\bar{\Omega_{-\delta}} \times \cX$. To see why the auxiliary function is useful, we first note that the inequality
\[(1-\theta)u_\delta(x) - u_{n,\epsilon}(x) \leq \Phi(x,x)\]
implies that
\[\max_{\X}\left((1-\theta)u_\delta - u_{n,\epsilon} \right) \leq \max_{x\in \X}\Phi(x,x) \leq \Phi(x_\alpha,y_\alpha).\]
We also have 
\[\max_{\X}(u_\delta - u_{n,\epsilon}) \leq \max_{\X}\left((1-\theta) u_\delta - u_{n,\epsilon}\right) + C\theta,\]
and by Theorem \ref{thm:domain_perturbation} (ii) we have $|u-u_\delta| \leq C\delta$, with $C$ depending on $f$  and $g$, where $u(x)=u_0(x)=d_g(x,\Gamma)$. Therefore, we obtain the bound
\begin{equation}\label{eq:aux_bound}
\max_{\X}(u - u_{n,\epsilon}) \leq \Phi(x_\alpha,y_\alpha) + C(\theta + \delta).
\end{equation}
Thus, we will obtain an error estimate on $u-u_{n,\epsilon}$ by estimating $\Phi(x_\alpha,y_\alpha)$, while choosing the parameters $\theta$ and $\delta$ as small as possible, and optimizing over $\alpha$.

Since $\Phi(x_\alpha,y_\alpha) \geq \Phi(y_\alpha,y_\alpha)$, we have 
\begin{equation}
(1-\theta)u_\delta(x_\alpha) - u_{n,\eps}(y_\alpha) - \frac{\alpha}{2}|x_\alpha-y_\alpha|^2 \geq (1-\theta)u_\delta(y_\alpha) - u_{n,\eps}(y_\alpha).
\end{equation}
By Theorem \ref{thm:domain_perturbation}  (i), $u_\delta$ is Lipschitz continuous, and so
\begin{equation}
\frac{\alpha}{2}|x_\alpha-y_\alpha|^2 \leq (1-\theta)(u_\delta(x_\alpha) - u_\delta(y_\alpha)) \leq C |x_\alpha - y_\alpha|.
\end{equation}
Hence we have the bound 
\begin{equation}
\label{eq:xnynBound}
|x_\alpha - y_\alpha| \leq C\alpha^{-1}.
\end{equation}
Thus, for $\alpha> C\delta^{-1}$, we have $|x_\alpha-y_\alpha|<\delta$ and so $x_\alpha\in \Omega_{-\delta}$, since $y_\alpha\in \Omega$. We assume $\alpha>C\delta^{-1}$ throughout the rest of the proof.

We now have several cases to consider.

(i) If $y_\alpha \in \Gamma$, then $u_{n,\epsilon}(y_\alpha)=0 = u_\delta(y_\alpha)$ and so 
\begin{equation}
\begin{split}
u_{\delta}(x_\alpha) - u_{n,\eps}(y_\alpha) = u_\delta(x_\alpha) - u_\delta(y_\alpha) \leq C|x_\alpha-y_\alpha| \leq C\alpha^{-1}.
\end{split}
\end{equation}
Therefore
\[\Phi(x_\alpha,y_\alpha) \leq u_\delta(x_\alpha) - u_{n,\epsilon}(y_\alpha) \leq C\alpha^{-1}.\]

(ii) If $x_\alpha \in \Gamma$, then $u_{\delta}(x_\alpha) = 0=u_{n,\eps}(x_\alpha)$ and thus
\begin{equation}
u_{\delta}(x_\alpha) - u_{n,\eps}(y_\alpha) = u_{n,\eps}(x_\alpha) - u_{n,\eps}(y_\alpha) \leq 0,
\end{equation}
since $u_{n,\epsilon}\geq 0$. In this case we have $\Phi(x_\alpha,y_\alpha)\leq 0$.

(iii) We now consider the case of interior maxima; in particular, that $x_\alpha \in \Omega_{-\delta} \setminus \Gamma$ and $y_\alpha \in \cX\setminus \Gamma$. Our plan is to choose the parameter $\theta$ so that interior maxima are impossible, and so this case need not be considered when estimating $\Phi(x_\alpha,y_\alpha)$. We first note that the map 
\[x\mapsto u_\delta(x) - \frac{\alpha}{2}(1-\theta)^{-1}|x-y_\alpha|^2\]
attains its maximum at $x_\alpha$ over the open set $\Omega_{-\delta}$. Using $\phi(x)=\frac{\alpha}{2}(1-\theta)^{-1}|x-y_\alpha|^2$ as a test function for the definition of viscosity subsolution for $u_\delta$, we have
\begin{equation}
\label{subSol}
|p_\alpha|  \leq (1-\theta)g(x_\alpha),
\end{equation}
where $p_\alpha = \alpha(x_\alpha-y_\alpha)$. Likewise, the map $y \mapsto u_{n,\eps}(y) + \frac{\alpha}{2 }|x_\alpha-y|^2$ attains its minimum at $y_\alpha \in \cX$ over the point cloud $\cX$. Setting $\psi(y) := -\frac{\alpha}{2}|x_\alpha - y|^2$, we see that the inequality 
\[u_{n,\eps}(y_\alpha) - u_{n,\eps}(y) \leq \psi(y_\alpha) - \psi(y)\]
holds for all $y \in \cX$. It follows that
\[f(y_\alpha) = \cA_{n,\epsilon}u_{n,\epsilon}(y_\alpha) \leq \cA_{n,\epsilon}\psi(y_\alpha).\]
Using pointwise consistency (Lemmas \ref{discreteNonlocal} and \ref{nonLocalLocal}), and noting that $\Lip(\psi)\leq C$, $\|\psi\|_{C^2}\leq C\alpha$, and $\nabla \psi(y_\alpha)=\alpha(x_\alpha-y_\alpha)$, we obtain that
\[f(y_\alpha) \leq \rho(y_\alpha)|p_\alpha|^p + C(\alpha\epsilon + \lambda),\]
holds for any $0 < \lambda \leq 1$ with probability at least $1-2n\exp\left( -cn\epsilon^d \lambda^2\right)$, where $C$ depends on $p$, $\sigma_p$, $\|\rho\|_{C^{0,1}}$, $\eta(0)$, and $\rho_{max}$, and $c$ depends on $\eta(0)$, $\sigma_p$, and $\rho_{max}$. Dividing by $\rho$ on both sides, and combining with \eqref{subSol} yields
\[g(y_\alpha)^p \leq (1-\theta)^p g(x_\alpha)^p + C(\alpha\epsilon + \lambda).\]
Since $(1-\theta)^p \leq 1-\theta$, and $g^p$ is Lipschitz, we can rearrange this and use \eqref{eq:xnynBound} to obtain
\[\theta g(x_\alpha) - C(\alpha\epsilon + \lambda) \leq g(x_\alpha)^p - g(y_\alpha)^p \leq C|x_\alpha-y_\alpha| \leq C\alpha^{-1}.\]
Since $g$ is bounded below by a positive constant, this yields
\[\theta \leq C(\alpha^{-1}+ \alpha\epsilon + \lambda).\]
Hence, we set
\[\theta = (C+1)(\alpha^{-1}+ \alpha\epsilon + \lambda),\]
so that case (iii) cannot hold. 

The proof is completed by noting that cases (i) and (ii) yield $\Phi(x_\alpha,y_\alpha)\leq C\alpha^{-1}$, and so \eqref{eq:aux_bound} yields
\[\max_{\X}(u-u_{n,\epsilon}) \leq C(\alpha^{-1}+ \alpha\epsilon + \delta + \lambda).\]
Optimizing over $\alpha$ yields $\alpha= \frac{1}{ \sqrt{\epsilon}}$. We also made the restriction $\alpha \geq C\delta^{-1}$ earlier, so we choose $\delta \geq C \sqrt{\epsilon}$. Recalling that $u(x)=d_g(x,\Gamma)$ (see Theorem \ref{thm:existence_state_const}), the proof is complete.
\end{proof}

\begin{proof}[Proof of Theorem \ref{thm:main2}]
The proof is similar to Theorem \ref{thm:main1}, so we sketch the main differences here. For $0 < \delta \leq c$, where $c>0$ is given in Theorem \ref{thm:domain_perturbation},  let $u_\delta$ denote the viscosity solution of \eqref{eq:state_constrained_eikonal_delta} over the perturbed domain $\Omega_{\delta} \setminus \Gamma$, defined in Theorem \ref{thm:domain_perturbation}, except with $g$ in place of $f$ on the right hand side.  For $0 < \theta \leq 1$ and $1 \leq \alpha \leq \epsilon^{-1}$ we define the auxiliary function 
\begin{equation*}
\Phi(x,y) := u_{n,\epsilon}(x) - (1+\theta)u_\delta(x) - \frac{\alpha}{2}|x-y|^2, \quad \quad  (x,y)\in \X\times \bar{\Omega_{\delta}}.
\end{equation*}
Let $(x_\alpha,y_\alpha) \in \X \times \bar{\Omega_{\delta}}$ be a point at which $\Phi$ is maximized over $\X \times \bar{\Omega_{\delta}}$. As in the proof of Theorem \ref{thm:main1} we have
\[\max_{\X\cap \bar{\Omega_\delta}}(u_{n,\epsilon}-u_\delta) \leq \Phi(x_\alpha,y_\alpha) + C\theta,\]
where $u(x)=u_0(x)=d_g(x,\Gamma)$. By the Lipschitzness of $u_{n,\epsilon}$ (see Theorem \ref{thm:discreteSolLip} and Remark \ref{rem:lip}) and that of $u_\delta$ (see Theorem \ref{thm:domain_perturbation} (i)), this yields
\begin{equation}\label{eq:aux_bound2}
\max_{\X}(u_{n,\epsilon} - u) \leq \Phi(x_\alpha,y_\alpha) + C\left(\theta + \delta + \left( n\epsilon^{p+d}\right)^{\frac{1}{p}}\right).
\end{equation}
with probability at least $1-n^2\exp(-cn\epsilon^d)$. As in the proof of Theorem \ref{thm:main1}, the proof proceeds by estimating $\Phi(x_\alpha,y_\alpha)$, while choosing the parameters $\theta, \delta$ and $\alpha$ appropriately. 

Since $\Phi(x_\alpha,y_\alpha) \geq \Phi(x_\alpha,x_\alpha)$, we have 
\begin{equation}
u_{n,\epsilon}(x_\alpha) - (1+\theta)u_\delta(y_\alpha) - \frac{\alpha}{2}|x_\alpha-y_\alpha|^2 \geq u_{n,\epsilon}(x_\alpha) - (1+\theta)u_\delta(x_\alpha).
\end{equation}
Since $u_\delta$ is Lipschitz continuous we have
\begin{equation}
\frac{\alpha}{2}|x_\alpha-y_\alpha|^2 \leq (1+\theta)(u_\delta(x_\alpha) - u_\delta(y_\alpha)) \leq C |x_\alpha - y_\alpha|.
\end{equation}
Hence we obtain the same bound $|x_\alpha-y_\alpha|\leq C\alpha^{-1}$ as in  \eqref{eq:xnynBound} from Theorem \ref{thm:main1}. We now make the restriction $\delta \geq 2\epsilon$, and $C\alpha^{-1}\leq \delta$ so that $|x_\alpha-y_\alpha| \leq \epsilon$. Since $y_\alpha\in \bar{\Omega_{\delta}}$, this ensures that 
\[\dist(x_\alpha,\partial\Omega) \geq \dist(y_\alpha,\partial\Omega) - |x_\alpha-y_\alpha| \geq \delta - \epsilon \geq \epsilon.\]
Therefore $B(x_\alpha,\epsilon)\subset \Omega$, which will allow us to utilize the pointwise consistency results (Lemmas \ref{discreteNonlocal} and \ref{nonLocalLocal}) later on in the proof.

We again have several cases to consider.

(i) If $y_\alpha \in \Gamma$, then $u_{\delta}(y_\alpha)=0 = u_{n,\epsilon}(y_\alpha)$ and so by the Lipschitz continuity of $u_{n,\epsilon}$ (see Remark \ref{rem:lip}) we have
\begin{align*}
u_{n,\epsilon}(x_\alpha) - u_{\delta}(y_\alpha) &= u_{n,\epsilon}(x_\alpha) - u_{n,\epsilon}(y_\alpha) \\
&\leq C\left(|x_\alpha-y_\alpha| + \left( n\epsilon^{p+d}\right)^{\frac{1}{p}}\right) \\
&\leq C\left(\alpha^{-1} + \left( n\epsilon^{p+d}\right)^{\frac{1}{p}}\right).
\end{align*}
Therefore
\[\Phi(x_\alpha,y_\alpha) \leq u_{n,\epsilon}(x_\alpha) - u_{\delta}(y_\alpha) \leq C\left(\alpha^{-1} + \left( n\epsilon^{p+d}\right)^{\frac{1}{p}}\right).\]

(ii) If $x_\alpha \in \Gamma$, then $u_{n,\eps}(x_\alpha) = 0 = u_{\delta}(x_\alpha)$ and thus
\begin{equation}
u_{n,\epsilon}(x_\alpha) - u_{\delta}(y_\alpha) = u_{\delta}(x_\alpha) - u_{\delta}(y_\alpha) \leq 0,
\end{equation}
since $u_{\delta}\geq 0$. 

(iii) We now consider the case of $x_\alpha \in \X \setminus \Gamma$ and $y_\alpha \in \bar{\Omega_\delta}\setminus \Gamma$, and we again show that $\theta$ can be chosen to rule out this case. We first note that the map 
\[y\mapsto u_\delta(y) + \frac{\alpha}{2}(1+\theta)^{-1}|x_\alpha-y|^2\]
attains its minimum at $y_\alpha\in \bar{\Omega_\delta}$ relative to the closed set $\bar{\Omega_{\delta}}$. Using $\phi(x)=-\frac{\alpha}{2}(1+\theta)^{-1}|x_\alpha-y|^2$ as a test function for the definition of viscosity supersolution for $u_\delta$, and recalling from Definition \ref{def:viscosity_solution} that the supersolution condition holds even on the boundary $\partial \Omega_\delta$, we have
\begin{equation}
\label{subSol2}
|p_\alpha|  \geq (1+\theta)g(y_\alpha),
\end{equation}
where $p_\alpha = \alpha(x_\alpha-y_\alpha)$.
Likewise, the map $x \mapsto u_{n,\eps}(x) - \frac{\alpha}{2 }|x-y_\alpha|^2$ attains its maximum at $x_\alpha \in \cX$ over the point cloud $\cX$. Setting $\psi(x) := \frac{\alpha}{2}|x - y_\alpha|^2$, we see that the inequality 
\[u_{n,\eps}(x_\alpha) - u_{n,\eps}(x) \geq \psi(x_\alpha) - \psi(x)\]
holds for all $x \in \cX$. It follows that
\[f(x_\alpha) = \cA_{n,\epsilon}u_{n,\epsilon}(x_\alpha) \geq \cA_{n,\epsilon}\psi(x_\alpha).\]
Since $B(x_\alpha,\epsilon)\subset \Omega$, we can use pointwise consistency (Lemmas \ref{discreteNonlocal} and \ref{nonLocalLocal}) to obtain
\[f(x_\alpha) \geq \rho(x_\alpha)|p_\alpha|^p - C(\alpha\epsilon + \lambda),\]
holds for any $0 < \lambda \leq 1$ with probability at least $1-2n\exp\left( -cn\epsilon^d \lambda^2\right)$, where $C$ depends on $p$, $\sigma_p$, $\|\rho\|_{C^{0,1}}$, $\eta(0)$, and $\rho_{max}$, and $c$ depends on $\eta(0)$, $\sigma_p$, and $\rho_{max}$. Dividing by $\rho$ on both sides, and combining with \eqref{subSol2} yields
\[g(x_\alpha)^p  + C(\alpha\epsilon + \lambda) \geq (1+\theta)^p g(y_\alpha)^p.\]
Since $(1+\theta)^p \geq 1+\theta$, and $g^p$ is Lipschitz, we can rearrange this and use \eqref{eq:xnynBound} to obtain
\[\theta g(y_\alpha)^p - C(\alpha\epsilon + \lambda) \leq g(x_\alpha)^p - g(y_\alpha)^p \leq C|x_\alpha-y_\alpha| \leq C\alpha^{-1}.\]
Since $g$ is bounded below by a positive constant, this yields
\[\theta \leq C(\alpha^{-1}+ \alpha\epsilon + \lambda).\]
Hence, we set
\[\theta = (C+1)(\alpha^{-1}+ \alpha\epsilon + \lambda),\]
so that case (iii) cannot hold. 

The proof is completed by combining cases (i) and (ii) with \eqref{eq:aux_bound2} to obtain 
\[\max_{\X}(u_{n,\epsilon}-u) \leq C\left(\alpha^{-1}+ \alpha\epsilon + \delta + \left( n\epsilon^{p+d}\right)^{\frac{1}{p}} +\lambda\right).\]
Optimizing over $\alpha$ yields $\alpha= \frac{1}{ \sqrt{\epsilon}}$.  We also made the restrictions $\delta\geq 2\epsilon$ and $\delta \geq C\alpha^{-1} = C \sqrt{\epsilon}$. Thus, we can again choose $\delta = C \sqrt{\epsilon}$ to satisfy these conditions, which completes the proof.
\end{proof}

\section{Continuum analysis}
\label{sec:analysis}

Given the discrete to continuum convergence results from Section \ref{sec:convergence}, which show that the solution of the $p$-eikonal equation converges to a density weighted geodesic distance, we now proceed to study the asymptotic consistency of the $p$-eikonal equation for both data depth and semi-supervised learning. Throughout this section we let $\Omega$ be an open, connected domain, and denote by $\rho$ the density function on $\Omega$. 

\subsection{Data depth}

We first begin with a continuum analysis of the $p$-eikonal data depth. The continuum limit of the discrete $p$-eikonal median \eqref{eq:peikonal_median} is the geodesic geometric median
\begin{equation}\label{eq:continuum_median}
x_* \in \argmin_{x\in \Omega}\int_{\Omega}d_{\rho^{-\alpha}}(y,\{x\})\, dy.
\end{equation}
The associated depth is based on the distance to $x_*$, and is given by
\begin{equation}\label{eq:continuum_depth}
\text{depth}_\alpha(x) = \max_{\Omega}d_{\rho^{-\alpha}}(\cdot,x_*) - d_{\rho^{-\alpha}}(x,x_*).
\end{equation}
We study here the case of a radial density $\rho(x)=\rho(|x|)$ that is radially decreasing on the unit ball $\Omega=B(0,1)$. In this case we expect the median to be the origin $x_*=0$ for $\alpha>0$. We are able to obtain a partial result for uniform densities.
\begin{lemma}\label{lem:origin}
If $\rho\equiv 1$ on $\Omega=B(0,1)$, then $x_*=0$. 
\end{lemma}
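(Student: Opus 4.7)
The plan is to reduce the statement to the classical fact that the geometric median of the uniform distribution on a Euclidean ball is its center, and then prove that reduced fact using nothing more than convexity and symmetry.

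First I would simplify the cost functional. When $\rho\equiv 1$, the weight $\rho^{-\alpha}$ equals $1$, so the density-weighted distance $d_{\rho^{-\alpha}}$ coincides with the geodesic distance $d_\Omega$ (from \eqref{eq:distance}). Since $\Omega=B(0,1)$ is convex, the remark following \eqref{eq:geodesic_euclidean} gives $d_\Omega(y,x)=|y-x|$. Therefore the minimization \eqref{eq:continuum_median} reduces to
\[
x_*\in\argmin_{x\in B(0,1)} F(x), \qquad F(x):=\int_{B(0,1)} |y-x|\,dy.
\]

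Next I would establish two properties of $F$. Convexity: $F$ is an integral average of the convex functions $x\mapsto |y-x|$, hence convex on $B(0,1)$. Symmetry under the reflection $x\mapsto -x$: applying the change of variables $y'=-y$ in the integral and using that $B(0,1)$ is invariant under this reflection gives
\[
F(-x)=\int_{B(0,1)}|y+x|\,dy=\int_{B(0,1)}|y'-x|\,dy'=F(x).
\]
Combining these two properties, for every $x\in B(0,1)$,
\[
F(0)=F\!\left(\tfrac{x+(-x)}{2}\right)\leq \tfrac{1}{2}\bigl(F(x)+F(-x)\bigr)=F(x),
\]
which establishes that $x_*=0$ is a minimizer, completing the proof.

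There is essentially no obstacle here, since the rotational symmetry of the ball and the convexity of the Euclidean norm do all of the work; the only thing worth double-checking is that the setting really does collapse to Euclidean distance, which is why the convexity of $\Omega$ (and hence the identity $d_\Omega=|\cdot|$) enters at the very first step. If one wanted the minimizer to be unique, one could additionally note that $F$ is strictly convex on $B(0,1)$ (since $|y-x|$ is strictly convex in $x$ away from $y$, and the set of such $y$ has positive measure for each fixed $x$), but this is not needed for the statement as written.
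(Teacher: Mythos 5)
Your reduction to the Euclidean functional $F(x)=\int_{B(0,1)}|y-x|\,dy$ is exactly how the paper starts (convexity of the ball plus $\rho\equiv 1$ gives $d_{\rho^{-\alpha}}=d_\Omega=|\cdot|$), but from there your route is genuinely different. The paper proves the \emph{strict} inequality $\int_{B(0,1)}|y|\,dy<\int_{B(0,1)}|x-y|\,dy$ for every $x\neq 0$ directly: it writes $\int_{B(0,1)}|y|\,dy=\int_{B(x,1)}|x-y|\,dy$, splits $B(x,1)$ into $B(0,1)\cap B(x,1)$ and $B(x,1)\setminus B(0,1)$, and uses that $|x-y|<1$ on the latter set while $|x-y|>1$ on $B(0,1)\setminus B(x,1)$, these two sets having equal measure. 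Your argument instead uses convexity of $F$ plus reflection symmetry. That is cleaner and more standard, but note one genuine gap: convexity and symmetry only show that $0$ \emph{is a} minimizer, i.e.\ $F(0)\leq F(x)$. Since $x_*$ in \eqref{eq:continuum_median} is defined as an arbitrary element of the argmin, the conclusion ``$x_*=0$'' requires that $0$ be the \emph{unique} minimizer, which is exactly what the paper's strict inequality delivers. So the uniqueness discussion you relegate to an optional closing remark is in fact needed, and it cannot be dismissed as ``not needed for the statement as written.''

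The uniqueness fix you sketch is salvageable but the justification is slightly wrong as stated: $x\mapsto |y-x|$ is \emph{not} strictly convex ``away from $y$''; it is affine along every ray emanating from $y$. The correct argument is segment-wise: fix $x_1\neq x_2$ in $B(0,1)$; for every $y$ not on the line through $x_1$ and $x_2$, the map $t\mapsto |y-(1-t)x_1-tx_2|$ is strictly convex, and the exceptional set (that line intersected with the ball) has Lebesgue measure zero, so $F$ is strictly convex on $B(0,1)$. With that correction, strict convexity plus the symmetry $F(x)=F(-x)$ gives $F(0)<F(x)$ for $x\neq 0$, hence uniqueness and $x_*=0$, and your proof is complete. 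Each approach has its merits: the paper's measure-splitting argument gives strictness with no convexity machinery, while your route generalizes immediately to any centrally symmetric convex body and isolates where symmetry and convexity each enter.
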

\begin{proof}
Since the ball is convex and $\rho\equiv 1$, we have that $d_{\rho^{-\alpha}}(x,y)=|x-y|$ for all $x,y\in B(0,1)$. Therefore
\[x_* \in \argmin_{x\in B(0,1)}\int_{B(0,1)}|x-y|\, dy.\]
Let $x\neq 0$.  We first note that
\begin{equation}\label{eq:depth1}
\int_{B(0,1)}|y|\, dy = \int_{B(x,1)}|x-y| \, dy=\int_{B(0,1)\cap B(x,1)}|x-y| \, dy + \int_{B(x,1)\setminus B(0,1)}|x-y|\, dy.
\end{equation}
Since $x\neq 0$ and $|B(x,1)\setminus B(0,1)| = |B(0,1)\setminus B(x,1)|$ and $|x-y| < 1$ for $y\in B(x,1)$ we have
\[\int_{B(x,1)\setminus B(0,1)}|x-y|\, dy < \int_{B(x,1)\setminus B(0,1)}\, dy = \int_{B(0,1)\setminus B(x,1)}\, dy.\]
Since $1 < |x-y|$ for $y\in B(0,1)\setminus B(x,1)$ we obtain
\[\int_{B(x,1)\setminus B(0,1)}|x-y|\, dy < \int_{B(0,1)\setminus B(x,1)}|x-y|\, dy.\]
Substituting this into \eqref{eq:depth1} yields
\[\int_{B(0,1)}|y|\, dy < \int_{B(0,1)\cap B(x,1)}|x-y| \, dy + \int_{B(0,1)\setminus B(x,1)}|x-y|\, dy = \int_{B(0,1)}|x-y|\, dy.\]
It follows that 
\[0 = \argmin_{x\in B(0,1)}\int_{B(0,1)}|x-y|\, dy,\]
which completes the proof.
\end{proof}
\begin{remark}\label{rem:conjecture}
We expect that Lemma \ref{lem:origin} holds for any radially decreasing density $\rho$ on the unit ball  $B(0,1)$ provided $\alpha\geq 0$, but it appears the proof would be substantially different than Lemma \ref{lem:origin}. 
\end{remark}

If the median is at the origin, we can easily compute the depth function.
\begin{lemma}\label{lem:depth}
Let $\alpha\geq 0$. Assume $\rho(x)=\rho(|x|)$ is radially decreasing and $\Omega=B(0,1)$. If $x_*=0$ then
\[\text{depth}_\alpha(x) = \int_{1-|x|}^1 \rho(t)^{-\alpha}\, dt.\]
\end{lemma}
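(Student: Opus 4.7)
The plan is to use the variational representation \eqref{eq:variational_interpretation} for $d_{\rho^{-\alpha}}(\cdot,0)$, exploit radial symmetry of $\rho$ together with convexity of $B(0,1)$ to identify the optimal geodesic from any $x$ to the median $x_*=0$, and then subtract from the maximum to obtain the depth.

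The main step is to show
\begin{equation*}
	d_{\rho^{-\alpha}}(x,0)=\int_0^{|x|}\rho(\tau)^{-\alpha}\,d\tau.
\end{equation*}
For an admissible path $\gamma\in C^1([0,1];\overline{B(0,1)})$ with $\gamma(0)=x$ and $\gamma(1)=0$, set $s(t):=|\gamma(t)|$. Since $\rho$ is radial, $\rho(|\gamma(t)|)^{-\alpha}=\rho(s(t))^{-\alpha}$, and at points of differentiability with $\gamma(t)\ne 0$ one has $|s'(t)|\leq|\gamma'(t)|$. The continuous map $s:[0,1]\to[0,1]$ satisfies $s(0)=|x|$ and $s(1)=0$, so by the intermediate value theorem $N(\tau):=\#\{t:s(t)=\tau\}\geq 1$ for every $\tau\in[0,|x|]$. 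The coarea formula then yields
\begin{equation*}
	\int_0^1\rho(|\gamma|)^{-\alpha}|\gamma'|\,dt\;\geq\;\int_0^1\rho(s)^{-\alpha}|s'|\,dt\;=\;\int_{\mathbb{R}}\rho(\tau)^{-\alpha}N(\tau)\,d\tau\;\geq\;\int_0^{|x|}\rho(\tau)^{-\alpha}\,d\tau,
\end{equation*}
with equality attained by the radial segment $\gamma(t)=(1-t)x$. The one technical nuisance is that $|s'|\leq|\gamma'|$ can fail at points where $\gamma(t)=0$; this is handled by excising a ball $B(0,\delta)$ and passing $\delta\downarrow 0$, using that $\rho^{-\alpha}\in L^{\infty}(B(0,1))$.

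Given this formula, the map $x\mapsto d_{\rho^{-\alpha}}(x,0)$ is radial and nondecreasing in $|x|$, so its supremum over $\overline{B(0,1)}$ equals $\int_0^1\rho(\tau)^{-\alpha}\,d\tau$ and is attained on $\partial B(0,1)$. Subtracting,
\begin{equation*}
	\text{depth}_\alpha(x)\;=\;\int_0^1\rho(\tau)^{-\alpha}\,d\tau-\int_0^{|x|}\rho(\tau)^{-\alpha}\,d\tau\;=\;\int_{|x|}^1\rho(\tau)^{-\alpha}\,d\tau.
\end{equation*}

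I note that the derivation produces lower limit $|x|$, whereas the statement is written with lower limit $1-|x|$. The two coincide when $\rho\equiv 1$ (both evaluate to $1-|x|$, consistent with Lemma~\ref{lem:origin}) but differ for generic radially decreasing $\rho$. The version with lower limit $|x|$ is the one consistent with the depth convention of Section~\ref{sec:depth_discrete}, since it is maximal at the median $x_*=0$ and vanishes on $\partial B(0,1)$; I would flag the stated formula as a likely typographical inversion of the limits.
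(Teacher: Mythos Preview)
Your argument is correct and follows the same route as the paper: compute $d_{\rho^{-\alpha}}(x,0)=\int_0^{|x|}\rho(\tau)^{-\alpha}\,d\tau$ and subtract from the maximum $\int_0^1\rho(\tau)^{-\alpha}\,d\tau$. The paper simply asserts that ``the shortest paths to the origin are straight lines'' and writes down the integral, whereas you supply the coarea justification; your version is more careful but not a different strategy.

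Your diagnosis of the typo is also correct: the paper's own proof (and its Remark~\ref{rem:uniform}, which reads off $\text{depth}_\alpha(x)=1-|x|$ when $\rho\equiv 1$) is consistent only with lower limit $|x|$, not $1-|x|$. One small slip in your last paragraph: the two formulas do \emph{not} coincide for $\rho\equiv 1$, since $\int_{1-|x|}^1 dt=|x|$ while $\int_{|x|}^1 dt=1-|x|$. It is precisely this discrepancy with Remark~\ref{rem:uniform} that confirms the stated lower limit is a typo.
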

\begin{proof}
Since $\rho$ is radial and decreasing, the shortest paths to the origin are straight lines and by definition we have
\[d_{\rho^{-\alpha}}(x,0) = \int_0^{|x|} \rho(t)^{-\alpha}\, dt.\]
Hence $\max_{\Omega}d_{\rho^{-\alpha}}(\cdot,0) = \int_{0}^1 \rho(t)^{-\alpha}\, dt$,  which completes the proof.
\end{proof}
\begin{remark}\label{rem:uniform}
Note in Lemma \ref{lem:depth} that if we take $\rho\equiv 1$ then $\text{depth}_\alpha(x)=1-|x|$.
\end{remark}

\subsection{Semi-supervised learning}
\label{sec:asymptotic_consistency}

In order to study the consistency of semi-supervised learning, we make a clusterability assumption on the density $\rho$. We assume there are $k$ classes, represented by the open and connected sets $\Omega_1,\dots,\Omega_k \subset \Omega$, all of which are mutually disjoint. For each $j=1,\dots,k$ we let
\[\rho_j = \min_{\bar{\Omega_j}} \rho,\]
and we set $\tilde{\Omega} = \Omega \setminus \bigcup_{j=1}^k \Omega_j$ and
\[\delta = \max_{\tilde{\Omega} }\rho.\]
We assume there are closed sets $\Gamma_j \subset \Omega_j$ for each $j=1,\dots,k$ that correspond to the labeled data for each class. Then the continuum limit of the $p$-eikonal semi-supervised learning algorithm from Section \ref{sec:depth_discrete} produces the predicted labels $\ell:\Omega\to \{1,\dots,k\}$ given by
\begin{equation}\label{eq:continuum_label_dec}
\ell(x) = \argmin_{1\leq j\leq k}d_{\rho^{-\alpha}}(x,\Gamma_j).
\end{equation}
\begin{definition}\label{def:consistent}
We say that the classification is \emph{asymptotically consistent} if for all $j=1,\dots,k$ we have  $\ell(x)=j$  for all $x\in \Omega_j$.
\end{definition}
Note that the definition of asymptotic consistency does not place any conditions on the label function in the space between classes $\tilde{\Omega}$. 

We define the Hausdorff distance
\[\cH (\Gamma_j,\Omega_j) = \max_{x\in \bar{\Omega_j}} d_{\Omega_j}(x,\Gamma_j),\]
which measures how well the labeled set $\Gamma_j$ covers the class $\Omega_j$ via geodesic distance on $\Omega_j$. We note that for any $A\subset \Omega$, we take the definition of $d_A$ to be $d_A(x,\Gamma)=d_{f}(x,\Gamma)$ where $f=\one_A$ is the indicator function of $A$, and $d_f$ is defined in Section \ref{sec:continuum_eikonal}. Thus, the feasible paths for $d_A(x,\Gamma)$ can travel outside of $A$, as long as they remain inside $\Omega$, but we only measure the length of the segments of the path that lie in $A$. We also define the separation of classes $i$ and $j$ by
\[\cS (\Omega_i,\Omega_j) = \min \{ d_{\tilde{\Omega}}(x,y) \, : \, x\in \Omega_i  \  \ \text{and} \ \ y\in \Omega_j\}.\]
The separation $\cS(\Omega_i,\Omega_j)$ is the length of the shortest path from a point in $\Omega_i$ to a point in $\Omega_j$, where only the distance traveled in $\tilde{\Omega}$ is counted. 

We now define 
\begin{equation}\label{eq:beta_cluster}
\beta_{ij} = \frac{\delta^{\alpha}\cH (\Gamma_j,\Omega_j)}{\rho_j^{\alpha}\cS(\Omega_i,\Omega_j)}.
\end{equation}
We assume $\beta_{ij}>0$ for all $i\neq j$. As we shall see in the results below, our clusterability assumption relates to the smallness of $\beta_{ij}$. This includes measures of how well $\Gamma_j$ covers $\Omega_j$, the ratio of the background density $\delta$ to the class density $\rho_j$, and the separation between classes $i$ and $j$.

\begin{theorem}\label{thm:ssl1}
Let $\alpha \geq 0$. If $\beta_{ij} < 1$ for all $i\neq j$, then the classification \eqref{eq:continuum_label_dec} is asymptotically consistent.
\end{theorem}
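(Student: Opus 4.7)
The strategy is to sandwich the weighted distances in the label rule \eqref{eq:continuum_label_dec}: show that for every $x\in\Omega_j$ one has
\[
d_{\rho^{-\alpha}}(x,\Gamma_j)\;\leq\;\rho_j^{-\alpha}\mathcal{H}(\Gamma_j,\Omega_j)\qquad\text{and}\qquad d_{\rho^{-\alpha}}(x,\Gamma_i)\;\geq\;\delta^{-\alpha}\mathcal{S}(\Omega_i,\Omega_j)
\]
for every $i\neq j$. The hypothesis $\beta_{ij}<1$ is exactly the relation that makes the first bound strictly smaller than the second, so the $\argmin$ in \eqref{eq:continuum_label_dec} is uniquely attained at $j$, which is consistency.

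For the upper bound, the monotonicity $\rho\geq\rho_j$ on $\bar{\Omega_j}$ gives $\rho^{-\alpha}\leq\rho_j^{-\alpha}$ there. Given $x\in\Omega_j$ and $\eta>0$, the definition of $\mathcal{H}(\Gamma_j,\Omega_j)$ furnishes a point $y\in\Gamma_j$ and a path $\gamma$ from $x$ to $y$ lying in $\bar{\Omega_j}$ of Euclidean length at most $\mathcal{H}(\Gamma_j,\Omega_j)+\eta$; evaluating $\int_0^1 \rho(\gamma)^{-\alpha}|\gamma'|\,dt\leq \rho_j^{-\alpha}(\mathcal{H}(\Gamma_j,\Omega_j)+\eta)$ and sending $\eta\to 0$ proves the bound. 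For the lower bound, let $\gamma:[0,1]\to\bar{\Omega}$ be any admissible path from $x\in\Omega_j$ to some $y\in\Gamma_i\subset\Omega_i$. Because the open sets $\{\Omega_\ell\}$ are pairwise disjoint, $\gamma$ must spend positive arclength in $\tilde{\Omega}$; using $\rho\leq\delta$ there, I compute
\[
\int_0^1 \rho(\gamma)^{-\alpha}|\gamma'|\,dt\;\geq\;\delta^{-\alpha}\int_0^1 \one_{\tilde{\Omega}}(\gamma)|\gamma'|\,dt\;\geq\;\delta^{-\alpha}\,d_{\tilde{\Omega}}(x,y),
\]
and taking the infimum over $\gamma$ and then the minimum over $y\in\Gamma_i$ produces the desired lower bound $d_{\rho^{-\alpha}}(x,\Gamma_i)\geq\delta^{-\alpha}\mathcal{S}(\Omega_i,\Omega_j)$.

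Combining the two estimates reduces consistency to $\rho_j^{-\alpha}\mathcal{H}(\Gamma_j,\Omega_j)<\delta^{-\alpha}\mathcal{S}(\Omega_i,\Omega_j)$ for every $i\neq j$, which is exactly $\beta_{ij}<1$ after clearing denominators. The hard part I anticipate is the upper-bound step: the formal definition of $d_{\Omega_j}$ permits a feasible path to leave $\Omega_j$ and pay nothing for the excursion, so extracting from $\mathcal{H}(\Gamma_j,\Omega_j)$ an actual rectifiable path that remains in $\bar{\Omega_j}$ (needed to deploy $\rho\geq\rho_j$) requires either reading $\mathcal{H}$ as an intrinsic geodesic covering radius in $\Omega_j$, or using connectedness of $\Omega_j$ to replace a $d_{\Omega_j}$-near-minimizer by a path of comparable length that stays inside the cluster. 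Everything else is a one-line comparison of weighted arclengths.
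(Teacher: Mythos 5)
Your proof is correct and follows essentially the same route as the paper: the same two sandwich bounds $d_{\rho^{-\alpha}}(x,\Gamma_j)\leq \rho_j^{-\alpha}\cH(\Gamma_j,\Omega_j)$ and $d_{\rho^{-\alpha}}(x,\Gamma_i)\geq \delta^{-\alpha}\cS(\Omega_i,\Omega_j)$, combined through the condition $\beta_{ij}<1$. The subtlety you flag in the upper bound (a $d_{\Omega_j}$-near-minimizer could in principle leave $\Omega_j$, where $\rho\geq\rho_j$ no longer controls the weight) is genuine but is equally implicit in the paper's own one-line estimate, which tacitly reads $\cH(\Gamma_j,\Omega_j)$ as the length of covering paths that remain in the cluster.
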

\begin{proof}
To show that the classification is asymptotically consistent, we need to show that for all $i\neq j$ we have
\begin{equation}\label{eq:needtoshow}
d_{\rho^{-\alpha}}(x,\Gamma_j) < d_{\rho^{-\alpha}}(x,\Gamma_i) \ \ \text{for all } x\in \Omega_j.
\end{equation}
Let $x\in \Omega_j$. Since $\rho \geq \rho_j$ on $\Omega_j$ we have
\[d_{\rho^{-\alpha}}(x,\Gamma_j) \leq \rho_j^{-\alpha} d_\Omega(x,\Omega_j) \leq \rho_j^{-\alpha}\cH(\Gamma_j,\Omega_j).\]
Similarly, since $\rho\leq \delta$ in $\tilde{\Omega}$ we have
\[d_{\rho^{-\alpha}}(x,\Gamma_i) \geq \delta^{-\alpha}\cS(\Omega_i,\Omega_j).\]
Combining these two inequalities, we have that \eqref{eq:needtoshow} holds provided
\[\delta^{-\alpha}\cS(\Omega_i,\Omega_j) >\rho_j^{-\alpha}\cH(\Gamma_j,\Omega_j)\]
for all $i\neq j$. Rearranging we obtain $\beta_{ij}<1$, which completes the proof.
\end{proof}

We now consider the inclusion of class priors. Given positive weights $s_1,\dots,s_k$, the continuum limit of the class priors label decision \eqref{eq:label_dec_priors} is given by
\begin{equation}\label{eq:continuum_label_dec_priors}
\ell(x) = \argmin_{1\leq j\leq k}\{s_jd_{\rho^{-\alpha}}(x,\Gamma_j)\}.
\end{equation}
\begin{theorem}\label{thm:ssl2}
Let $\alpha \geq 0$ and define
\[[\beta]_* = \max_C\left(\prod_{(i,j) \in C} \beta_{ij} \right)^{\frac{1}{|C|}},\]
where the maximum is over all cycles of $\{1,2,\dots,k\}$.  If $[\beta]_* < 1$, then there exists $s\in \R^k_+$ such that the classification \eqref{eq:continuum_label_dec_priors} is asymptotically consistent.
\end{theorem}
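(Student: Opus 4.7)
The plan is to reduce the claim to the feasibility of a system of strict linear inequalities in $\log s_j$, and then solve it by a standard longest-path construction. Following the estimates in the proof of Theorem \ref{thm:ssl1} applied to the weighted label decision \eqref{eq:continuum_label_dec_priors}, the classification is asymptotically consistent provided $s_j \beta_{ij} < s_i$ for every ordered pair $i\neq j$. Setting $t_i := \log s_i$ and $a_{ij} := \log \beta_{ij}$, this becomes the classical system of strict difference constraints $t_i - t_j > a_{ij}$.

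Summing these constraints around any directed cycle $v_1 \to v_2 \to \cdots \to v_m \to v_1$ yields $\sum_{l=1}^m a_{v_l,v_{l+1}} < 0$, i.e.\ $\prod_{l} \beta_{v_l,v_{l+1}} < 1$; thus the cycle condition $[\beta]_* < 1$ is precisely the necessary condition for feasibility, and my plan is to prove it is sufficient. Since there are only finitely many directed cycles in $\{1,\ldots,k\}$, the strict inequality $[\beta]_* < 1$ upgrades to a uniform gap: there exists $\eta > 0$ with $\sum_l a_{v_l,v_{l+1}} \leq -\eta$ for every directed cycle in either orientation.

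With this gap in hand, set
\begin{equation*}
t_i := \max\left\{\,\sum_{l=1}^m a_{j_l,j_{l-1}} \,:\, (j_0,\ldots,j_m) \text{ is a simple directed path with } j_m = i\right\},
\end{equation*}
where the empty path ($m=0$) contributes $0$, so $t_i \geq 0$. The max is taken over finitely many simple paths and is attained. Given $i \neq j$ and a maximizer $P = (j_0,\ldots,j_{m'} = j)$ for $t_j$, either (a) $i \notin P$, in which case $(j_0,\ldots,j_{m'},i)$ is simple and directly gives $t_i \geq t_j + a_{ij}$; or (b) $i = j_l$ for some $l < m'$, in which case truncating $P$ at position $l$ gives a simple path ending at $i$ whose value equals $t_j + a_{ij}$ minus the sum of $a$'s around the cycle $i = j_l \to j_{l+1} \to \cdots \to j_{m'} \to i$. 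Rewritten in terms of $\beta$'s with indices $(\text{to},\text{from})$, this removed sum is exactly the $\beta$-product along the reverse orientation of that cycle; by the uniform gap it is at most $-\eta$, and so $t_i \geq t_j + a_{ij} + \eta$.

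To upgrade case (a) to a strict inequality, I rerun the construction with shifted weights $a_{ij}' := a_{ij} + \eta/(k+1)$. Since every directed cycle has length at most $k$, cycle sums under $a'$ are at most $-\eta + k\cdot\eta/(k+1) = -\eta/(k+1) < 0$, so the same argument produces $\tilde t_i$ with $\tilde t_i \geq \tilde t_j + a_{ij}'$, i.e.\ $\tilde t_i - \tilde t_j > a_{ij}$. Setting $s_i := \exp(\tilde t_i)$ then yields the required weights. The most delicate step is the cycle-orientation bookkeeping in case (b): one needs the reverse of every directed cycle to also be controlled by $[\beta]_*$, which is immediate from the definition since the maximum is taken over \emph{all} cycles of $\{1,\ldots,k\}$.
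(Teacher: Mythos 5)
Your proposal is correct, and after the (shared) first step---reducing asymptotic consistency to the system $s_i^{-1}s_j\beta_{ij}<1$ for all ordered pairs $i\neq j$, exactly as in the proof of Theorem \ref{thm:ssl1}---it diverges from the paper's argument. The paper proves the exact min--max identity of Proposition \ref{prop:betastar}, $[\beta]_* = \min_{s\in\R^k_+}\max_{i\neq j}\{s_i^{-1}s_j\beta_{ij}\}$, by a compactness argument (normalizing $s_1=1$ and trapping the minimizer in a compact set) together with a combinatorial analysis of the tight constraint set $M(s)$ at a minimizer with fewest tight edges, showing it must contain a cycle; the theorem is then immediate. You instead pass to logarithms, recognize $t_i-t_j>a_{ij}$ as a system of strict difference constraints, and build an explicit feasible potential by maximizing over simple directed paths (a Bellman--Ford-type construction), using a uniform cycle gap $\eta$ and the weight shift $a_{ij}'=a_{ij}+\eta/(k+1)$ to convert the non-strict inequality of the ``append a new vertex'' case into a strict one; your orientation bookkeeping in the truncation case is sound, since in the complete directed graph the reverse of any directed cycle is again a cycle covered by $[\beta]_*$. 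What each approach buys: yours is more elementary (no compactness or extremal-minimizer argument) and fully constructive, producing explicit weights $s_i=\exp(\tilde t_i)$, and it also yields the necessity of the cycle condition essentially for free by summing constraints around cycles; the paper's route is less constructive but proves the stronger exact characterization of $[\beta]_*$ as the optimal value of the weighted max, which is what underlies the sharpness discussion in Remark \ref{rem:betastar}, whereas your argument establishes only the feasibility implication actually needed for Theorem \ref{thm:ssl2}.
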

\begin{remark}\label{rem:betastar}
We note that $[\beta]_* \leq \max_{i\neq j}\beta_{ij}$, so Theorem \ref{thm:ssl2} shows that the utilization of class priors leads to a weaker condition for asymptotic consistency. In the case of binary classification, $k=2$, there is only one cycle $C=\{(1,2),(2,1)\}$ and we have
\[[\beta]_* =  \sqrt{\beta_{12}\beta_{21}}.\]
Thus, Theorem \ref{thm:ssl2} shows that the class priors label decision \eqref{eq:continuum_label_dec_priors} with the optimal choice of $s$ is asymptotically consistent for binary classification provided $\beta_{12}\beta_{21}<1$, which allows, for example $\beta_{12}>1$ and $\beta_{21}<1$ (or vice versa). This is a much more relaxed condition compared to the consistency of the label decision \eqref{eq:continuum_label_dec} without class priors, which requires both $\beta_{12}<1$ and $\beta_{21}< 1$. Thus, Theorem \ref{thm:ssl2} shows how class priors are able to correct for poor separation between classes, poor choices of labeled training data, or low density clusters, provided there is another class with good clusterability properties to tradeoff with. 
\end{remark}

The proof of Theorem \ref{thm:ssl2} is based on an alternative characterization of $[\beta]_*$.
\begin{proposition}\label{prop:betastar}
We have
\begin{equation}\label{eq:betastar}
[\beta]_* = \min_{s\in \R^k_+}\max_{i\neq j}  \{s_i^{-1}s_j \beta_{ij}\}.
\end{equation}
\end{proposition}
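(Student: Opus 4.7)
The plan is to pass to logarithms and recognize this as the classical min--max duality between the maximum mean cycle weight of a directed graph and the problem of finding an optimal vertex potential. Set $a_{ij} = \log \beta_{ij}$, $b_i = \log s_i$, and $m = \log [\beta]_*$. Then the identity we want becomes
\begin{equation*}
m \;=\; \max_C \frac{1}{|C|}\sum_{(i,j)\in C} a_{ij} \;=\; \min_{b\in \R^k}\max_{i\neq j}\bigl(b_j - b_i + a_{ij}\bigr),
\end{equation*}
where $C$ ranges over directed cycles in the complete digraph on $\{1,\dots,k\}$. (Note $[\beta]_*>0$ since each $\beta_{ij}>0$, so taking logs is legal.) The two inequalities now correspond to two natural arguments on this digraph.

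For the inequality $[\beta]_* \leq \min_s \max_{i\neq j}\{s_i^{-1}s_j \beta_{ij}\}$, fix any $s\in \R^k_+$ and put $K = \max_{i\neq j}\{s_i^{-1}s_j \beta_{ij}\}$, so $\beta_{ij} \leq K\, s_i s_j^{-1}$ for all $i\neq j$. For any cycle $C = (i_1,i_2,\dots,i_L,i_1)$, multiplying these inequalities around $C$ gives a telescoping product $\prod_{\ell=1}^L s_{i_\ell} s_{i_{\ell+1}}^{-1} = 1$, so $\prod_{(i,j)\in C}\beta_{ij} \leq K^{|C|}$, and taking the $|C|$-th root and the maximum over $C$ yields $[\beta]_* \leq K$.

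For the reverse inequality, I will construct an $s$ that attains the bound $M := [\beta]_*$. Setting $c_{ij} = \log M - a_{ij}$, the condition $s_i^{-1}s_j \beta_{ij} \leq M$ becomes $b_j - b_i \leq c_{ij}$ for all $i\neq j$. By the definition of $M$ as the exponential of the maximum mean cycle weight of $(a_{ij})$, every directed cycle $C$ satisfies $\sum_{(i,j)\in C} c_{ij} = |C|\log M - \sum_{(i,j)\in C} a_{ij} \geq 0$. This non-negative-cycle condition is exactly the classical hypothesis guaranteeing the existence of a feasible potential: fix any reference vertex $v_0$ and set
\begin{equation*}
b_i \;=\; -\min\Bigl\{\textstyle\sum_{(r,s)\in\pi} c_{rs} \,:\, \pi \text{ is a directed path from } v_0 \text{ to } i\Bigr\},
\end{equation*}
which is well-defined and finite because every cycle has non-negative $c$-length (so infimizing paths cannot be driven to $-\infty$). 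A one-step extension of the path realizing $b_i$ shows $b_j \geq b_i - c_{ij}$, i.e.\ $b_j - b_i \leq c_{ij}$, as required. Exponentiating gives the desired $s_i = e^{b_i}$.

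The main obstacle is the second direction, and specifically verifying that the non-negative-cycle condition implies existence of the potential $b$; the telescoping argument in the first direction is entirely routine. Apart from this, the proof is essentially a reformulation of the problem on the log scale.
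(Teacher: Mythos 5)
Your argument is sound and, for the harder direction, genuinely different from the paper's. The easy inequality (telescoping $s_i^{-1}s_j\beta_{ij}\leq K$ around a cycle so that the $s$-factors cancel) is the same in both proofs. For the reverse inequality, the paper argues variationally: it first shows by compactness (after normalizing $s_1=1$) that a minimizer of $F(s)=\max_{i\neq j}s_i^{-1}s_j\beta_{ij}$ exists, then selects a minimizer whose set of \emph{tight} edges $\{(i,j)\,:\,s_i^{-1}s_j\beta_{ij}=F_*\}$ has fewest elements, and shows this set must contain a cycle (otherwise a slight decrease of some $s_j$ contradicts minimality), so the product of $\beta_{ij}$ around that cycle equals $F_*^{|C|}$. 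You instead pass to logarithms and identify the statement as the classical max-mean-cycle / feasible-potential duality, building an explicit optimal $s$ from shortest-path quantities in the complete digraph with costs $c_{ij}=\log[\beta]_*-\log\beta_{ij}$, which sum to a nonnegative amount around every cycle by the definition of $[\beta]_*$. This buys a constructive optimizer (computable by Bellman--Ford) and removes the need for the paper's compactness step, since the constructed $s$ attains the minimum; the paper's proof, in exchange, is self-contained and does not invoke (or re-derive) the difference-constraints feasibility fact. One correction: with your definition $b_i=-\min_\pi\sum_{(r,s)\in\pi}c_{rs}$, the one-step extension yields $b_j\geq b_i-c_{ij}$, which rearranges to $b_i-b_j\leq c_{ij}$, not to $b_j-b_i\leq c_{ij}$ as you claim; you should instead take $b_i=+\min_\pi\sum_{(r,s)\in\pi}c_{rs}$ (the usual shortest-path potential, with $b_{v_0}=0$ for the empty path, finite because cycles have nonnegative $c$-length), or equivalently negate your $b$, so that the relaxation inequality $b_j\leq b_i+c_{ij}$ gives exactly the required constraint. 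With that sign fixed, exponentiating produces $s$ with $\max_{i\neq j}s_i^{-1}s_j\beta_{ij}\leq[\beta]_*$, and your first direction upgrades this to equality and shows the minimum is attained, completing the proof.
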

\begin{proof}
Let us define $F:\R^k_+ \to \R$ by 
\[F(s) = \max_{i\neq j}  \{s_i^{-1}s_j \beta_{ij}\}.\]
We first show that the minimum of $F$ exists.  Since only ratios of $s$ appear, we may restrict to $s$ with $s_1=1$. Set $\beta_{min}=\min_{i\neq j}\beta_{ij}$ and $\beta_{max}=\max_{i\neq j}\beta_{ij}$, and note that $\beta_{min}>0$ by assumption. Then $F(s) \geq \beta_{min}s_j$ for all $j$. Since $\inf F \leq \beta_{max}$, we may also restrict to $s$ such that $\beta_{min}s_j \leq \beta_{max}$, that is $s_j \leq \beta_{max}/\beta_{min}$. Likewise, we have $F(s) \geq \beta_{min}s_i^{-1}$ for all all $i$, so we may restrict to $s$ with $\beta_{min}s_i^{-1} \leq \beta_{max}$, or $s_i \geq \beta_{min}/\beta_{max}$. Thus, we have reduced the problem to minimizing the continuous function $F$ over a compact set, and so the minimum exists.

Let us write $F_* = \min_{s\in \R^k_+} F(s)$. Let $s\in \R^k_+$ be a minimizer of $F$.  Let $C$ be any cycle in the complete graph on $\{1,2,\dots,k\}$. Then since $s_{i}^{-1}s_j \beta_{ij} \leq F_*$ for all $i\neq j$ we have
\[\prod_{(i,j) \in C} s_i^{-1}s_j \beta_{ij} \leq F_*^{|C|}.\]
In the product on the left side, the weights $s_i$ all cancel out, since $C$ is a cycle, and so we have
\begin{equation}\label{eq:lowerbound}
F_* \geq \left(\prod_{(i,j) \in C} \beta_{ij} \right)^{\frac{1}{|C|}}.
\end{equation}
Maximizing over $C$ on the right hand side yields one direction of the proposition, that $F_* \geq [\beta]_*$. 

To prove the other direction, for $s\in \R^k_+$ let us define
\[M(s) = \{ (i,j) \, : \, i\neq j \ \ \text{and} \ \ s_i^{-1}s_j\beta_{ij}=F_*\}.\]
For any minimizer $s$ of $F$, we have $\# M(s)\geq 2$. Indeed, if $M(s)$ contained only one edge $(i,j)$, then we could decrease $s_j$ slightly to decrease $F(s)$, which contradicts the minimality of $s$.  We now select a minimizer $s$ for which $M(s)$ contains the fewest number of edges (this minimizer need not be unique). We claim that $M(s)$ must contain a cycle. To see this, note that if $M(s)$ did not contain a cycle, then there would exist an edge $(i,j)\in M(s)$ such that $(j,k)\not\in M(s)$ for all $k\neq j$.  We can therefore decrease $s_j$ slightly to produce another minimizer $\tilde{s}$ with $2 \leq \# M(\tilde{s}) < \# M(s)$, which contradicts our selection of $s$. Therefore $M(s)$ must contain a cycle.

Let $C$ be a cycle contained in $M(s)$. Then for each $(i,j)\in C$ we have $s_i^{-1}s_j \beta_{ij} = F_*$ and so 
\[\prod_{(i,j)\in C}\beta_{ij} = \prod_{(i,j)\in C} s_i^{-1}s_j \beta_{ij} = F_*^{|C|},\]
which shows that $F_* \leq [\beta]_*$, and completes the proof.
\end{proof}

We now give the proof of Theorem \ref{thm:ssl2}.
\begin{proof}[Proof of Theorem \ref{thm:ssl2}]
To show that the classification is asymptotically consistent, we need to show that there exist weights $s_j$ such that for all $i\neq j$ we have
\begin{equation}\label{eq:needtoshow_priors}
s_jd_{\rho^{-\alpha}}(x,\Gamma_j) < s_id_{\rho^{-\alpha}}(x,\Gamma_i) \ \ \text{for all } x\in \Omega_j.
\end{equation}
Applying the same arguments as in the proof of Theorem \ref{thm:ssl1}, we find that \eqref{eq:needtoshow_priors} is equivalent to $s_{i}^{-1}s_j \beta_{ij} < 1$ for all $i \neq j$. If $[\beta]_*<1$, then such weights exist, by Proposition \ref{prop:betastar}, and the proof is complete.
\end{proof}

\section{Numerical experiments}
\label{sec:numerics}

We present here some numerical experiments with real datasets. All code for the experiments is available online\footnote{\url{https://github.com/jwcalder/peikonal}} and uses the GraphLearning Python package \cite{calder2022graphlearning}. In all experiments we solved the graph $p$-eikonal equation \eqref{eq:graph_peikonal} with the fast marching solver described in Section \ref{sec:comp}, implemented in the C programming language. The rest of this section is broken up into data depth experiments in Section \ref{sec:depth_exp} and semi-supervised learning experiments in Section \ref{sec:ssl_exp}.

\subsection{Data depth}
\label{sec:depth_exp}

We consider the MNIST dataset of handwritten digits \cite{lecun1998gradient} and the FashionMNIST dataset \cite{xiao2017fashion}, which is a drop-in replacement for MNIST consisting of 10 classes of clothing items. Each dataset has 70,000 grayscale images of size $28\times28$ pixels. For both datasets we restricted the computations of data depth to each individual class, which consists of about 7000 datapoints per class. We constructed the graph by connecting each image to its $K$-nearest neighbors with Gaussian weights given by
\begin{equation}\label{eq:weights}
w_{ij} =\exp\left( -\frac{4|x_i-x_j|^2}{d_K(x_i)^2} \right),
\end{equation}
where $x_i$ represents the pixel values for image $i$, and $d_K(x_i)$ is the distance between $x_i$ and its $K^{\rm th}$ nearest neighbor. We used $K=20$ in all experiments. The weight matrix was then symmetrized by replacing $W$ with $W+W^T$. 

We computed the $p$-eikonal median via the definition \eqref{eq:peikonal_median} with $p=1$ and $\alpha=2$. For the density estimator $\hat{\rho}$ we used a $k$-nearest neighbor density estimator with $k=30$. To speed up the computation of \eqref{eq:peikonal_median}, we computed the minimum in \eqref{eq:peikonal_median} over 5\% of the nodes in each class, chosen at random. This takes about 5 minutes to compute for each dataset (30 seconds per class), which includes the time for the $k$-nearest neighbor search.

\begin{figure}[!t]
\centering
\subfloat[Deepest images (median)]{\includegraphics[clip=true,trim=55 40 45 40, width=0.48\textwidth]{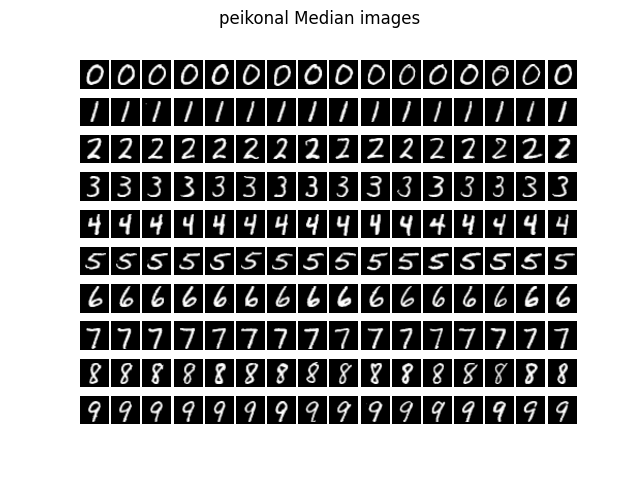}}
\hfill
\subfloat[Shallowest images (outliers)]{\includegraphics[clip=true,trim=55 40 45 40, width=0.48\textwidth]{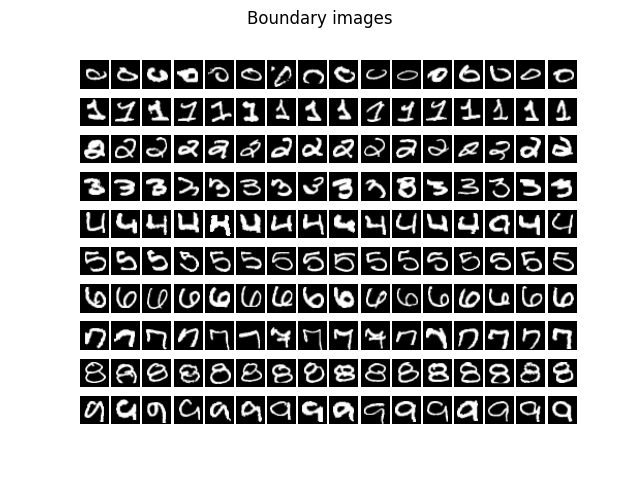}}
\caption{Comparison of deepest (median) images to shallowest (outlier) images from each MNIST digit.}
\label{fig:mnist_depth}
\end{figure}
\begin{figure}[!t]
\centering
\subfloat[Deepest images (median)]{\includegraphics[clip=true,trim=55 40 45 40, width=0.48\textwidth]{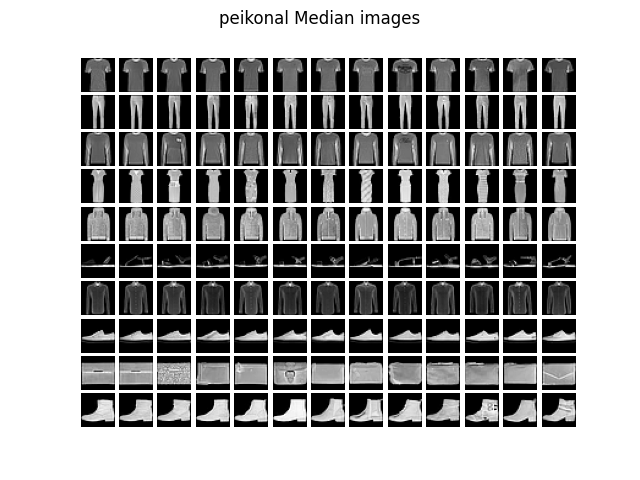}}
\hfill
\subfloat[Shallowest images (outliers)]{\includegraphics[clip=true,trim=55 40 45 40, width=0.48\textwidth]{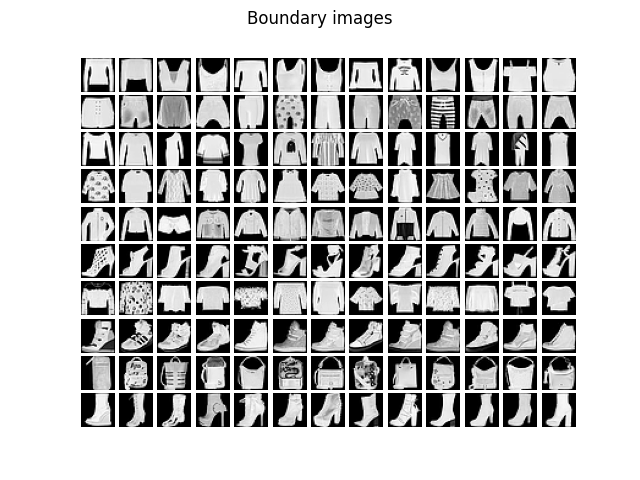}}
\caption{Comparison of deepest (median) images to shallowest (outlier) images from each FashionMNIST class.}
\label{fig:fashionmnist_depth}
\end{figure}
\begin{figure}[!t]
\centering
\subfloat[MNIST]{\includegraphics[clip=true,trim=55 40 45 40, width=0.48\textwidth]{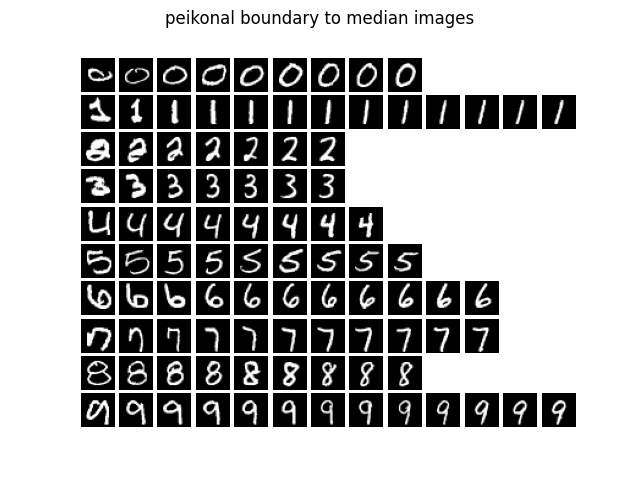}}
\hfill
\subfloat[FashionMNIST]{\includegraphics[clip=true,trim=55 40 45 40, width=0.48\textwidth]{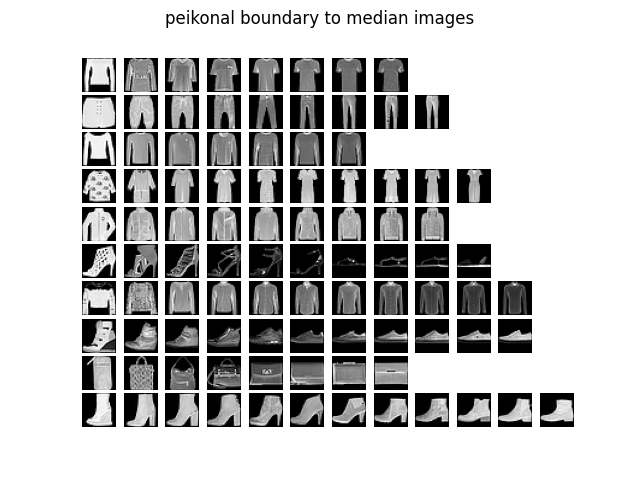}}
\caption{Paths from shallowest point to median for each class computed with the gradient descent method from Section \ref{sec:short}.}
\label{fig:paths}
\end{figure}

In Figures \ref{fig:mnist_depth} and \ref{fig:fashionmnist_depth} we show the deepest images (i.e., the medians) and the shallowest images (i.e., outliers) from each class for the MNIST and FashionMNIST datasets. We can see that the deepest handwritten digits are very clean and self-consistent, while the shallowest do appear visually to be outliers. For FashionMNIST the deepest images are again self-similar and very plain, while the shallowest images tend to be more varied and have patterns on the clothing items.  Finally, in Figure \ref{fig:paths} we show paths through each class from the shallowest point to the deepest point, following the gradient descent path construction from Section \ref{sec:short}.

\subsection{Semi-supervised learning}
\label{sec:ssl_exp}

\begin{figure}[!t]
\centering
\subfloat[MNIST]{\includegraphics[clip=true,trim=10 15 10 10,width=0.48\textwidth]{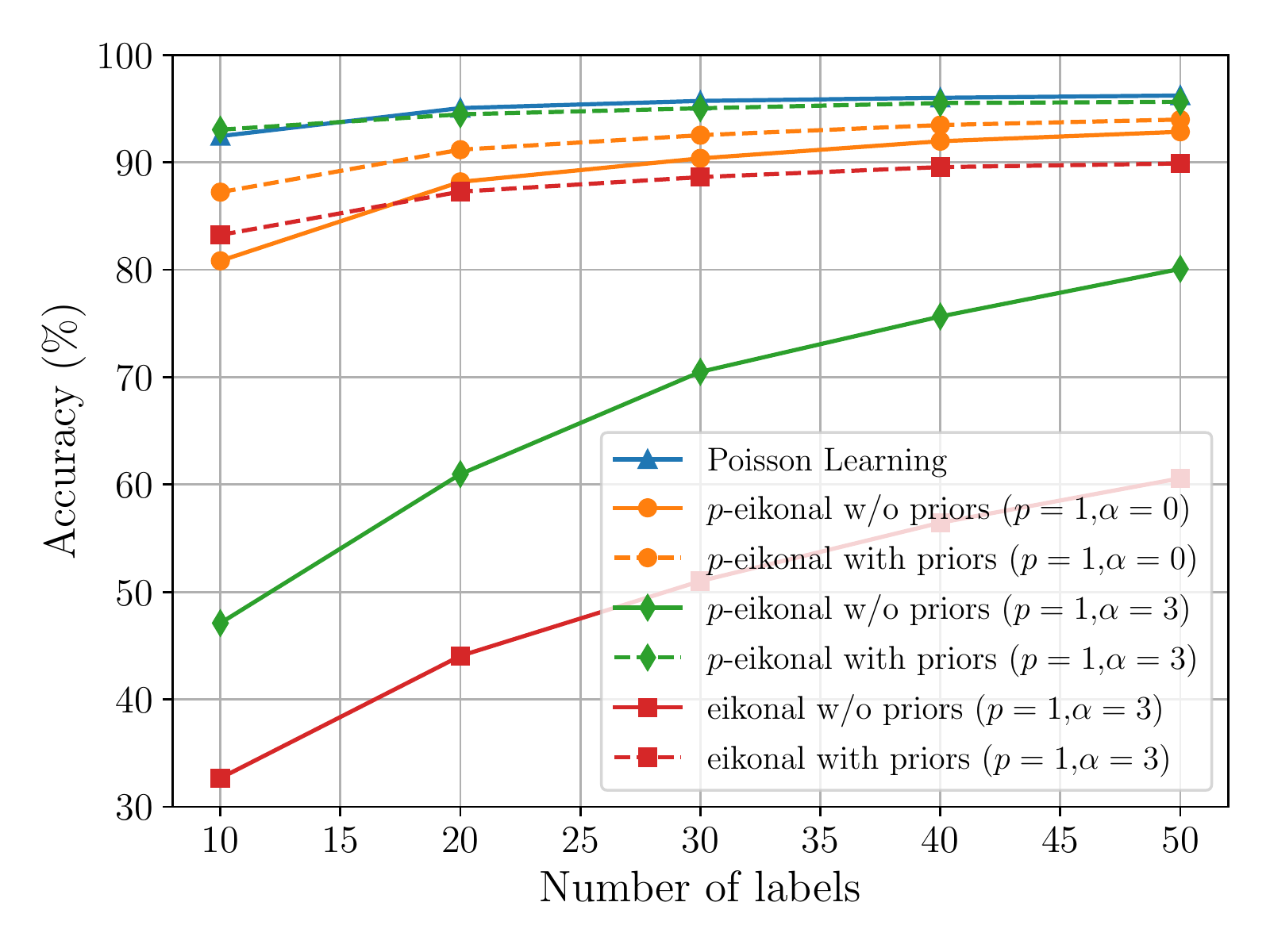}}
\hfill
\subfloat[FashionMNIST]{\includegraphics[clip=true,trim=7 15 7 10,width=0.48\textwidth]{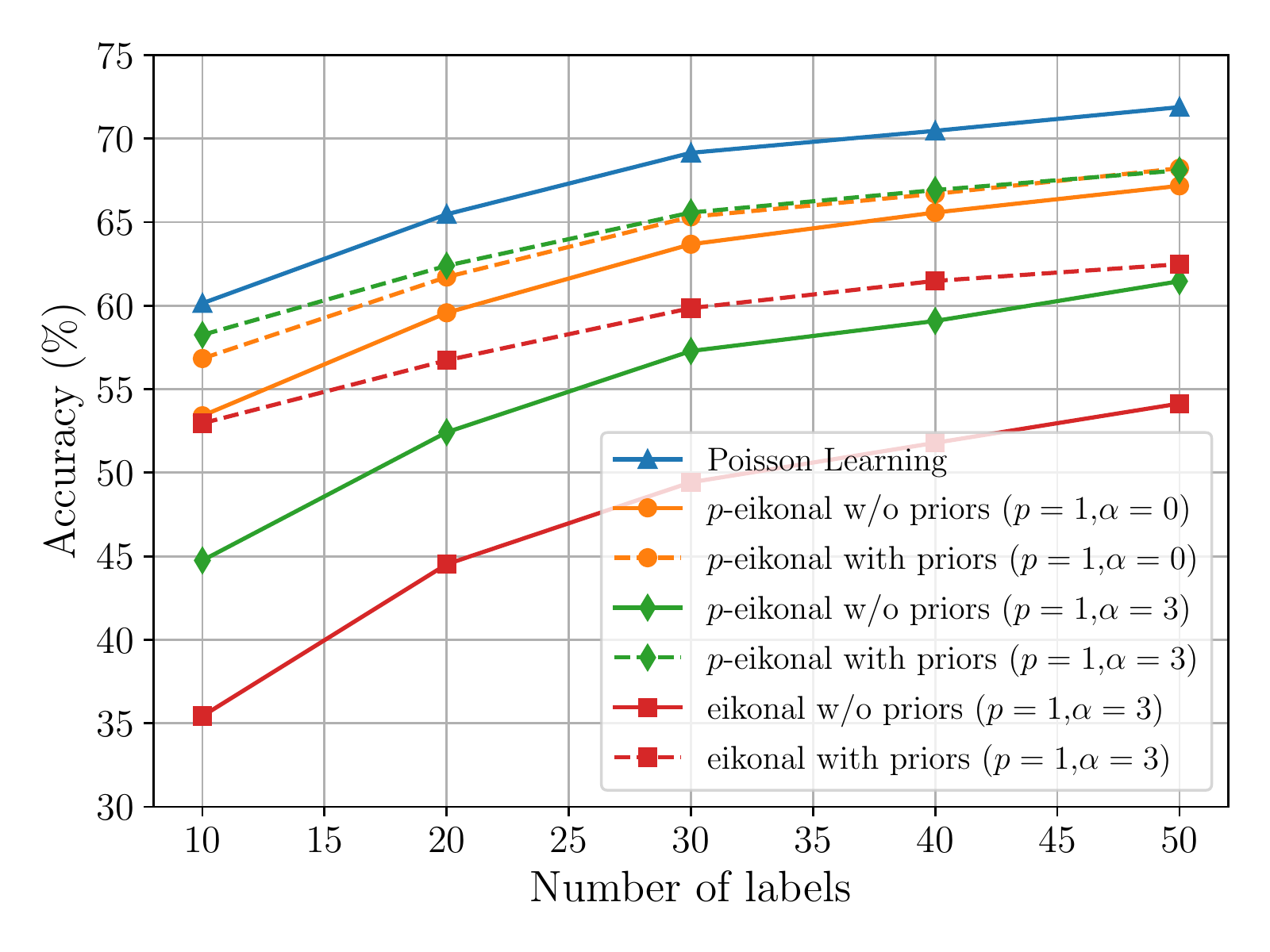}}
\caption{Comparison of the $p$-eikonal equation with $p=1$ for semi-supervised image classification to Poisson learning \cite{calder2020poisson} and the eikonal equation \eqref{eq:density_eikonal}.}
\label{fig:ssl_mnist}
\end{figure}

\begin{figure}[!t]
\centering
\subfloat[CIFAR-10]{\includegraphics[clip=true,trim=10 15 10 10,width=0.48\textwidth]{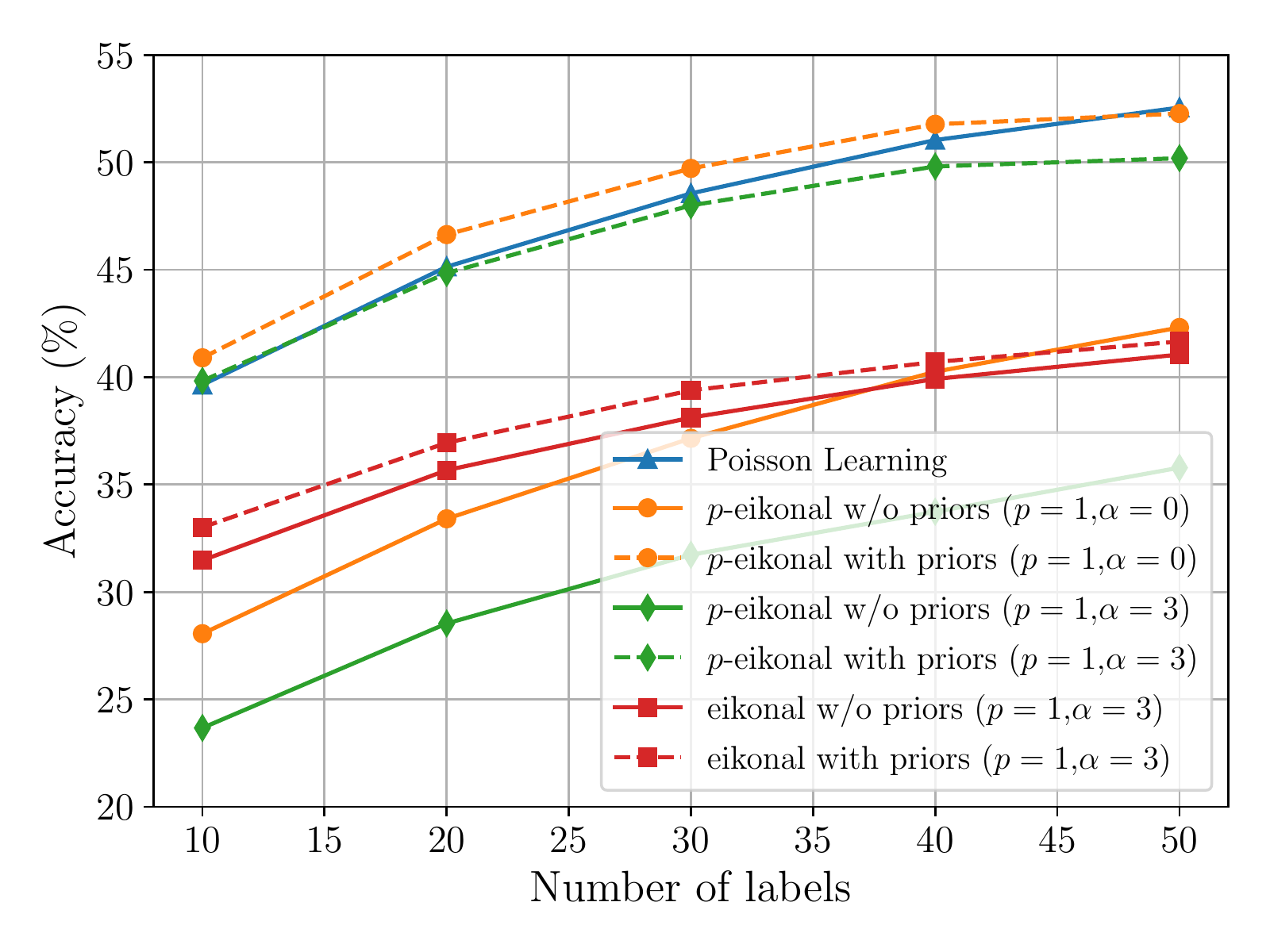}\label{fig:ssl_cifar}}
\hfill
\subfloat[Accuracy vs $\alpha$]{\includegraphics[clip=true,trim=7 15 7 10,width=0.48\textwidth]{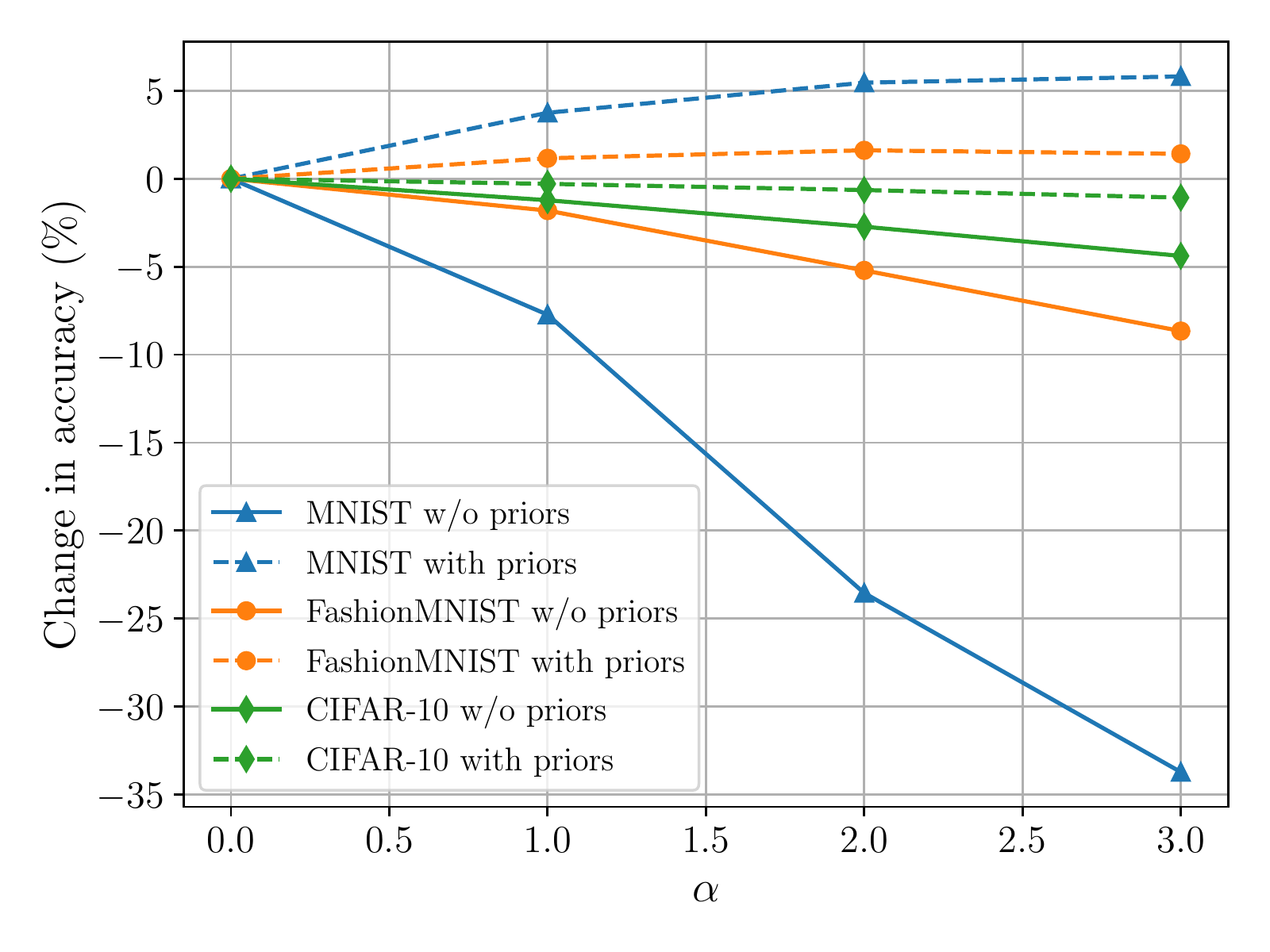}\label{fig:ssl_alphacomp}}
\caption{(a) Accuracy results for the $p$-eikonal equation with $p=1$ for semi-supervised image classification on CIFAR-10, and (b) change in accuracy as the density reweighting exponent $\alpha$ is adjusted.}
\end{figure}

\begin{figure}[!t]
\centering
\subfloat[Accuracy vs $p$]{\includegraphics[clip=true,trim=10 15 10 10,width=0.48\textwidth]{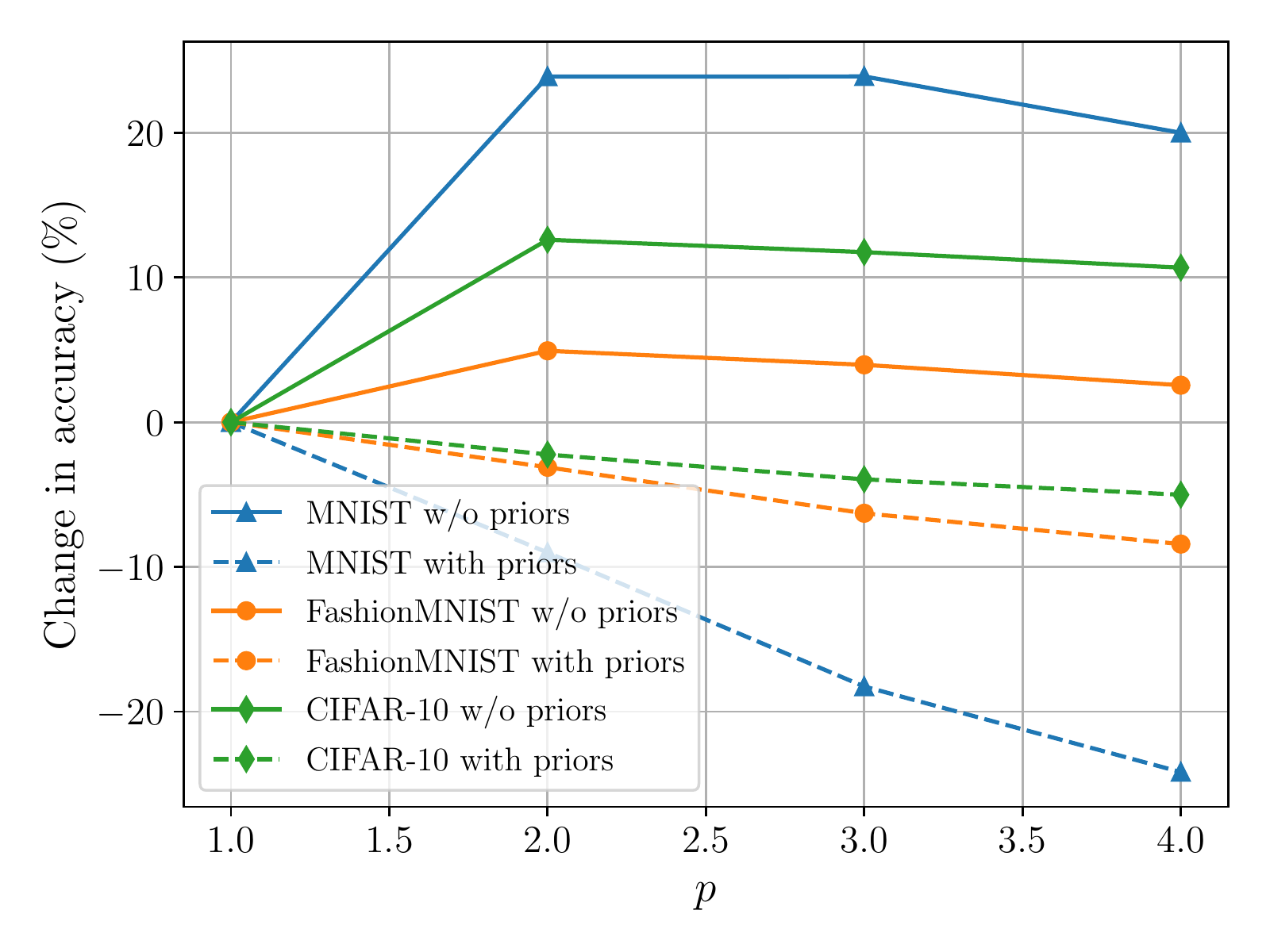}\label{fig:ssl_pcomp}}
\hfill
\subfloat[$p=2,\alpha=3$]{\includegraphics[clip=true,trim=7 15 7 10,width=0.48\textwidth]{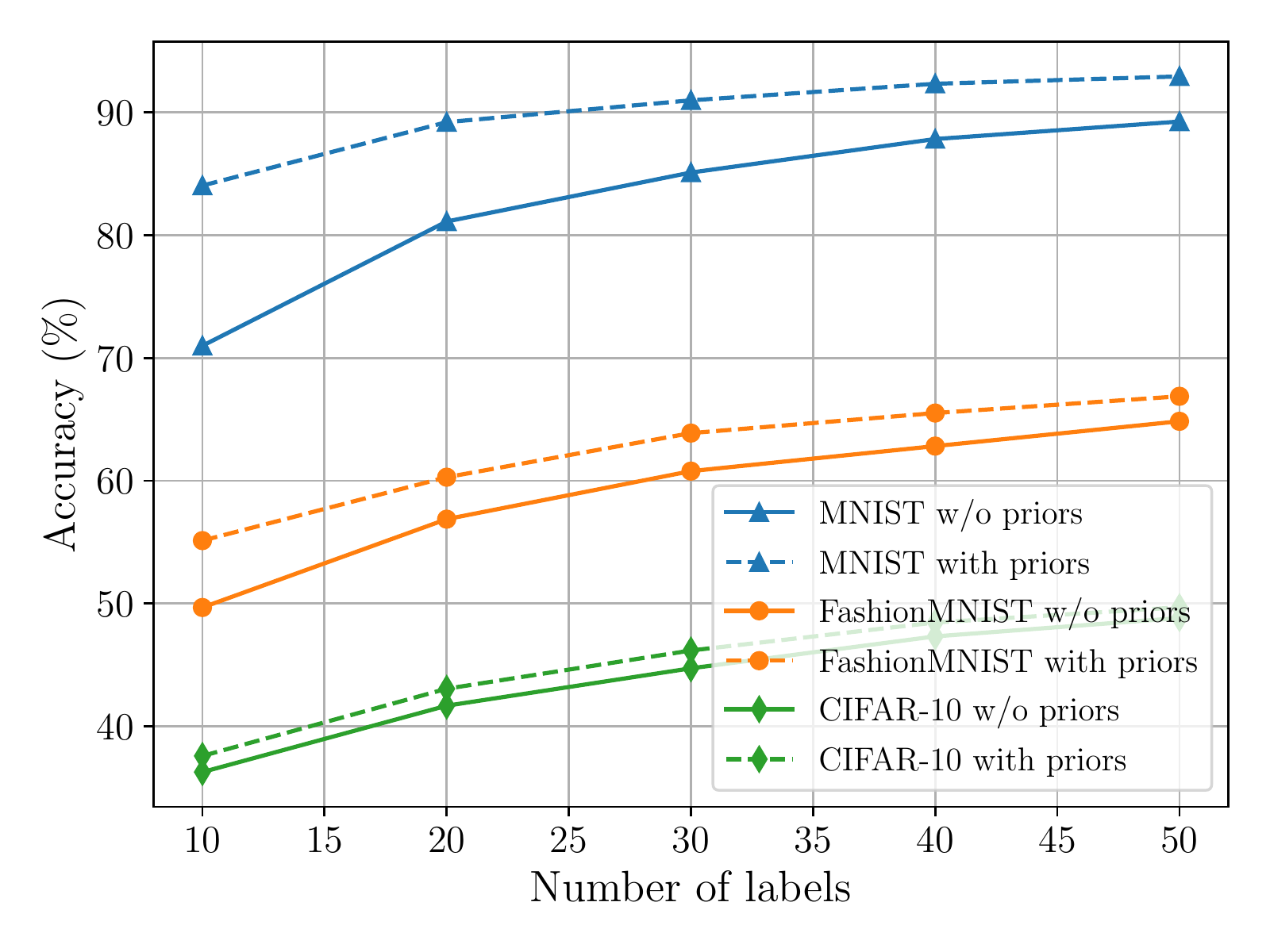}\label{fig:ssl_p2}}
\caption{Comparison of how the classification accuracy depends on the exponent $p$ in the $p$-eikonal equation. In both experiments we used density weighting with $\alpha=3$. }
\end{figure}

We tested the $p$-eikonal equation for semi-supervised learning at very low label rates with $p=1$. In addition to MNIST and FashionMNIST, we also tested on CIFAR-10 \cite{krizhevsky2009learning}. To build good quality graphs for classification, we cannot use the pixel-wise differences that we did for data depth in Section \ref{sec:depth_exp}. Instead we follow the methods from \cite{calder2020poisson} and trained autoencoders to extract important features from the data.  For MNIST and FashionMNIST, we used variational autoencoders, similar to \cite{kingma2013bayes}, while for CIFAR-10 we used the AutoEncodingTransformations architecture from \cite{zhang2019aet}. After training the autoencoders we built $K$-nearest neighbor graphs with weights given by \eqref{eq:weights} over the latent variables using the angular similarity with $K=20$ neighbors. We again used a $k$-nearest neighbor density estimator with $k=30$ neighbors for the reweighting. We refer to \cite{calder2020poisson} for more details about the autoencoder graph construction, which was also used successfully in another recent work \cite{miller2022graphbased}. After the graphs have been constructed, the classification results on any of the 3 datasets, which requires solving $10$ $p$-eikonal equations, takes a few seconds to run the classification for each trial.

We ran 100 trials at $1$ label per class up to $5$ labels per class, randomly choosing different labeled data for each trial. We compared against Poisson learning \cite{calder2020poisson} and the graph distance eikonal equation \eqref{eq:density_eikonal} with the same density reweighting schemes. We tested the $p$-eikonal and eikonal equations with and without class priors, as described in Section \ref{sec:ssl_discrete}. Figure \ref{fig:ssl_mnist} shows the results for MNIST and FashionMNIST, while Figure \ref{fig:ssl_cifar} shows the results on CIFAR-10. We see that with class priors, $p$-eikonal learning is comparable to Poisson learning on MNIST, slightly worse on FashionMNIST and slightly better on CIFAR-10. We also see that $p$-eikonal offers a significant improvement over the shortest path based eikonal classifier, even though we applied the same density reweighting to both. Since the asymptotic consistency results from Section \ref{sec:asymptotic_consistency} would hold equally well for the density reweighted eikonal equation, we attribute the improved results to the robustness properties of the $p$-eikonal equation (see Theorem \ref{thm:robust}) to perturbations in graphs, which are common in real data.

In Figure \ref{fig:ssl_alphacomp} we show how the accuracy changes for each dataset as the density exponent is increased. We find a quite surprising result here; without class priors the accuracy actually \emph{decreases} when density reweighting is used. It is only with the addition of class priors that the density reweighting can increase the accuracy of the classifier. This is true across all datasets and validates our theoretical findings in Theorem \ref{thm:ssl2} that class priors can effectively make use of density reweighting to improve classification results.

Finally, in Figure \ref{fig:ssl_pcomp} we show how the accuracy changes as the exponent $p$ in the $p$-eikonal equation is changed (here, $\alpha=3$). All our previous experiments were with $p=1$, and we find another surprising result here; the classification accuracy improves up to $p=2$ without class priors, but is monotonically decreasing when utilizing class priors. This may simply be due to the fact that the classification accuracy is already very high with class priors, and very low without. Indeed, in Figure \ref{fig:ssl_p2} we show the accuracy for $p=2$ and $\alpha=3$ for each dataset, and both with and without class priors. We see that even though $p=2$ is better for the classifiers without class priors, the incorporation of class priors still improves the accuracy significantly.

\section{Conclusion}

We introduced and studied a family of graph-based distance-type equations called the $p$-eikonal equation. We showed that the $p$-eikonal equation for $p=1$ is a robust estimator of the geodesic density weighted path distance on the underlying Euclidean space, compared to the standard shortest-path graph distance. We proved that, while the $p$-eikonal equation is not a distance function on a graph, it has similar properties and its continuum limit recovers the geodesic density weighted distance on the underlying Euclidean space, with quantitative convergence rates. We used the continuum limit theory to prove asymptotic consistency of data depth and semi-supervised learning with the $p$-eikonal equation and then gave some experiments with real data on the MNIST, FashionMNIST, and CIFAR-10 datasets.

\section{Acknowledgments}

The authors thank the Institute for Mathematics and its Applications (IMA), where part of this work was conducted.
JC acknowledges funding from NSF grant DMS:1944925, the Alfred P. Sloan foundation, and a McKnight Presidential Fellowship.

\bibliographystyle{abbrv}
\bibliography{ref}

\appendix
\section{Concentration of measure}
\label{sec:conc}

We recall here some useful concentration of measure results.
\begin{theorem}(Bernstein Inequality)
	Let $x_1, x_2, \ldots, x_n$ be a sequence of $i.i.d$ real-valued random variables with finite expectation $\mu = \E(x_i)$ and variance $\sigma^2 = \text{Var}(x_i)$, and write $S_n := \frac{1}{n} \sum_{i=1}^{n} x_i$. Assume there exists $b > 0$ such that $|x-\mu|\leq b$ almost surely. Then for any $t > 0$ we have
	\begin{equation}
		\P(S_n - \mu \geq t) \leq \exp\bigg(-\frac{nt^2}{2(\sigma^2+\frac{bt}{3})}\bigg)
	\end{equation}
\end{theorem}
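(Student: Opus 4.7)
The plan is to prove this via the standard Chernoff bounding method combined with a Taylor expansion of the moment generating function (MGF) of a centered bounded random variable.

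First I would center the variables, setting $Y_i := x_i - \mu$ so that $\E Y_i = 0$, $\text{Var}(Y_i) = \sigma^2$, and $|Y_i|\leq b$ almost surely. For any $\lambda > 0$, applying Markov's inequality to $\exp(\lambda \sum_{i=1}^n Y_i)$ and using independence gives
\[\P(S_n - \mu \geq t) \leq e^{-n\lambda t}\,\bigl(\E[e^{\lambda Y_1}]\bigr)^n.\]

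The key step is bounding the MGF $\E[e^{\lambda Y_1}]$. I would Taylor expand the exponential and use the moment inequality $\E[Y_1^k] \leq b^{k-2}\sigma^2$ for $k \geq 2$, which follows from the pointwise bound $|Y_1|^k \leq b^{k-2} Y_1^2$. Together with the factorial estimate $k! \geq 2\cdot 3^{k-2}$ (valid for $k\geq 2$), the resulting series sums to $\lambda^2\sigma^2/(2(1-b\lambda/3))$ whenever $\lambda \in (0, 3/b)$. Using $1+x\leq e^x$ then yields
\[\E[e^{\lambda Y_1}] \leq \exp\!\left(\frac{\lambda^2\sigma^2/2}{1 - b\lambda/3}\right).\]

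Combining the two inequalities above, I would obtain
\[\P(S_n - \mu \geq t) \leq \exp\!\left(-n\lambda t + \frac{n\lambda^2 \sigma^2/2}{1 - b\lambda/3}\right),\]
valid for all $\lambda \in (0, 3/b)$. The final step is to optimize the exponent over $\lambda$. Choosing $\lambda = t/(\sigma^2 + bt/3)$ (which automatically satisfies $b\lambda/3 < 1$ whenever $t>0$) collapses the exponent, after routine algebra, to $-nt^2/(2(\sigma^2 + bt/3))$, matching the stated inequality. The only mildly delicate point is the MGF bound; the Chernoff setup and the scalar optimization are entirely routine, and indeed the result as stated is standard (found, e.g., in Boucheron--Lugosi--Massart), so the paper merely quotes it for the concentration arguments used throughout Section~\ref{sec:convergence}.
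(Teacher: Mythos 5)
Your proposal is correct: the Chernoff bounding step, the MGF estimate via $\E[Y_1^k]\leq b^{k-2}\sigma^2$ together with $k!\geq 2\cdot 3^{k-2}$ (summing to $\frac{\lambda^2\sigma^2/2}{1-b\lambda/3}$ for $\lambda\in(0,3/b)$), and the choice $\lambda = t/(\sigma^2+bt/3)$ do indeed collapse the exponent to $-\frac{nt^2}{2(\sigma^2+bt/3)}$, exactly matching the stated bound. There is no paper proof to compare against — Bernstein's inequality is simply quoted without proof in Appendix \ref{sec:conc} as a standard concentration result used in Section \ref{sec:convergence} — so your argument stands on its own as the classical proof of the quoted statement.
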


\begin{theorem}(Chernoff bounds)
	Let $x_1, x_2, \ldots, x_n$ be a sequence of $i.i.d$ Bernoulli random variables with parameter $p \in [0,1]$. Then for any $\delta > 0$ we have
	\begin{equation}
		\P\bigg(\sum_{i=1}^{n} x_i \geq (1+\delta)np\bigg) \leq \exp\bigg(-\frac{np\delta^2}{2(1+\frac{1}{3} \delta)}\bigg)
	\end{equation}
	and for any $0 \leq \delta < 1$ we have
	\begin{equation}
		\P\bigg(\sum_{i=1}^{n} x_i \leq (1-\delta)np\bigg) \leq \exp\bigg(-\frac{1}{2}np\delta^2\bigg)
	\end{equation}
\end{theorem}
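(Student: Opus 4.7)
The plan is to apply the classical Chernoff (exponential Markov) trick and optimize over the dual parameter. For the upper tail, I would fix $t>0$ and combine Markov's inequality with independence to get
\[\P\!\left(\sum_{i=1}^n x_i \geq (1+\delta)np\right) \leq e^{-t(1+\delta)np}\prod_{i=1}^n \E[e^{tx_i}].\]
Each Bernoulli factor gives $\E[e^{tx_i}] = 1 + p(e^t-1) \leq \exp(p(e^t-1))$ from $1+u\leq e^u$, so the right side is bounded by $\exp(np[e^t - 1 - t(1+\delta)])$. Minimizing the exponent in $t$ at $t^\star = \log(1+\delta)$ yields the classical Chernoff form
\[\P\!\left(\sum_{i=1}^n x_i \geq (1+\delta)np\right) \leq \exp\bigl(np[\delta - (1+\delta)\log(1+\delta)]\bigr).\]

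For the lower tail the argument is identical with the sign of $t$ flipped: for $t>0$, applying Markov to $e^{-t\sum x_i}$ together with $\E[e^{-tx_i}]\leq \exp(p(e^{-t}-1))$ produces the bound $\exp(np[e^{-t}-1+t(1-\delta)])$, and the optimal choice $t^\star=-\log(1-\delta)$ (valid since $\delta<1$) gives $\exp(np[-\delta - (1-\delta)\log(1-\delta)])$.

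It remains to convert these raw Chernoff exponents into the cleaner forms stated in the theorem. The main obstacle is the upper-tail inequality
\[\delta - (1+\delta)\log(1+\delta) \leq -\frac{\delta^2}{2(1+\delta/3)} \qquad (\delta>0),\]
since the right side must track the left both near $\delta=0$ (where a Taylor expansion gives $-\delta^2/2 + \delta^3/6 + O(\delta^4)$, explaining the $1+\delta/3$ correction) and for large $\delta$ (where the logarithm dominates). I would define $g(\delta) := -\delta + (1+\delta)\log(1+\delta) - \frac{\delta^2}{2(1+\delta/3)}$, verify $g(0)=0$, and show $g'(\delta)\geq 0$ on $(0,\infty)$; after clearing denominators this reduces to a polynomial-versus-logarithm comparison that succumbs to one more differentiation. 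The lower-tail inequality $-\delta - (1-\delta)\log(1-\delta) \leq -\delta^2/2$ is far cleaner: expanding $\log(1-\delta) = -\sum_{k\geq 1}\delta^k/k$ and regrouping yields the identity
\[-\delta - (1-\delta)\log(1-\delta) = -\sum_{k=2}^\infty \frac{\delta^k}{k(k-1)},\]
whose $k=2$ term already supplies $-\delta^2/2$ and whose higher-order terms have the correct sign on $[0,1)$.
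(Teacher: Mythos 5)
Your proposal is correct, but there is nothing in the paper to compare it against: the Chernoff bounds are only \emph{recalled} in Appendix \ref{sec:conc} as standard concentration-of-measure results and are stated there without proof. Your argument is the classical one — exponential Markov plus the MGF bound $\E[e^{tx_i}] = 1+p(e^t-1) \leq \exp\bigl(p(e^t-1)\bigr)$, optimized at $t^\star=\log(1+\delta)$ for the upper tail and $t^\star=-\log(1-\delta)$ for the lower tail — and both conversion steps you outline do close. For the upper tail, setting $g(\delta)=(1+\delta)\log(1+\delta)-\delta-\frac{3\delta^2}{2(3+\delta)}$ one gets $g(0)=g'(0)=0$ and $g''(\delta)=\frac{1}{1+\delta}-\frac{27}{(3+\delta)^3}\geq 0$, because $(3+\delta)^3-27(1+\delta)=9\delta^2+\delta^3\geq 0$; so the ``one more differentiation'' you anticipate is exactly what finishes the polynomial-versus-logarithm comparison. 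For the lower tail, your series identity $-\delta-(1-\delta)\log(1-\delta)=-\sum_{k\geq 2}\frac{\delta^k}{k(k-1)}$ is correct (expand $\log(1-\delta)$ and reindex), the $k=2$ term supplies $-\delta^2/2$, the remaining terms are nonpositive on $[0,1)$, and the boundary case $\delta=0$ is trivial. So the proof is complete and is the standard route one would expect the authors to have in mind when citing these bounds.
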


\section{Technical proofs}
\label{sec:proofs}

We include here some technical, but elementary, proofs from the paper.
\begin{proof}[Proof of Theorem \ref{thm:existence}]
Since $H$ admits comparison, there is at most one solution of \eqref{eq:genPDE}, so we only have to establish existence. We use the Perron method. Let $\F$ be the set of all $v\in \lx$ such that
\[H(\nabla_\X v(x_i),v(x_i),x_i)\leq 0 \ \ \text{ for all }x_i\in \X\setminus \Gamma\] 
and $v=g$ on $\Gamma$. The set $\F$ is nonempty, since $\phi\in \F$. Define the Perron function
\[u(x_i) = \sup \left\{ v(x_i) \, : \, v\in \F \right\}.\]
Since $H$ admits comparison, we have $v\leq \psi$ for all $v\in \F$, and so $\phi \leq u\leq \psi$.

We now claim that 
\[H(\nabla_\X u(x_i),u(x_i),x_i)\leq 0 \ \ \text{ for all }x_i \in \X\setminus \Gamma.\]
To see this, let $x_i\in \X\setminus \Gamma$ and let $\eps>0$. There exist $v\in \F$ such that $u(x_i) \leq v(x_i) - \eps$. By definition we have $u(x_j)\geq v(x_j)$ for all $j$, and so $\nabla_\X u(x_i) + \eps \one \leq \nabla_\X v(x_i)$. Therefore
\[0 \geq H(\nabla_\X v(x_i),v(x_i),x_i)\geq H(\nabla_\X u(x_i) + \eps \one,u(x_i) + \eps,x_i).\]
Sending $\eps\to 0$ and using continuity of $H$ establishes the claim.

We now claim that
\[H(\nabla_\X u(x_i),u(x_i),x_i)\geq 0 \ \ \text{ for all }x_i \in \X\setminus \Gamma,\]
which will complete the proof.  Assume, by way of contradiction, that 
\[H(\nabla_\X u(x_i),u(x_i),x_i)< 0 \ \ \text{ for some }x_i \in \X\setminus \Gamma.\]
Let $\eps>0$ and define $u_\eps\in \lx$ by $u_\eps(x_j)=u(x_j)$ for $j\neq i$, and $u_\eps(x_i)=u(x_i) + \eps$. By continuity of $H$, there is a sufficiently small $\eps>0$ so that
\[H(\nabla_\X u_\eps(x_i),u_\eps(x_i),x_i)\leq 0.\]
Furthermore, for any $j\neq i$, we have $u_\eps(x_j)=u(x_j)$ and $\nabla_\X u_\eps(x_j) \leq \nabla_\X u(x_j)$. Since $H$ is monotone we find that
\[H(\nabla_\X u_\eps(x_j),u_\eps(x_j),x_j)\leq H(\nabla_\X u(x_j),u(x_j),x_j)\leq 0\] 
for $j\neq i$ with $x_j \in \X\setminus \Gamma$. Therefore $u_\eps\in \F$, which is a contradiction (since $u_\eps(x_i)>u(x_i)$), establishing the claim and completing the proof.
\end{proof}

\begin{proof}[Proof of Theorem \ref{thm:existence_state_const}]
The proof is uses the following dynamic programming principle
\[u(x) = \min_{y\in \partial B(x,r)\cap \bar{\Omega}} \{ u(y) + d_f(x,y)\},\]
which holds provided $B(x,r)\subset \Omega\setminus \Gamma$ and is immediate to verify. Rearranging the dynamic programming principle we obtain
\begin{equation}\label{eq:dpp_Lf}
\max_{y\in \partial B(x,r)\cap \bar{\Omega}} \{ u(x) - u(y) - d_f(x,y)\} = 0.
\end{equation}
Since $f$ is Lipschitz continuous, we have
\[d_f(x,y) = f(x)|x-y| + \cO(|x-y|^2),\]
which, when substituted above, yields
\begin{equation}\label{eq:dpp_Lf_conseq}
\max_{y\in \partial B(x,r)\cap \bar{\Omega}} \left\{ \frac{u(x) - u(y)}{r}\right\} = f(x) + \cO(r).
\end{equation}

We now prove the subsolution property. Let $x\in \Omega\setminus \Gamma$ and let $\phi\in C^\infty(\R^d)$ such that $u-\phi$ has a local maximum at $x$. Then for $r>0$ sufficiently small we have $B(x,r)\subset \Omega\setminus \Gamma$ and
\[u(x) - \phi(x) \geq u(y) - \phi(y) \ \ \text{for all } y\in B(x,r).\]
Rearranging we have
\[u(x) - u(y) \geq \phi(x) - \phi(y) \ \ \text{for all } y\in B(x,r).\]
Plugging this into \eqref{eq:dpp_Lf_conseq} yields
\[\max_{y\in \partial B(x,r)} \left\{ \frac{\phi(x) - \phi(y)}{r}\right\} \leq f(x) + \cO(r).\]
Notice the maximum is over only $\partial B(x,r)$, since $B(x,r)\subset \Omega\setminus\Gamma$. Sending $r\to 0$ yields $|\nabla \phi(x)| \leq f(x)$, which is exactly the subsolution property.

To prove the supersolution property, let $x\in \bar{\Omega}\setminus \Gamma$ and let $\phi\in C^\infty(\R^d)$ such that $u-\phi$ has a local minimum at $x$. As above, this means that
\[u(x) - u(y) \leq \phi(x) - \phi(y) \ \ \text{for all } y\in B(x,r)\cap \bar{\Omega}.\]
For $r>0$ small enough $B(x,r)\subset \bar{\Omega} \setminus \Gamma$, and so we can substitute this into \eqref{eq:dpp_Lf_conseq} to obtain
\[\max_{y\in \partial B(x,r)\cap \bar{\Omega}} \left\{ \frac{\phi(x) - \phi(y)}{r}\right\} \geq f(x) + \cO(r).\]
By enlarging the domain in the maximum above, we obtain
\[\max_{y\in \partial B(x,r)} \left\{ \frac{\phi(x) - \phi(y)}{r}\right\} \geq f(x) + \cO(r).\]
We now send $r\to 0$ to obtain $|\nabla \phi(x)| \geq f(x)$, which completes the proof.
\end{proof}
\begin{proof}[Proof of Proposition \ref{prop:ball_measure}]
We note that the inequality \eqref{eq:geodesic_euclidean} can be restated as
\begin{equation}\label{eq:geodesic_inclusion}
B(x,r) \subset B_\Omega(x,r + Cr^2) \ \ \text{and} \ \ B_\Omega(x,r) \subset B(x,r).
\end{equation}
For $r>0$ sufficiently small, so that $Cr\leq \frac{1}{2}$, the first inclusion above implies that
\begin{equation}\label{eq:geodesic_inclusion2}
B_\Omega(x,r) \supset B(x,r-Cr^2) \supset B(x,\tfrac{r}{2}).
\end{equation}
Since the boundary $\partial\Omega$ is $C^{1,1}$, there exists $v\in \R^d$ with $|v|=1$ and $c>0$ such that
\[B(x,\tfrac{r}{2})\cap \Omega \supset \{y\in  B(x,\tfrac{r}{2}) \, : \, (y-x)\cdot v \geq cr^2\}.\]
For $r$ smaller, so that $cr\leq \frac{1}{4}$ as well, we have
\begin{align*}
|B_\Omega(x,r)\cap \Omega| &\geq |B(x,\tfrac{r}{2})\cap \Omega | \\
&\geq |\{y\in B(x,\tfrac{r}{2}) \, : \, (y-x)\cdot v \geq \tfrac{r}{4}\}|\\
&=\left( \frac{r}{2}\right)^d| \{z\in B(0,1) \, : \, z\cdot v \geq \tfrac{1}{2}\}|\\
&=c_dr^d
\end{align*}
where
\[c_d := \frac{1}{2^d}\int_{B(0,1)\cap \{z_1\geq \tfrac{1}{2}\}} \, dx.\]
We finally compute
\begin{align*}
c_d &= \frac{1}{2^d}\int_{\frac{1}{2}}^1 \omega_{d-1}(1-z_1^2)^{\frac{d-1}{2}}\, dx\\
&\geq \frac{\omega_{d-1}}{2^d}\int_{\frac{1}{2}}^1 z_1(1-z_1^2)^{\frac{d-1}{2}}\, dx\\
&= -\frac{\omega_{d-1}}{2^d(d+1)}(1-z_1^2)^{\frac{d+1}{2}}\Big\vert_{\frac{1}{2}}^1\\
&=\frac{\omega_{d-1}}{2^d(d+1)}\left(\frac{3}{4}\right)^{\frac{d+1}{2}}.
\end{align*}
Applying the lower bound $\tfrac34\geq \tfrac12$ above to simplify the constant completes the proof.
\end{proof}

%

\end{document}